\crefname{construction}{Construction}{Constructions}
\newcommand{\bmk}{\mathbbm{k}}
\newcommand{\cB}{\mathcal{B}}
\newcommand{\cC}{\mathcal{C}}
\newcommand{\cE}{\mathcal{E}}
\newcommand{\cF}{\mathcal{F}}
\newcommand{\cO}{\mathcal{O}}
\newcommand{\cR}{\mathcal{R}}
\newcommand{\cS}{\mathcal{S}}
\newcommand{\cT}{\mathcal{T}}
\newcommand{\cU}{\mathcal{U}}
\newcommand{\fM}{\mathfrak{M}}
\newcommand{\fu}{\mathfrak{u}}
\newcommand{\fv}{\mathfrak{v}}
\newcommand{\lra}{\longrightarrow}
\newcommand{\NN}{\mathbb{N}}
\newcommand{\QQ}{\mathbb{Q}}
\newcommand{\RR}{\mathbb{R}}
\newcommand{\ZZ}{\mathbb{Z}}
\DeclareMathOperator{\add}{add}
\DeclareMathOperator{\Aut}{Aut}
\DeclareMathOperator{\bd}{\mathbf{d}}
\DeclareMathOperator{\be}{\mathbf{e}}
\DeclareMathOperator{\bff}{\mathbf{f}}
\DeclareMathOperator{\Coh}{Coh}
\DeclareMathOperator{\dimu}{\underline{dim}}
\DeclareMathOperator{\End}{End}
\DeclareMathOperator{\esp}{esp}
\DeclareMathOperator{\Ext}{Ext}
\DeclareMathOperator{\GL}{GL}
\DeclareMathOperator{\Gr}{Gr}
\DeclareMathOperator{\hlra}{\hookrightarrow}
\DeclareMathOperator{\Hom}{Hom}
\DeclareMathOperator{\id}{id}
\DeclareMathOperator{\im}{im}
\DeclareMathOperator{\inj}{inj}
\DeclareMathOperator{\modd}{mod}
\DeclareMathOperator{\proj}{proj}
\DeclareMathOperator{\reg}{reg}
\DeclareMathOperator{\rep}{rep}
\DeclareMathOperator{\rk}{rk}
\DeclareMathOperator{\StVect}{StVect}
\DeclareMathOperator{\Vect}{Vect}
\renewcommand{\to}{\lra}
\let\emptyset\varnothing
\newtheorem{proposition}{Proposition}[section]
\newtheorem{Theorem}[proposition]{Theorem}
\newtheorem{Lemma}[proposition]{Lemma}
\newtheorem{corollary}[proposition]{Corollary}
\newtheorem{TheoremA}{Theorem}
\theoremstyle{remark}
\newtheorem{Remarks}[proposition]{Remark}
\newtheorem{Remark}[proposition]{Remark}
\theoremstyle{definition}
\newtheorem*{Definition}{Definition}
\newtheorem*{example}{Example}
\newtheorem*{examples}{Example}
\pgfplotsset{compat=1.18} 
\newlist{enumerate2}{enumerate}{10}
\setlist[enumerate2]{label={({\alph*})}}
\setlist[enumerate]{label={({\arabic*})}}
\title{Shift orbits for elementary representations of Kronecker quivers}
\begin{document}

\rmfamily

%%%%%%%%%%%%%%%%%%%%%%%%%%%%%%%%%%%%%%%%%%%%%%%%%%%%%%%%%%%%%%%%%%

\address{Daniel Bissinger, Christian-Albrechts-Universit\"at zu Kiel, Heinrich-Hecht-Platz 6, 24118 Kiel, Germany}
\email{bissinger@math.uni-kiel.de}
\author{Daniel Bissinger}

\maketitle

%%%%%%%%%%%%%%%%%%%%%%%% Subjects classification %%%%%%%%%%%%%%%%%%%
\makeatletter
\newcommand\blfootnote[1]{%
  \begingroup
  \renewcommand\thefootnote{}\footnote{#1}%
  \addtocounter{footnote}{-1}%
  \endgroup
}

\makeatother

\blfootnote{\textup{2020} {\it Mathematics Subject Classification}: 16G20}
\blfootnote{{\it Keywords}: Kronecker quivers, Steiner bundles, Elementary representations, Generic representations}

\maketitle

%%%%%%%%%%%%%%%%%%%%%%%%%% Abstract %%%%%%%%%%%%%%%%%%%%%%%%%%%%%%%%%%%%%%%%%%%%%%%%
\begin{abstract} 
    Let $r \in \NN_{\geq 3}$. We denote by $K_r$ the wild $r$-Kronecker quiver with $r$ arrows $\gamma_i \colon 1 \lra 2$ and consider the action of the group $G_r \subseteq \Aut(\ZZ^2)$ generated by $\delta \colon \ZZ^2 \to \ZZ^2, (x,y) \mapsto (y,x)$ and $\sigma_{r} \colon \ZZ^2 \to \ZZ^2, (x,y) \mapsto (rx-y,x)$ on the set of regular dimension vectors \[\cR = \{ (x,y) \in \NN^2 \mid x^2 + y^2 - rxy < 1\}.\]
    A fundamental domain of this action is given by $\cF_r \coloneqq \{ (x,y) \in \NN^2 \mid  \frac{2}{r} x \leq y \leq x  \}$.
    We show that $(x,y) \in \cF_r$ is the dimension vector of an elementary representation if and only if 
    \[y \leq \min \{  \lfloor \frac{x}{r} \rfloor+\frac{x}{\lfloor \frac{x}{r} \rfloor}  -r, \lceil \frac{x}{r} \rceil   -\frac{x}{\lceil \frac{x}{r} \rceil} +r,r-1\},\]
    where we interpret $\lfloor \frac{x}{r} \rfloor+\frac{x}{\lfloor \frac{x}{r} \rfloor}  -r$ as $\infty$ for $1 \leq x < r$.
       In this case we also identify the set of elementary representations as a dense open subset of the irreducible variety of representations with dimension vector $(x,y)$. A complete combinatorial description of elementary representations for $r = 3$ has been given by Ringel. We show that such a compact description is out of reach when we consider $r \geq 4$, altough the representation theory of $K_3$ is as difficult as the representation theory of $K_r$ for $r \geq 4$. 
    \end{abstract}

%%%%%%%%%%%%%%%%%%%%%%%%%% Introduction %%%%%%%%%%%%%%%%%%%%%%%%%%%%%%%%%%%%%%%%%%%%%%%%

\section*{Introduction}

Let $\bmk$ be an algebraically closed field of arbitrary characteristic and $Q$ be a finite, connected and wild quiver without oriented cycles. It is well known that the theory of finite dimensional representations over $Q$ is undecidable (see \cite[4.4]{Ben91}, \cite{Pre96}), which makes a full classification of the category $\rep(Q)$ of finite dimensional representations a hopeless task.

The indecomposable representations of $Q$ fall into three classes: There are countable many (isomorphism classes of) so-called preinjective and preprojective indecomposable representations that are well-understood. All other indecomposable representations are called regular. A (not necessarily indecomposable) representation is called regular if all of its indecomposable direct summands are regular and we denote by $\reg(Q) \subseteq \rep(Q)$ the full subcategory containing all regular representations. This subcategory contains the large majority of representations and is responsible for the wild behaviour of the category $\rep(Q)$. 

Since regular representations are closed under extensions, there is a uniquely determined smallest class of regular representation $\cE \subseteq \reg(Q)$ closed under isomorphisms, whose extension-closure is $\reg(Q)$. In particular, every representation $M$ possesses a (in general not uniquely determined) finite filtration
\[ 0 = M_0 \subset M_1 \subset M_2 \subset \cdots \subset M_{l-1} \subset M_l = M\]
with $M_i/M_{i-1} \in \cE$ for all $i \in \{1,\ldots,l\}$. The representations in $\cE$ are called {\it elementary} and are the simple objects in the category of regular representations. The definition of elementary representations is due to Crawley-Boevey and is a natural generalization of quasi-simple representations living in regular tubes of tame hereditary algebras. 

Among other things, elementary representations are of interest because they are closely related (see \cite[3.1]{KL96}) to the graph of domination (see \cite[15.2, 15.3]{Ker96} for a precise definition), whose sinks are given by the extensively studied wild Kronecker algebras corresponding to generalized Kronecker quivers
\[ \xymatrix{
K_r =  & 1 \ar@/^1.4pc/^{\gamma_1}[r] \ar@/^/^{\gamma_2}[r]\ar@/_1pc/_{\gamma_r}[r] \ar@{}[r]|{\vdots} &  2 & , r \in \NN_{\geq 3}.
}
\]
Since a  representation in $\rep(Q)$ is elementary if and only if its Auslander-Reiten translate $\tau_{Q}(E)$ is elementary and the Coxeter transformation describes the $\tau_{Q}$-orbits on the level of the Grothendieck group, it is natural to consider Coxeter-orbits that belong to elementary representations.\\
These orbits have been studied systematically in \cite{Luk92,KL96} and it has been shown that there are only finitely many Coxeter-orbits of dimension vectors of elementary representations. The explicit number $e(Q) \in \NN$ of Coxeter-orbits of elementary representations is known in a few cases (see for example \cite[4.2.1]{Luk92}). But even for generalized Kronecker quivers this was unknown until recently: In \cite{Rin16} the equality $e(K_3) = 4$ has been proven.

In this article we tackle the general case and arrive at a criterion thats allows us to decide wether or not a dimension vector $(x,y)$ is the dimension vector of an elementary representation. In particular, we can decide wether a Coxeter-orbit belongs to the dimension vector of an elementary representation. As noted in \cite{Rin16}, it suffices to identify the elements in 
\[ \cF_r \coloneqq \{(x,y) \in \NN^2 \mid \frac{2}{r} x \leq y \leq x\} \]
that are the dimension vector of an elementary representation to obtain such a criterion. We follow this approach and arrive at:
\begin{TheoremA}
An element $(x,y) \in \cF_r$ is the dimension vector of an elementary representation if and only if
    \[(\ast) \quad y \leq \min \{  \lfloor \frac{x}{r} \rfloor+\frac{x}{\lfloor \frac{x}{r} \rfloor}  -r, \lceil \frac{x}{r} \rceil   -\frac{x}{\lceil \frac{x}{r} \rceil} +r,r-1\},\]
    where we interpret $\lfloor \frac{x}{r} \rfloor+\frac{x}{\lfloor \frac{x}{r} \rfloor}  -r$ as $\infty$ for $1 \leq x < r$.
\end{TheoremA}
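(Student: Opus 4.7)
\emph{Proof proposal.} The plan is to translate the elementary property into a statement about generic subdimension vectors and then apply Schofield's criterion with the Euler form $\langle (a,b),(c,d)\rangle = ac+bd-rad$ and the associated quadratic form $q(x,y)=x^2+y^2-rxy$. Concretely, a regular $(x,y)\in\cF_r$ is the dimension vector of an elementary representation precisely when no $\beta=(a,b)$ with $0<\beta<(x,y)$, $\beta\in\cR$ and $(x,y)-\beta\in\cR$ is a generic subdimension vector of $(x,y)$, i.e. $\mathrm{ext}(\beta,(x,y)-\beta)>0$ for every such $\beta$. Since this excludes only finitely many candidate $\beta$ and $\rep_{K_r}(x,y)$ is irreducible, the elementary locus is automatically open and thus dense as soon as non-emptiness is established, which also gives the final clause of the statement.

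For necessity, assume $(x,y)\in\cF_r$ violates $(\ast)$. For each of the three bounds that can fail I would produce an explicit $\beta$ with $\beta,(x,y)-\beta\in\cR$ and $\mathrm{ext}(\beta,(x,y)-\beta)=0$. The failure of $y\leq r-1$ is handled by a small regular $\beta$ of the form $(1,c)$ or $(c,1)$, with $c$ chosen according to the position of $(x,y)$ in $\cR$ so that the complement remains regular. For the second and third bounds, write $x=kr+s$ with $k=\lfloor x/r\rfloor$, $0\leq s<r$, and dually $x=k'r-s'$ with $k'=\lceil x/r\rceil$; the candidates are $\beta=(k,\ell)$ and $\beta=(k',\ell')$ adapted to these Euclidean divisions, and a direct computation identifies the thresholds $k+s/k$ and $k'+s'/k'$ appearing in $(\ast)$ as the critical values at which $\beta$ ceases to be a generic subdimension vector. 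The two cases are exchanged by the duality $\delta\in G_r$, which accounts for the symmetry between the floor and ceiling bounds in $(\ast)$.

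For sufficiency, assume $(\ast)$ holds. I would then show $\mathrm{ext}(\beta,(x,y)-\beta)>0$ for \emph{every} admissible $\beta\in\cR$ whose complement is again in $\cR$. The key geometric input is that $\cR$ is bounded by the two branches of $q=0$, and the requirement that both $\beta$ and $(x,y)-\beta$ remain in $\cR$ confines $\beta$ to a narrow strip near one of these branches; within each strip only finitely many ``extremal'' candidates need to be tested. The continued-fraction-type quantities $\lfloor x/r\rfloor+x/\lfloor x/r\rfloor$ and $\lceil x/r\rceil-x/\lceil x/r\rceil$ encode exactly the closest regular sub-dimensions of $(x,y)$ along these branches, and $(\ast)$ is precisely the statement that each such extremal $\beta$ fails Schofield's vanishing criterion, so no generic regular subdimension exists and the generic representation of dimension $(x,y)$ is elementary.

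The main obstacle I expect is the sufficiency step: ruling out all admissible $\beta$ uniformly demands sharp quantitative control of $q$ restricted to sub-dimensions of $(x,y)$ together with the number-theoretic input explaining why Euclidean division by $r$ governs the extremal regular sub-dimensions. Once this is achieved, density of the elementary locus follows from semicontinuity of $\dim\Hom$ applied to the finitely many candidate $\beta\in\cR$ dominated by $(x,y)$, combined with irreducibility of $\rep_{K_r}(x,y)$ and the non-emptiness provided by the sufficiency argument.
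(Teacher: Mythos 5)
Your opening reduction is correct and is, at bottom, the same engine the paper runs on: for $(x,y)\in\cF_r$ the generic representation is a brick, nonzero preprojective (resp.\ preinjective) representations have $q_r>0$ while regular ones have dimension vector in $\cR$, so by \cref{3.1} the vector $(x,y)$ is elementary if and only if there is no $\beta$ with $\beta$ and $(x,y)-\beta$ both nonzero elements of $\cR$ and $\langle\beta,(x,y)-\beta\rangle_r\geq 0$ (using \cref{2.7}, legitimate since $q_r(x,y)\leq 0$). This packages into one inequality what the paper distributes over the auxiliary categories $\rep_{\esp}$ and $\rep_{\inj}$ and the Steiner-bundle rank bound, and your openness/density argument goes through once you intersect with the brick locus to guarantee regularity. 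So the skeleton is sound and arguably more economical than the paper's.

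The gap is that the proposal stops exactly where the proof begins: the equivalence between ``$\langle\beta,(x,y)-\beta\rangle_r<0$ for every admissible $\beta$'' and the explicit inequality $(\ast)$ is the entire content of the theorem, and you offer nothing for it beyond ``a direct computation identifies the thresholds'' and ``only finitely many extremal candidates need to be tested''. Locating the maximum of the indefinite quadratic $\langle(a,b),(x-a,y-b)\rangle_r$ over the lattice points of the admissible region, and proving that it is controlled by the two specific candidates tied to $\lceil x/r\rceil$ (namely $(1,\,y-\lceil x/r\rceil)$ and its dual partner), is precisely what \cref{3.3} and \cref{3.6} accomplish over several pages; naming $k=\lfloor x/r\rfloor$ and $k'=\lceil x/r\rceil$ does not yet show that no other $\beta$ can achieve a nonnegative value. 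Moreover, one of your concrete sub-claims is false: for $y\geq r$ you assert a witness of the form $(1,c)$ or $(c,1)$ always exists, but for $r=3$ and $(x,y)=(4,3)$ the only admissible $\beta$ with $\langle\beta,(4,3)-\beta\rangle_3\geq 0$ is $\beta=(2,2)$ (with value $0$); the candidates $(1,2)$ and $(2,1)$ fail because $(3,1)\notin\cR_3$ and $\langle(2,1),(2,2)\rangle_3=-6$, respectively. This is no accident — the paper's own proof of \cref{3.3} requires a three-case analysis and, in its last subcase, falls back on Ringel's explicit classification for $r=3$. Until the extremal candidates are identified and shown to dominate all others in both directions, what you have is a correct reduction plus an unproven combinatorial identity, not a proof.
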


In the following we outline the structure of this article and point out differences to \cite{Rin16} in the proof of Theorem A. A crucial step in \cite{Rin16}, to show that an elementary representation $E$ with dimension vector $\dimu E \in \cF_3$ has to satisfy $\dimu E \in  \{(1,1),(2,2)\}$, is an elegant application of the Projective Dimension Theorem (see \cite[I.7.2]{Har77}). The Projective Dimension Theorem is used to prove that every $K_3$-representation $M$ with dimension $(x,y)$ and and $2 \leq y \leq x+1$ has a subrepresentation with dimension vector $(1,2)$.

In the case $r \geq 4$ this geometric tool no longer yields strong enough restrictions on dimension vectors in $\cF_r$ that are the dimension vector associated to an elementary representation: For $r = 4$ the approach does not rule out the dimension vectors $(3,3) \in \cF_4$ altough it can not belong to an elementary representation by Theorem A.

Our approach rests on the consideration of the full subcategories $\rep_{\proj}(K_r,d) \subseteq \rep(K_r)$ ($1 \leq d < r$), of so-called {\it relative $d$-projective} Kronecker representations, each being equivalent to the category of Steiner bundles on the Grassmannian $\Gr_d(A_r)$ (see \cite[3.2.3]{BF24}), where $A_r = \bigoplus^r_{i=1} \bmk \gamma_i$ denotes the arrow space of the path algebra $\bmk K_r$. Restrictions on the minimal rank of non-trivial Steiner bundles, first established in \cite{AM15}  for fields of characteristic zero, and the connection between relative projective representations and representations with the so-called equal socle property (this definition originated from modular representation theory of finite group schemes) allow us to prove that an elementary representation $E$ with dimension vector $(x,y) \in \cF_r$ has to satisfy $(\ast)$.
For $(x,y) \in \cF_r$ satisfying $(\ast)$, we show that the elements $f \in \rep(K_r;\bmk^x,\bmk^x) \coloneqq \Hom_\bmk(\bmk^x,\bmk^y)^r$ with $(\bmk^x,\bmk^y,f) \in \rep(K_r)$ elementary form an open set $\cE(x,y)$ in the affine variety $\rep(K_r;\bmk^x,\bmk^x)$. We do so by showing that being relative $d$-projective and having the equal socle property is an open property. Moreover, we prove that this set has to be non-empty by dimension reasons, showing that the assumptions in Theorem A are sufficient.

An important tool in the proof of Theorem A is a new description by Reineke (see \cite[3.4]{Rei22}) concerning general subrepresentations of Kronecker representations. We characterize the category of representations with the equal $d$-socle property as those representations that do not have subpresentations with dimension vector $(1,a)$ for all $a \in \{0,\ldots,r-d\}$. This allows us to apply Reineke's Theorem and generalize Ringel's approach.

In the last section of this article we study the internal structure of elementary representations for $K_r$ with $r \geq 3$. For the tame Kronecker quiver $K_2$, the quasi-simple representations are well-known and completely determined in terms of their coefficient quiver, i.e. there exists a non-zero element $\alpha \in A_2$ such that the coefficient quiver has the following form:
\[
\xymatrix{
\bullet \ar[d]^{\alpha}  \\
\bullet.
}
\]
 For $r = 3$, the elementary representations with dimension vector in $\cE_3 = \{(1,1),(2,2)\}$ can also be described combinatorially in terms of the coefficient quiver (see \cite{Rin16}). More precisely: There exists a basis $\alpha,\beta,\gamma$ of the arrow space $A_3$ such that the coefficient quiver has one of the following two forms:
\[
\xymatrix{
\bullet \ar[d]^{\alpha} & & & \bullet \ar[d]_{\alpha} \ar@/^/[dr]_>>{\beta} & \bullet \ar[d]^{\alpha} \ar@/_/[dl]^>>{\gamma} \\
\bullet & & & \bullet & \bullet.
}
\]
Rephrasing this in the terms of the natural action of the general linear group $\GL(A_r)$ on $\rep(K_r)$, this just means that a representation $E \in \rep(K_3)$ with dimension vector in $\cF_3$ is elementary if and only if $M$ is isomorphic to an element in the $\GL(A_r)$-orbit of $E_1 \coloneqq (\bmk,\bmk,(\id_\bmk,0,0))$ or $E_2 \coloneqq (\bmk^2,\bmk^2,(\id_{\bmk^2}, \beta,\gamma))$ with $\beta(a,b) = (0,a)$ and $\gamma(a,b) = (b,0)$ for all $(a,b) \in \bmk^2$. Since the action of $\GL(A_r)$ on $\rep(K_r)$ commutes with the Auslander-Reiten translation $\tau_{K_r}$, we therefore can compute every elementary representation from $E_1$ and $E_2$. We show that the situation is quite different for $r \geq 4$.
\begin{TheoremA}
Let $r \geq 4$. Then there are infinitely many, pairwise non-isomorphic elementary representations with the same dimension vector $(x,y) \in \cF_r$ that all are in different $\GL(A_r)$-orbits.
\end{TheoremA}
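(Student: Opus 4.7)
The plan is a dimension count built on the first Theorem~A above. For any $(x,y) \in \cF_r$ satisfying $(\ast)$, that result identifies the elementary locus $\cE(x,y)$ as a dense open subset of the irreducible affine variety $\rep(K_r;\bmk^x,\bmk^y) \cong \bmk^{rxy}$. The connected group $G := \GL(A_r) \times \GL(\bmk^x) \times \GL(\bmk^y)$ acts on this variety, and two elementary representations determine the same $\GL(A_r)$-orbit of isomorphism classes precisely when their defining tuples lie in the same $G$-orbit. Hence it suffices to locate, for each $r \geq 4$, a dimension vector obeying $(\ast)$ and satisfying $\dim \cE(x,y) = rxy > r^2 + x^2 + y^2 = \dim G$, after which an irreducibility argument produces infinitely many orbits.

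A uniform choice that fulfils both requirements is $(x,y) = (r+2,\,3)$. The inclusion $(r+2,3) \in \cF_r$ is immediate from $\tfrac{2}{r}(r+2) = 2 + \tfrac{4}{r} \leq 3 \leq r+2$ for $r \geq 4$. To verify condition $(\ast)$, note that $\lfloor (r+2)/r \rfloor = 1$ and $\lceil (r+2)/r \rceil = 2$ when $r \geq 3$, so the three bounds take the values $3$, $\tfrac{r}{2}+1$ and $r-1$, each of which is $\geq 3$ for $r \geq 4$. Hence the minimum equals $3 = y$, so by Theorem~A the vector $(r+2,3)$ is the dimension vector of an elementary representation. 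A direct computation then yields
\[
rxy - (r^2 + x^2 + y^2) = 3r(r+2) - \bigl(r^2 + (r+2)^2 + 9\bigr) = r^2 + 2r - 13,
\]
which is strictly positive for every $r \geq 4$.

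Once the dimension vector is fixed, the conclusion follows by a standard irreducibility argument. Assume for contradiction that $\cE(r+2,3)$ is covered by finitely many $G$-orbits $O_1,\ldots,O_n$. Each closure $\overline{O_i}$ is irreducible and closed in $\rep(K_r;\bmk^{r+2},\bmk^3)$ of dimension at most $\dim G$, hence strictly less than $\dim \cE(r+2,3)$. Their union would then cover the dense open (in particular irreducible) subset $\cE(r+2,3)$ while being a finite union of proper closed subsets of the ambient irreducible variety, a contradiction. Thus $\cE(r+2,3)$ meets infinitely many $G$-orbits; picking one representative from each produces the claimed infinite family of pairwise non-isomorphic elementary representations with dimension vector $(r+2,3)$ lying in pairwise distinct $\GL(A_r)$-orbits.

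The main obstacle is purely combinatorial: condition $(\ast)$ severely restricts the available $(x,y) \in \cF_r$, so one must find a dimension vector large enough to beat the quadratic size of $\dim G$ while still lying in the sparse elementary locus. For small $r$ the options are very limited — for instance, $\cF_4$ contains only the elementary dimension vectors $(1,1), (2,1), (2,2), (3,2), (6,3)$, of which only the last is big enough — so the candidate $(r+2,3)$ is essentially forced at $r = 4$ and happens to work uniformly for all $r \geq 4$. Beyond this combinatorial search, no further ingredient is needed.
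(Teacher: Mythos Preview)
Your proof is correct and follows essentially the same route as the paper: choose the dimension vector $(r+2,3)$, verify it satisfies the criterion of Theorem~A (the paper uses the equivalent formulation $\lfloor \tfrac{x}{r}\rfloor(y+r-\lfloor \tfrac{x}{r}\rfloor)\le x\le \lceil \tfrac{x}{r}\rceil(r-y+\lceil \tfrac{x}{r}\rceil)$), and then run the dimension/irreducibility argument to rule out finitely many $G$-orbits. Your inequality $rxy > r^2 + x^2 + y^2$ is exactly the paper's condition $q_r(x,y) < -r^2$, and your contradiction argument is the content of the paper's Lemma~4.2.
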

It is well known that $\bmk K_s$ is wild algebra if and only $s \geq 3$. In particular, the representation theory of $K_3$ is as difficult as the representation theory of $K_r$ for $r \geq 4$. Moreover, in all cases, known to the author, proofs for $K_3$ can be easily generalized to $K_r$ by substituting $r$ for 3.
However, the above theorem tells us that the problem of classifying elementary representations gets much more difficult, when we consider $r \geq 4$ arrows.

\section*{Acknowledgement}
I would like to thank Rolf Farnsteiner for helpful comments and suggestions on an earlier version of this article, that helped to improve the exposition of this article. 
Furthermore I thank Claus Michael Ringel for sharing his insights on elementary Kronecker representations and answering questions regarding the first draft of \cite{Rin16}.

\setcounter{section}{0}
\section{Preliminaries}
Throughout $\bmk$ denotes an algebraically closed field of arbitrary characteristic and all $\bmk$-vector spaces under consideration are of finite dimension.

\subsection{Wild quivers} We assume that the reader is familiar with basic results on the representations theory of wild quivers. In the following, we will give a brief introduction, recalling the main definitions that we will use throughout this work.
For a well written survey on the subject, where all the details and proofs may be found, we refer to \cite{Ker96}.\\
Let $Q$ be a finite, connected and wild quiver without oriented cycles and vertex set $Q_0 = \{1,\ldots,n\}$. We denote by $\rep(Q)$ the category of finite dimensional representations over $Q$ and let $\bmk Q$ be the corresponding path algebra. The category $\rep(Q)$ and the category of finite dimensional (left) $\bmk Q$-modules $\modd \bmk Q$ are equivalent which allows us to switch freely between representations and modules.\\
Let $M \in \modd \bmk Q$. Then $\Ext^1_{\bmk Q}(M,\bmk Q)$ is a right $\bmk Q$-module, so that $\tau_{\bmk Q}(M) \coloneqq \Ext^1_{Q}(M,\bmk Q)^\ast$ is a left $\bmk Q$-module. There results an endofunctor, the {\it Auslander-Reiten translation},
\[ \tau_{\bmk Q} \colon \modd \bmk Q \lra \modd \bmk Q \] 
which is left exact, since $\bmk Q$ is hereditary. We denote the induced functor on $\rep(Q)$ by $\tau_{Q} \colon \rep(Q) \lra \rep(Q)$.
Similarly, we obtain the functor $\tau^{-1}_{Q} \colon\rep(Q) \lra \rep(Q)$ induced by 
$\tau^{-1}_{\bmk Q} \colon \modd \bmk Q \lra \modd \bmk Q ; M \mapsto  \Ext^1_{\bmk Q}(M^\ast,\bmk Q)$.

An indecomposable representation $M \in \rep(Q)$ is called {\it preprojective (preinjective)}, provided $\tau^n_{Q}(M) = 0$ (resp. $\tau^{-n}_{Q}(M) = 0$) for some $n \in \NN$. All other indecomposable representations are called {\it regular}. Since $Q$ is a wild quiver, and therefore not of Dynkin type, the three classes preprojective, preinjective and regular are mutually exclusive.\\
Given a representation $M \in \rep(Q)$, we let $\dimu M \coloneqq (\dim_\bmk M_i)_{i \in Q_0} \in \ZZ^n$ be its {\it dimension vector}. This gives rise to an isomorphism 
\[ \dimu \colon K_0(\rep(Q)) \lra \ZZ^n,\]
which identifies the Grothendieck group $K_0(\rep(Q))$ of $\rep(Q)$ with $\ZZ^n$.
Given $i \in Q_0$ we denote by $S(i)$ the simple representation corresponding to $i$ and by $P(i)$ and $I(i)$ its projective cover and injective hull, respectively. The sets $\{ \dimu P(i) \mid i \in \QQ_0\}$, $\{ \dimu I(i) \mid i \in \QQ_0\}$ are $\ZZ$-bases of $\ZZ^n$. The {\it Coxeter transformation} $\Phi_Q$ is the $\ZZ$-linear map $\Phi_Q \colon \ZZ^n \lra \ZZ^n$ with
\[ \Phi_Q(\dimu P(i)) = - \dimu I(i)\]
for all $i \in Q_0$. We have 
\[\dimu \tau_Q(M) = \Phi_Q(\dimu M) \ \text{and} \ \dimu \tau^{-1}_Q(N) = \Phi^{-1}_Q(\dimu M)\]
for $M,N$ indecomposable with $M \not \cong P(i), I \not \cong I(i)$ for all $i \in Q_0$. An arbitary non-zero representation $M \in \rep(Q)$ is called {\it preprojective, preinjective} or {\it regular}, provided all its indecomposable direct summands are preprojective, preinjective or regular, respectively. By definition the zero representation is preprojective, preinjective and regular.

\subsection{Wild Kronecker quivers}
We specialize our considerations to the family of wild Kronecker quivers. Throughout we let $r \in \NN_{\geq 3}$. The {\it (generalized) Kronecker quiver} with $r$ arrows, denoted by $K_r$, is the bipartite quiver with two vertices $1,2$ and arrows $\gamma_i \colon 1 \lra 2$ ($1 \leq i \leq r$). A representation $M$ over $K_r$ is a tuple $M = (M_1,M_2,(M(\gamma_i))_{1 \leq i \leq r})$ consisting of finite dimensional vector spaces $M_1,M_2$ and $\bmk$-linear maps $M(\gamma_i) \colon M_1 \lra M_2$ for each $1 \leq i \leq r$. A morphism $f \colon M \lra N$ of representations is a pair $(f_1,f_2)$ of $\bmk$-linear maps such that, for each $i \in \{1,\ldots,r\}$, the diagram
\[\xymatrix{
M_1 \ar^{M(\gamma_i)}[r] \ar^{f_1}[d]& M_2 \ar^{f_2}[d] \\
N_1 \ar^{N(\gamma_i)}[r] & N_2
}\]
commutes. The simple representations corresponding to the vertices $1$ and $2$ are denoted by $S_1 = S(1)$ and $S_2 = S(2)$, respectively. 
We let $A_r \coloneqq \bigoplus^r_{i=1} \bmk \gamma_i$ be the {\it arrow space} of $K_r$ and realize the path algebra of $K_r$ as 
\[ \bmk K_r = \begin{pmatrix}
\bmk & 0\\
A_r & \bmk
\end{pmatrix}.\]
We let 
\[ \langle - , - \rangle_r \colon \ZZ^2 \times \ZZ^2 \lra \ZZ, (x,y) \mapsto x_1 y_1 + x_2 y_2 - r x_1 y_2 \]
be the bilinear form given by $K_r$, with corresponding {\it Tits quadratic form} $q_r \colon \ZZ^2 \lra \ZZ, x \mapsto \langle x,x \rangle_r$. 

\subsection{Shift functors}
We denote by $\sigma_{K_r},\sigma^{-1}_{K_r} \colon \rep(K_r) \to \rep(K_r)$ the {\it shift functors}. These functors correspond to the BGP-reflection functors but take into account that the opposite quiver of $K_r$ is isomorphic to $K_r$, i.e. $D_{K_r} \circ \sigma_{K_r} \cong \sigma_{K_r}^{-1} \circ D_{K_r}$, where $D_{K_r} \colon \rep(K_r) \to \rep(K_r)$ denotes the standard duality.\\
For a representation $M \in \rep(K_r)$ we consider the $\bmk$-linear map
\[f_M \colon (M_1)^r \to M_2, (m_i) \mapsto \sum^{r}_{i=1} M(\gamma_i)(m_i).\]
Then $\sigma_{K_r}(M)$ is by definition the representation \[ (\sigma_{K_r}(M)_1,\sigma_{K_r}(M)_2,(\sigma_{K_r}(M)(\gamma_i))_{1 \leq i \leq r}) = (\ker f_M,M_1,(\pi_{i}|_{\ker f_M})_{1 \leq i \leq r}),\]
where $\pi_{i} \colon (M_1)^r \to M_1$ is the projection onto the $i$-th component for each $i \in \{1,\ldots,r\}$. Recall that $\sigma_{K_r}$ induces an equivalence
\[ \sigma_{K_r} \colon \rep_2(K_r) \lra \rep_1(K_r)\]
between the full subcategories $\rep_i(K_r)$ of $\rep(K_r)$, whose objects do not have have any direct summands isomorphic to $S_i$. By the same token, $\sigma^{-1}_{K_r}$ is a quasi-inverse of $\sigma_{K_r}$. The map
\[ \sigma_r \colon \ZZ^2 \lra \ZZ^2 ; (x,y) \mapsto (rx - y,x)\]
is invertible and satisfies
\[ \dimu \sigma_{K_r}(M) = \sigma_{r}(\dimu M) \ \text{and} \ \dimu \sigma^{-1}_{K_r}(N) = \sigma^{-1}_{r}(\dimu N)\]
for all $M \in \rep_2(K_r)$ and $N \in \rep_1(K_r)$. Moreover, we have $\sigma_{K_r} \circ \sigma_{K_r} \cong \tau_{K_r}$ and $\sigma^2_r = \Phi_r \coloneqq \Phi_{K_r}$.

\subsection{Indecomposable representations and Kac's Theorem} 
The preprojective and preinjective indecomposable Kronecker representations are well-understood: We define $P_0 \coloneqq S_2$ and $P_i \coloneqq \sigma^{-1}_{K_r}(P_{i-1})$ for all $i \geq 1$. The representations $P_i$ form a complete list of representatives of the isomorphism classes of indecomposable preprojective Kronecker representations. By the same token, a complete list of representatives of the isomorphism classes of indecomposable preinjective Kronecker representations is given by $I_i \coloneqq D_{K_r}(P_i)$, $i \in \NN_0$.  
Since $\sigma_r$ and $\sigma_r^{-1}$ leave the Tits form invariant and $q_r(1,0) = 1 = q_r(0,1)$, this shows that $q_{K_r}(\dimu N) = 1$ for $N$ indecomposable and preprojective or preinjective.
We let $L_r \coloneqq \frac{r + \sqrt{r^2-4}}{2}$ and note that $L_r$ and $\frac{1}{L_r}$ are the roots of the polynomial $f_r \coloneqq X^2 - rX + 1\in \ZZ[X]$. Therefore they satisfy the equation $\frac{1}{L_r} = r - L_r$. Moreover, we have $r- 1 < L_r < r$ since $r \geq 3$. We let $\delta \colon \ZZ^2 \lra \ZZ^2 ; (a,b) \mapsto (b,a)$ be twist function on $\ZZ^2$.
Then we have
\[ \dimu P_i = (a_i,a_{i+1}) = \delta(\dimu I_i), \ \text{where for all} \ i \in \NN_0 \ a_i \coloneqq \frac{(L_r)^i - (\frac{1}{L_r})^i}{\sqrt{r^2-4}}.\]
We recall a simplified version of Kac's Theorem (see \cite[Thm.B]{Kac82} and \cite[Thm.3]{Rin76}) and an immediate consequence thereof that suffice for our purposes.

\begin{Theorem}(Kac's Theorem for $K_r$) \label{1.1}
Let $\delta \in \NN^2_0 \setminus \{0\}$.
\begin{enumerate}
    \item If $\delta = \dimu M$ for some indecomposable $M \in \rep(K_r)$, then $q_r(\delta) \leq 1$.
    \item If $q_{r}(\delta) = 1$, then there is a, up to isomorphism, unique indecomposable representation $M \in \rep(K_r)$ such that $\dimu M = \delta$. The representation $M$ is preprojective or preinjective and preprojective if and only if $\delta_1 \leq \delta_2$.
    \item If $q_{r}(\delta) \leq 0$, then there are infinitely many pairwise non-isomorphic indecomposable representations with dimension vector $\delta$, each being regular.
\end{enumerate}
\end{Theorem}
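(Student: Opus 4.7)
Part (1) is the essence of Kac's Theorem and the plan is to cite \cite[Thm.~B]{Kac82} (or \cite[Thm.~3]{Rin76}) directly: for every finite acyclic quiver $Q$ the dimension vector of any indecomposable representation is a positive root of the associated Kac--Moody algebra, and every positive root $\delta$ satisfies the Tits inequality $q_Q(\delta) \leq 1$. Specializing $Q = K_r$ yields (1) verbatim.

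For (2), my plan is to combine the explicit list $\{\dimu P_i,\dimu I_i\}_{i \in \NN_0}$ recalled right before the theorem with a descent argument in $\NN_0^2$. Since $\sigma_r$ preserves $q_r$ and $q_r(1,0) = q_r(0,1) = 1$, both families of dimension vectors have $q_r$-value $1$ by construction, and the representations $P_i, I_i$ supply indecomposable witnesses. For the converse I would take $\delta \in \NN_0^2 \setminus \{0\}$ with $q_r(\delta) = 1$ and iterate $\sigma_r$ when $\delta_1 \leq \delta_2$, respectively $\sigma_r^{-1}$ when $\delta_1 > \delta_2$. The key positivity check is a short computation: reading $q_r(\delta) = 1$ as the quadratic $\delta_2^2 - r\delta_1\delta_2 + (\delta_1^2-1) = 0$, one gets $\delta_2 = \tfrac{1}{2}\bigl(r\delta_1 + \sqrt{(r^2-4)\delta_1^2 + 4}\bigr)$, so the first coordinate $r\delta_1 - \delta_2$ of $\sigma_r(\delta)$ is non-negative precisely when $\delta_1 \geq 1$, which is automatic as long as we have not reached a simple root. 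A comparison of $\ell^1$-norms shows that the iteration is strictly descending, so it terminates at $(0,1)$ or $(1,0)$ and identifies $\delta$ with some $\dimu P_i$ or $\dimu I_i$; the dichotomy $\delta_1 \leq \delta_2$ (preprojective) versus $\delta_1 > \delta_2$ (preinjective) follows from $a_i < a_{i+1}$. Uniqueness of the indecomposable with dimension vector a real root then follows because $\sigma_{K_r}^{\pm 1}$ are quasi-inverse equivalences between $\rep_1(K_r)$ and $\rep_2(K_r)$, reducing the statement to the obvious uniqueness of indecomposables in dimensions $(1,0)$ and $(0,1)$.

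For (3), I would invoke Kac's theorem a second time: when $q_r(\delta) \leq 0$ and $\delta \in \NN_0^2 \setminus \{0\}$, then $\delta$ is a positive imaginary root and Kac shows that the isomorphism classes of indecomposables with dimension vector $\delta$ are parametrized by an irreducible variety of dimension $1 - q_r(\delta) \geq 1$, which is infinite over the algebraically closed field $\bmk$. Regularity of each such indecomposable is then forced by exclusion: any preprojective or preinjective indecomposable is some $P_i$ or $I_i$, all of which satisfy $q_r(\dimu(\cdot)) = 1 > 0$, contradicting $q_r(\delta) \leq 0$.

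The principal obstacle I anticipate is the combinatorial positivity bookkeeping in the descent step of (2); the remainder of the argument amounts to assembling the explicit Kronecker structure already developed in the preliminaries together with Kac's classical classification.
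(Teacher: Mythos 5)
Your proposal is correct and follows essentially the same route as the paper, which offers no proof of its own but simply cites \cite[Thm.~B]{Kac82} and \cite[Thm.~3]{Rin76} together with the explicit preprojective/preinjective data from the preliminaries. Your descent argument via $\sigma_r$ (including the positivity check $r\delta_1 \geq \sqrt{(r^2-4)\delta_1^2+4}$ for $\delta_1 \geq 1$ and the $\ell^1$-norm decrease) is a correct elaboration of the standard reflection-functor reduction that the paper leaves implicit.
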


\begin{corollary}\label{1.2}
Let $M \in \rep(K_r)$ be indecomposable. The following statements hold.
\begin{enumerate}
    \item $M$ is preprojective if and only if $\dim_\bmk M_1 < \frac{1}{L_r} \dim_\bmk M_2$.
    \item $M$ is regular if and only if $\frac{1}{L_r} \dim_\bmk M_2 < \dim_\bmk M_1 < L_r \dim_\bmk M_2$.
    \item $M$ is preinjective if and only if $L_r \dim_\bmk M_2 <  \dim_\bmk M_1$.
\end{enumerate}
\end{corollary}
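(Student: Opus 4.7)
This corollary is a straightforward consequence of Kac's Theorem once one analyses the regions cut out by the Tits form $q_r(x,y) = x^2 - rxy + y^2$. The key observation is that for $r \geq 3$ the discriminant $r^2 - 4$ is not a perfect square (there is no integer solution to $(r-k)(r+k) = 4$ with $r \geq 3$), so $L_r$ is irrational. Treating $q_r(x,y)$ as a quadratic in $x$ with $y > 0$ fixed, its two roots are $L_r y$ and $y/L_r$. Hence for any $\delta = (x,y) \in \NN^2 \setminus \{0\}$ one has $q_r(\delta) \neq 0$, with $q_r(\delta) < 0$ iff $y/L_r < x < L_r y$, and $q_r(\delta) > 0$ iff $x < y/L_r$ or $L_r y < x$.

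Having fixed this trichotomy, I would translate it through Kac's Theorem. For an indecomposable $M$ with $\dimu M = (x,y) \neq 0$, part (1) of Kac's Theorem gives $q_r(x,y) \leq 1$; combined with the previous paragraph this forces $q_r(x,y) = 1$ or $q_r(x,y) \leq -1$. Using the fact (recorded just before the statement) that every preprojective or preinjective indecomposable $N$ satisfies $q_r(\dimu N) = 1$, parts (2) and (3) of Kac's Theorem then yield that $M$ is regular iff $q_r(\dimu M) < 0$. This directly gives (2), i.e.\ $y/L_r < x < L_r y$.

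For (1), if $M$ is preprojective then $\dimu M = (a_i, a_{i+1})$ with $a_i \leq a_{i+1}$ and $q_r(\dimu M) = 1 > 0$, so $x \leq y < L_r y$ rules out $L_r y < x$ and leaves $x < y/L_r$. Conversely, $x < y/L_r$ forces $q_r(x,y) > 0$, and the Kac bound $q_r(x,y) \leq 1$ then gives $q_r(x,y) = 1$; since $x < y/L_r < y$, part (2) of Kac's Theorem identifies $M$ as preprojective. Statement (3) follows by applying (1) to the dual $D_{K_r}(M)$, which is preprojective iff $M$ is preinjective and has dimension vector $(y,x)$: this yields $M$ preinjective iff $y < x/L_r$ iff $L_r y < x$. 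There is no real obstacle in the argument; the only substantive input beyond Kac's Theorem is the irrationality of $L_r$, which is what excludes the degenerate case $q_r = 0$ and makes the trichotomy clean.
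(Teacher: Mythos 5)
Your proof is correct and is exactly the argument the paper intends: the paper states \cref{1.2} without proof as an ``immediate consequence'' of Kac's Theorem, and your derivation via the factorisation of $q_r(x,y)$ as a quadratic in $x$ with roots $L_r y$ and $y/L_r$, the irrationality of $L_r$ excluding $q_r=0$, and the dichotomy $q_r=1$ versus $q_r<0$ is the standard way to fill in the details. The duality step for (3) is also consistent with the paper's convention $\dimu P_i=\delta(\dimu I_i)$.
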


\section{Geometric considerations and restrictions on dimension vectors}

Throughout this section $d$ denotes a natural number with $1 \leq d < r$. For $(x,y) \in \NN^2_0$ we write
\[ \Delta_{(x,y)}(d) \coloneqq y - d x \ \text{and} \ \nabla_{(x,y)}(d) \coloneqq d y- x.\]
For a representation $M \in \rep(K_r)$, or vector spaces $M_1,M_2 \in \modd \bmk$, we define 
\[\Delta_{M}(d) \coloneqq \Delta_{(M_1,M_2)}(d) \coloneqq \Delta_{(\dim_\bmk M_1,\dim_\bmk M_2)}(d) \ \text{and}\]
\[\nabla_{M}(d) \coloneqq \nabla_{(M_1,M_2)}(d) \coloneqq \nabla_{(\dim_\bmk M_1,\dim_\bmk M_2)}(d).\]

\subsection{Relative projective representations and vector bundles}

Let $M \in \rep(K_r)$ be a representation with {\it structure map}
\[ \psi_M \colon A_r \otimes_{\bmk} M_1 \to M_2 \ ; \ \sum^r_{i=1} \gamma_i \otimes m \mapsto \sum^r_{i=1} M(\gamma_i)(m).\] 
We say that $M \in \rep(K_r)$ is {\it relative $d$-projective}, provided that $\psi_M|_{\fv \otimes M_1}$ is injective for each $\fv \in \Gr_d(A_r)$, where $\Gr_d(A_r)$ denotes the Grassmannian of $d$-dimensional subspaces of $A_r$. 

\bigskip

\begin{Remark}\label{2.1} The terminology "relative $d$-projective" is motivated  by the fact that $\psi_M|_{\fv \otimes M_1}$ is injective if and only the restriction of the $\bmk K_r$-module $M$ to the subalgebra 
\[ \bmk K_d \cong \bmk \fv \coloneqq \begin{pmatrix}
\bmk & 0 \\
\fv  & \bmk
\end{pmatrix} \subseteq \bmk K_r \]
is projective (cf. \cite[2.1.5]{BF24}).
\end{Remark}

\bigskip

The full subcategory of relative $d$-projective representations is denoted by $\rep_{\proj}(K_r,d)$. This category is a torsion-free class (see \cite[2.2.1]{BF24}) closed under $\sigma_{K_r}^{-1}$ and gives rise to special vector bundles (locally free coherent sheaves) on $\Gr_d(A_r)$. In the following we recall results and definitions from \cite{BF24} and \cite{AM15}.\\
Let $\Vect(\Gr_d(A_r))$ be the category of vector bundles on $\Gr_d(A_r)$ with structure sheaf $\cO_{\Gr_d(A_r)}$. Moreover, let $\cU_{(r,d)}$ be the {\it universal vector bundle} of $\Gr_{d}(A_r)$. A locally free sheaf $\cF \in \Coh(\Gr_d(A_r))$ is called {\it Steiner bundle}, provided there exist vector spaces $V_1,V_2$ and a short exact sequence
\[ 0 \lra V_1 \otimes_\bmk \cU_{(d,r)} \lra V_2 \otimes_\bmk \cO_{\Gr_d(A_r)} \lra \cF \lra 0.\]
\indent We denote by $\StVect(\Gr(A_r))$ the full subcategory of Steiner bundles on $\Gr_d(A_r)$. The following result is proven in \cite[2.3.2, 3.3.2, 3.3.3]{BF24}. The proof of (2) elaborates on \cite[2.4]{AM15}, where the result was first shown for algebraically closed fields of characteristic zero.

\begin{Theorem}\label{2.2}
The following statements hold. 
\begin{enumerate}
    \item There exists a fully faithful and exact functor
\[ \widetilde{\Theta}_d \colon \rep_{\proj}(K_r,d) \lra \Vect(\Gr_d(A_r))\]
with essential image $\StVect(\Gr_d(A_r))$. Moreover, there is a short exact sequence
\[ 0 \lra M_1 \otimes_\bmk \cU_{(r,d)} \lra M_2 \otimes_\bmk \cO_{\Gr_d(A_r)} \lra \widetilde{\Theta}_d(M) \lra 0\]
for each $M \in \rep_{\proj}(K_r,d)$.
\item For each Steiner bundle
\[ 0 \lra V_1 \otimes_\bmk \cU_{(r,d)} \lra V_2 \otimes_\bmk \cO_{\Gr_d(A_r)}\lra \cF \lra 0 \] we have $\rk(\cF) \geq  \min \{d(r-d),(\dim_\bmk V_1)(r-d)\}$.
\item Let $M \in \rep_{\proj}(K_r,d)$, then $\Delta_M(d) \geq \min \{d(r-d),\dim_\bmk M_1(r-d)\}$.
\end{enumerate}
\end{Theorem}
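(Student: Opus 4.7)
The plan is to establish the three parts in order, with (3) being an immediate corollary of (1) and (2).

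For (1), I would construct $\widetilde{\Theta}_d$ pointwise. Given $M \in \rep_{\proj}(K_r,d)$, compose the tautological inclusion $\cU_{(r,d)} \hookrightarrow A_r \otimes_\bmk \cO_{\Gr_d(A_r)}$ with $\psi_M \otimes \id$ to obtain a sheaf morphism $\iota_M \colon M_1 \otimes_\bmk \cU_{(r,d)} \to M_2 \otimes_\bmk \cO_{\Gr_d(A_r)}$. Relative $d$-projectivity is by definition the statement that the fiber of $\iota_M$ at every $[\fv] \in \Gr_d(A_r)$, namely $\psi_M|_{\fv \otimes M_1}$, is injective; hence $\iota_M$ is a fiberwise-injective map of vector bundles and its cokernel is again locally free. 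Set $\widetilde{\Theta}_d(M) \coloneqq \coker \iota_M$; the desired short exact sequence is built in, functoriality is clear, and exactness of $\widetilde{\Theta}_d$ follows from the snake lemma. That $\widetilde{\Theta}_d$ lands in $\StVect(\Gr_d(A_r))$ is immediate from the sequence; for the reverse direction, any short exact sequence of Steiner type corresponds by adjunction to a map $V_1 \otimes A_r \to V_2$ (using $H^0(\Gr_d(A_r), \cU_{(r,d)}^\vee) = A_r^\vee$), and fiberwise injectivity on the Grassmannian translates back into relative $d$-projectivity of the associated Kronecker representation. Full faithfulness would be obtained by applying $\Hom_{\cO}(-, \widetilde{\Theta}_d(N))$ to the defining sequence of $\widetilde{\Theta}_d(M)$ and invoking the standard cohomology computations $H^0(\Gr_d(A_r), \cO_{\Gr_d(A_r)}) = \bmk$ and $H^0(\Gr_d(A_r), \cU_{(r,d)}^\vee) = A_r^\vee$.

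For (2), which carries the nontrivial content, I would split on whether $\dim_\bmk V_1 \leq d$ or $\dim_\bmk V_1 > d$. When $\dim_\bmk V_1 \leq d$, I would first argue that the full structure map $V_1 \otimes A_r \to V_2$ is already injective: any nonzero kernel element can be written inside $V_1 \otimes \fv'$ for some $\fv' \subseteq A_r$ of dimension at most $\dim_\bmk V_1 \leq d$, and extending $\fv'$ to a $d$-dimensional $\fv$ would contradict fiberwise injectivity. This yields $\dim_\bmk V_2 \geq r \dim_\bmk V_1$ and hence $\rk \cF \geq (\dim_\bmk V_1)(r-d)$. When $\dim_\bmk V_1 > d$ the target bound is $d(r-d)$, and genuine geometry on $\Gr_d(A_r)$ is required. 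Here I would follow the strategy of Ancona--Ottaviani, controlling $\rk \cF$ either by a Chern/Segre class computation on $\Gr_d(A_r)$ or, equivalently, by dualising to $V_2^\vee \otimes \cO \to V_1^\vee \otimes \cU_{(r,d)}^\vee$ and reading off the rank inequality fibrewise.

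Statement (3) is then a rank count: since $\rk \cU_{(r,d)} = d$, the defining sequence of part (1) yields $\rk \widetilde{\Theta}_d(M) = \dim_\bmk M_2 - d \dim_\bmk M_1 = \Delta_M(d)$, and applying (2) with $V_i = M_i$ delivers the asserted inequality.

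The main obstacle is the regime $\dim_\bmk V_1 > d$ in (2). The Ancona--Ottaviani argument originally used cohomological vanishing that holds cleanly in characteristic zero; to recover the bound over an arbitrary algebraically closed field one must either give a direct intersection-theoretic proof on $\Gr_d(A_r)$ via Schubert calculus, which is characteristic-free, or supplement the characteristic-zero argument with a separate analysis ruling out exceptional low-rank Steiner bundles in small characteristic. Making this step characteristic-uniform is where I would expect the bulk of the technical work to lie.
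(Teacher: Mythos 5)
Your sketches of (1) and of the reduction of (3) to (1)+(2) are sound, and your argument for (2) in the regime $\dim_\bmk V_1 \leq d$ is correct and complete. The problem is that the entire nontrivial content of the theorem sits in the regime $\dim_\bmk V_1 > d$, and there your proposal contains no argument: you name two possible strategies (a Schubert-calculus/Chern-class computation, or ``dualising and reading off the rank inequality fibrewise'') without executing either, and the second one cannot work as stated. Dualising gives the exact sequence $0 \to \cF^\vee \to V_2^\vee \otimes \cO \to V_1^\vee \otimes \cU_{(r,d)}^\vee \to 0$, and fibrewise this only recomputes $\rk \cF = \dim_\bmk V_2 - d\dim_\bmk V_1 \geq 0$; the bound $\rk \cF \geq d(r-d) = \dim \Gr_d(A_r)$ is a genuinely global statement about globally generated bundles on the Grassmannian and cannot be extracted fibre by fibre. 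You have correctly located the hard step and the characteristic issue, but locating a gap is not the same as closing it.

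It is worth comparing this with what the paper actually does. Theorem~\ref{2.2} itself is imported wholesale from \cite{BF24} (with (2) elaborating on \cite{AM15}), so the paper gives no geometric proof either; but it then supplies, in \cref{2.13}, an \emph{alternative} proof of (2) and (3) that bypasses the Grassmannian geometry entirely. The route is: translate relative $d$-projectivity into the equal socle property via the shift functor (\cref{2.5}), characterize both classes by the absence of subrepresentations of prescribed small dimension vectors (\cref{2.9}), and then decide existence of such general subrepresentations with the Crawley-Boevey--Schofield criterion in the simplified form of \cref{2.8}. Since that criterion is valid over any algebraically closed field, the characteristic-uniformity you flag as the main obstacle is obtained for free. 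If you want a self-contained proof of (2) in arbitrary characteristic, this representation-theoretic detour is the path of least resistance; the geometric route you propose would require you to actually carry out the Ancona--Ottaviani degeneracy-locus argument and verify it is characteristic-free, which you have not done.
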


We record direct consequences of \cref{2.2} that will be needed later on when we study elementary representations.

\begin{corollary}\label{2.3} Let $M \in  \rep_{\inj}(K_r,d) \coloneqq D_{K_r}(\rep_{\proj}(K_r,d))$, then  
\[-\nabla_M(d) \geq \min \{d(r-d),\dim_\bmk M_2(r-d)\}.\]
\end{corollary}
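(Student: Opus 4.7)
The plan is to reduce the statement to \cref{2.2}(3) via the standard duality $D_{K_r} \colon \rep(K_r) \to \rep(K_r)$. By the definition of $\rep_{\inj}(K_r,d)$, any $M \in \rep_{\inj}(K_r,d)$ can be written as $M = D_{K_r}(N)$ for some $N \in \rep_{\proj}(K_r,d)$, and $D_{K_r}$ swaps the roles of the two vertices, so that $\dim_\bmk M_1 = \dim_\bmk N_2$ and $\dim_\bmk M_2 = \dim_\bmk N_1$.

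First I would unpack the quantity $-\nabla_M(d)$ using this identification:
\[
-\nabla_M(d) = \dim_\bmk M_1 - d \dim_\bmk M_2 = \dim_\bmk N_2 - d \dim_\bmk N_1 = \Delta_N(d).
\]
Then I would invoke \cref{2.2}(3) applied to $N \in \rep_{\proj}(K_r,d)$, which yields
\[
\Delta_N(d) \geq \min\{d(r-d),\, \dim_\bmk N_1 \cdot (r-d)\} = \min\{d(r-d),\, \dim_\bmk M_2 \cdot (r-d)\},
\]
and combining the two displays gives the desired inequality.

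There is no real obstacle here since the work was already done in \cref{2.2}; the only thing to verify carefully is the compatibility of $D_{K_r}$ with the vertex swap, which is immediate from its definition as the standard $\bmk$-linear duality composed with the isomorphism $K_r^{\op} \cong K_r$. If desired, one could alternatively derive the corollary directly (without passing through duality) by working with the injective counterpart of the construction in \cref{2.2}, replacing the universal subbundle $\cU_{(r,d)}$ by the universal quotient bundle on $\Gr_{r-d}(A_r)$, but the dualization argument above is the shortest route.
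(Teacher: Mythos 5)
Your proposal is correct and is essentially the paper's own argument: the paper also writes the dual representation (it uses $N = D_{K_r}(M)$ directly, which lies in $\rep_{\proj}(K_r,d)$ since $D_{K_r}$ is an involution), identifies $-\nabla_M(d)$ with $\Delta_N(d)$ via the vertex swap, and applies \cref{2.2}(3). No gaps.
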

\begin{proof}
Since $M \in \rep_{\inj}(K_r,d)$, we have $D_{K_r}(M) \in \rep_{\proj}(K_r,d)$ and therefore
\begin{align*}
    -\nabla_M(d) &= \dim_\bmk M_1 - d \dim_\bmk M_2 =\Delta_{D_{K_r}(M)}(d) \\
    &\geq \min \{d(r-d),\dim_\bmk (D_{K_r}(M))_1(r-d)\} = \min \{d(r-d),\dim_\bmk M_2(r-d)\}.
\end{align*} 
\end{proof}

\begin{corollary}\label{2.4} 
The following statements hold.
\begin{enumerate}
    \item Let $M \in \rep_{\proj}(K_r,d)$ with $\dim_\bmk M_1 \leq d$, then $M$ is projective.
    \item Let $M \in \rep_{\inj}(K_r,d)$ with $\dim_\bmk M_2 \leq d$, then $M$ is injective.
\end{enumerate}
\end{corollary}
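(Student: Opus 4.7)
The plan is to handle (1) by stripping off the semisimple summand at vertex $2$ and then comparing dimensions, and to deduce (2) from (1) via duality.

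First I would observe that $\rep_{\proj}(K_r,d)$ is closed under direct summands: if $M = M' \oplus M''$ then $\psi_M|_{\fv \otimes M_1}$ decomposes as $\psi_{M'}|_{\fv \otimes M'_1} \oplus \psi_{M''}|_{\fv \otimes M''_1}$, and injectivity of a direct sum implies injectivity of each factor. So I would write $M \cong M' \oplus P(2)^c$ with $c \geq 0$ maximal; then $M' \in \rep_{\proj}(K_r,d)$ with $\dim_\bmk M'_1 = \dim_\bmk M_1 \leq d$, and $M'$ has no $P(2)$-summand. Since $P(2) = S_2$ is projective, this last condition is equivalent to the surjectivity of the structure map $\psi_{M'} \colon A_r \otimes_\bmk M'_1 \to M'_2$: a nontrivial cokernel would produce a surjection $M' \twoheadrightarrow S_2^e$ for some $e \geq 1$, which would split.

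Next I would apply \cref{2.2}(3) to $M'$. As $\dim_\bmk M'_1 \leq d$, the bound reduces to $\Delta_{M'}(d) \geq \dim_\bmk M'_1 \cdot (r-d)$, i.e.\ $\dim_\bmk M'_2 \geq r \dim_\bmk M'_1$. On the other hand, surjectivity of $\psi_{M'}$ forces $\dim_\bmk M'_2 \leq \dim_\bmk (A_r \otimes_\bmk M'_1) = r \dim_\bmk M'_1$. Hence $\psi_{M'}$ is an isomorphism, and setting $a \coloneqq \dim_\bmk M'_1$, the pair $(\id_{M'_1}, \psi_{M'})$ defines an isomorphism $P(1)^a \xrightarrow{\sim} M'$ via the concrete description $P(1) = (\bmk, A_r, \gamma_i \otimes \id)$. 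Thus $M \cong P(1)^a \oplus P(2)^c$ is projective.

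For (2) I would invoke the standard duality $D_{K_r} \colon \rep(K_r) \to \rep(K_r)$, which exchanges $\rep_{\inj}(K_r,d)$ with $\rep_{\proj}(K_r,d)$ and swaps the dimensions at the two vertices. The hypothesis $\dim_\bmk M_2 \leq d$ on $M \in \rep_{\inj}(K_r,d)$ then becomes the hypothesis of (1) for $D_{K_r}(M)$; applying (1) gives that $D_{K_r}(M)$ is projective, so $M \cong D_{K_r}(D_{K_r}(M))$ is injective. The only non-routine points are checking closure of $\rep_{\proj}(K_r,d)$ under summands (to make the preliminary reduction legitimate) and squeezing $\dim_\bmk M'_2$ between the two dimension bounds; neither is expected to cause difficulty.
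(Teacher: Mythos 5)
Your argument is correct and follows essentially the same route as the paper: both reduce to the part of $M$ with no $S_2$-summand, invoke \cref{2.2}(3) (with $\dim_\bmk M_1 \leq d$ forcing the minimum to be $\dim_\bmk M_1(r-d)$) to get $\dim_\bmk M'_2 \geq r\dim_\bmk M'_1$, and combine this with the reverse inequality coming from the structure map to identify $M'$ with a power of $P(1)$, with (2) by duality. The paper phrases the last step via the minimal projective presentation $0 \to P_0^{r\dim_\bmk N_1 - \dim_\bmk N_2} \to P_1^{\dim_\bmk N_1} \to N \to 0$ of an indecomposable summand rather than via bijectivity of $\psi_{M'}$, but this is only a cosmetic difference.
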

\begin{proof}
\begin{enumerate}
    \item Let $N$ be an indecomposable direct summand of $M$ not isomorphic to $P_0$. Then $N_1 \neq 0$, $N \in \rep_{\proj}(K_r,d)$ and $\dim_\bmk N_1 \leq d$.
We have 
\[ \dim_\bmk N_2 - d \dim_\bmk N_1 =  \Delta_N(d) \geq \min\{d(r-d),\dim_\bmk N_1(r-d)\} = \dim_\bmk N_1(r-d)\]
and conclude $\dim_\bmk N_2 \geq r \dim_\bmk N_1$. Since $\dim_\bmk N_1 \neq 0$, we also have a projective resolution
\[ 0 \lra P_0^{r \dim_\bmk N_1- \dim_\bmk N_2} \lra P_1^{\dim_\bmk N_1} \lra N \lra 0\]
and conclude $r \dim_\bmk N_1- \dim_\bmk N_2 = 0$ as well as $N \cong P_1^{\dim_\bmk N_1}$.
\item This follows from duality.
\end{enumerate}
\end{proof}

\subsection{The equal socle property and connections to relative projective representations}\label{Section2.2}

Constant rank type modules have been defined and studied in \cite{CFP12} in the context of elementary abelian $p$-groups over fields of characteristic $p$ as a generalization of constant Jordan type modules. Inspired by these considerations, representations with the equal socle type have been introduced in \cite{Bis20} for Kronecker representations over fields of arbitrary characteristic. It is the aim of this section characterize dimension vectors that admit representations with the equal socle property. This description plays a crucial role in \cref{Section:3}, when we determine the dimension vectors admitting an elementary representation.\\
We obtain these restrictions with the help of a recent result of Reineke in the framework of generic representations for Kronecker representations.
%As one application we give a new proof of \cref{2.2}(2),(3).
Let $M \in \rep(K_r)$ and $\fv \in \Gr_d(A_r)$. Given $a = \sum^r_{i=1} \alpha_i \gamma_i \in A_r$ we denote by $a_M \colon M_1 \lra M_2$ the $\bmk$-linear map
\[ a_M \colon M_1 \lra M_2 \ ; \ m \mapsto a.m \coloneqq \sum^r_{i=1} \alpha_i M(\gamma_i)(m).\]

\begin{Definition}(cf. \cite[2.3]{Bis20}). 
A representation $M \in \rep(K_r)$ has the {\it equal $d$-socle property}, provided $\{0\} = \bigcap_{a \in \fv} \ker a_{M}$
for all $\fv \in \Gr_d(A_r)$.
\end{Definition}

We note that $\rep_{\esp}(K_r,d)$ and $\rep_{\proj}(K_r,d)$ are closed under subrepresentations and direct sums. Relative projective representations and representations with the equal socle property are closely related: 

\begin{Lemma} \label{2.5} 
Let $N \in \rep(K_r)$ and $1 \leq d < r$. The following statements are equivalent.
\begin{enumerate}
    \item $N \in \rep_{\proj}(K_r,d)$.
    \item $\sigma_{K_r}(N) \in \rep_{\esp}(K_r,r-d)$.
\end{enumerate}
\end{Lemma}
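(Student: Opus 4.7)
The plan is to unpack both conditions in coordinates and reduce the equivalence to a single linear-algebra identification. Writing $M = \sigma_{K_r}(N)$, we have $M_1 = \ker \psi_N \subseteq A_r \otimes N_1$, $M_2 = N_1$, and $M(\gamma_i) = \pi_i|_{\ker \psi_N}$. First I would compute, for $\xi = \sum_{i=1}^r \gamma_i \otimes m_i \in \ker \psi_N$ and $a = \sum_i \alpha_i \gamma_i \in A_r$, that
\[ a_M(\xi) \;=\; \sum_{i=1}^r \alpha_i m_i \;=:\; \varphi_\xi(a), \]
so that $\xi \in \ker a_M$ for all $a \in \fu$ is the same as requiring the linear map $\varphi_\xi \colon A_r \to N_1$ to vanish on $\fu$.

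Next I would equip $A_r$ with the non-degenerate symmetric bilinear form $b$ making $(\gamma_1,\ldots,\gamma_r)$ an orthonormal basis, and establish the key identification
\[ \{\xi \in A_r \otimes N_1 \colon \varphi_\xi|_\fu = 0\} \;=\; \fu^\perp \otimes N_1 \qquad \text{for every } \fu \in \Gr_{r-d}(A_r), \]
where $\fu^\perp \in \Gr_d(A_r)$. The inclusion ``$\supseteq$'' is immediate from the calculation $\varphi_{v \otimes n}(a) = b(v,a)\,n$ for $v \in \fu^\perp$, $a \in \fu$, and equality then follows from a dimension count: both sides have dimension $d \dim_\bmk N_1$, the left being the kernel of the restriction map $A_r \otimes N_1 \twoheadrightarrow \fu^\ast \otimes N_1$.

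Putting the two steps together, for $\fu \in \Gr_{r-d}(A_r)$ with $\fv \coloneqq \fu^\perp \in \Gr_d(A_r)$,
\[ \bigcap_{a \in \fu} \ker a_M \;=\; (\fv \otimes N_1) \cap \ker \psi_N \;=\; \ker\!\bigl(\psi_N|_{\fv \otimes N_1}\bigr). \]
Since $\fu \mapsto \fu^\perp$ is a bijection $\Gr_{r-d}(A_r) \to \Gr_d(A_r)$, the vanishing of every such intersection (i.e.\ the equal $(r-d)$-socle property for $M$) is equivalent to the injectivity of $\psi_N|_{\fv \otimes N_1}$ for every $\fv \in \Gr_d(A_r)$ (i.e.\ $N \in \rep_{\proj}(K_r,d)$), finishing the proof.

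I do not anticipate a serious obstacle; the argument is essentially linear-algebraic bookkeeping. The only point requiring mild care is that the identification in the middle paragraph uses the basis $\gamma_1,\ldots,\gamma_r$ to define $\fu^\perp$, but this non-canonicality is harmless because only the surjectivity of the resulting map $\Gr_{r-d}(A_r) \to \Gr_d(A_r)$ is needed to conclude.
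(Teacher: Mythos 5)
Your proof is correct. The core computation is the same one the paper uses -- namely that for $\xi=\sum_i\gamma_i\otimes m_i\in\ker\psi_N$ and $a=\sum_i\alpha_i\gamma_i$ one has $a_{\sigma_{K_r}(N)}(\xi)=\sum_i\alpha_i m_i$ -- but you organize it differently, and more cleanly. The paper proves both implications by contrapositive: from a nonzero element of $\ker(\psi_N|_{\fv\otimes N_1})$ it manufactures, via an explicit choice of bases, \emph{some} $(r-d)$-dimensional subspace $\fu$ annihilating a nonzero element of $\sigma_{K_r}(N)_1$, and conversely; the subspaces produced are not canonical and the two directions are carried out separately. You instead fix a nondegenerate form on $A_r$ and establish the exact identity $\bigcap_{a\in\fu}\ker a_{\sigma_{K_r}(N)}=\ker\bigl(\psi_N|_{\fu^{\perp}\otimes N_1}\bigr)$ for every $\fu\in\Gr_{r-d}(A_r)$, after which both implications follow at once from surjectivity of $\fu\mapsto\fu^{\perp}$. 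This buys a shorter, symmetric argument that also dispenses with the paper's preliminary reduction to representations without $P_0$-summands (your computation is insensitive to them, since $A_r\otimes N_1$ does not see the vertex-$2$ part). Your dimension count for the identity (kernel of the surjection $A_r\otimes N_1\twoheadrightarrow\fu^{\ast}\otimes N_1$ versus $\dim_\bmk(\fu^{\perp}\otimes N_1)=d\dim_\bmk N_1$) is correct, and the non-canonicality of $\perp$ is indeed harmless for the reason you state.
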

\begin{proof}
We define $M \coloneqq \sigma_{K_r}(N)$. Clearly, $P_0 \in \rep_{\proj}(K_r,d)$ and $\sigma_{K_r}(P_0) = \{0\} \in \rep_{\esp}(K_r,r-d)$. Since the involved categories are closed under direct sums and summands, we may assume that $N$ does not have $P_0$ as a direct summand.

(1) $\implies$ (2). We assume that $N \not\in \rep_{\proj}(K_r,d)$. By definition we find $\fv \in \Gr_{d}(A_r)$ such that
\[ \psi_N|_{\fv \otimes_{\bmk} N_1} \colon \fv \otimes_{\bmk}  N_1 \lra N_2 ; a \otimes n \mapsto a.n\]
is not injective. Let $(a_1,\ldots,a_{d})$ be a basis of $\fv$ and $x = \sum^{d}_{j=1} a_j \otimes n_j$ be a non-zero element in $\ker \psi_N|_{\fv \otimes N_1}$. We write $a_j = \sum^r_{i=1} \beta_{ij} \gamma_i$ for $1 \leq j \leq d$ and set $n_i' \coloneqq \sum^d_{j=1} \beta_{ij} n_j$ for $1 \leq i \leq r$. By definition we have 
\[ x = \sum^{d}_{j=1} \sum^r_{i=1} \beta_{ij} \gamma_i \otimes n_j = \sum^r_{i=1} \gamma_i \otimes  \sum^{d}_{j=1}  {\beta_{ij} n_j}= \sum^r_{i=1} \gamma_i \otimes n_i'.\]
Recall that \[M_2 = N_1, M_1 = \ker (N_1^r \to N_2 ; (y_i)_{1 \leq i \leq r} \mapsto \sum^r_{i=1} \gamma_i.y_i)\] and $\gamma_j.((y_i)_{1 \leq i \leq r}) = M(\gamma_j)((y_i)_{1 \leq i \leq r}) = y_j$ for $j \in \{1,\ldots,r\}$. We have
\[ 0 =   \psi_N|_{\fv \otimes N_1}(x) =  \sum^r_{i=1} \gamma_i.n_i',\]
which shows $m \coloneqq (n_i')  \in M_1 \setminus \{0\}$. Let $A \coloneqq \{ \delta \in \bmk^r \mid \sum^r_{i=1} \delta_i n_i' = 0\}$. Since $\sum^r_{i=1} \bmk n_i' \subseteq \sum^d_{i=1} \bmk n_i$, we have $\dim_\bmk A \geq r - d$. We fix a subspace $B \subseteq A$ of dimension $r-d$. Let $\fu \coloneqq \{ \sum^r_{i=1} \delta_i \gamma_i \mid \delta \in B\} \in \Gr_{r-d}(A_r)$. Let $a = \sum^r_{i=1} \delta_i \gamma_i \in \fu$, then $a.m = \sum^r_{i=1} \delta_i \gamma_i.m = \sum^r_{i=1} \delta_i n_i' = 0$. Hence $0 \neq m \in \bigcap_{a \in \fu} \ker a_M$ and $M \not\in \rep_{\esp}(K_r,r-d)$.

(2) $\implies$ (1). Assume that $M \not\in \rep_{\esp}(K_r,r-d)$. We find $\fu \in \Gr_{r-d}(A_r)$ and $0 \neq m \in \bigcap_{a \in \fu} \ker a_M \setminus \{0\}$. By definition we have $m = (n_1,\ldots,n_r) \in N_1^r \setminus \{0\}$ and $0 = \sum^r_{i=1} \gamma_i.n_i$. Let $a = \sum^r_{i=1} \lambda_i \gamma_i \in \fu$, then $0 = a_M(m) = \sum^r_{i=1} \lambda_{i} M(\gamma_i)(m) = \sum^r_{i=1} \lambda_{i} n_i$.
Hence $\{ \delta \in \bmk^r \mid \sum^r_{i=1} \delta_i n_i = 0\}$ is a vector space of dimension at least $r-d$ and $\sum^r_{i=1}\bmk n_i$ a vector space of dimension at most $d$. Let $(x_1,\ldots,x_m)$ be a basis of $\sum^r_{i=1} \bmk n_i$. We write $n_i = \sum^m_{j=1}  \lambda_{ij} x_j$ for $1 \leq i \leq r$ and let $b_l \coloneqq \sum^r_{j=1} \lambda_{jl} \gamma_j$ for $1 \leq l \leq m$. Let $\fv \in \Gr_d(A_r)$ such that $\sum^m_{l=1} \bmk b_l \subseteq \fv$. We have $0 \neq \sum^m_{l=1} b_l \otimes x_l \in \fv \otimes_{\bmk} N_1$ and
\begin{align*}
    \psi_{N}|_{\fv \otimes N_1}(\sum^m_{l=1} b_l \otimes x_l) &= \sum^m_{l=1} (\sum^r_{j=1}  \lambda_{jl} \gamma_j).x_l =  \sum^r_{j=1} \gamma_j.\sum^m_{l=1}  \lambda_{jl} x_l  = \sum^r_{j=1} \gamma_j.n_j = 0.
\end{align*}
Hence $N \not\in \rep_{\proj}(K_r,d)$.
\end{proof}

\subsection{Generic representations and applications}

Let $(V_1,V_2)$ be a pair vector spaces. We denote by $\rep(K_r;V_1,V_2) \coloneqq \Hom_\bmk(V_1,V_2)^r$ the affine variety of representations of $K_r$ on $(V_1,V_2)$. Given $\cS \subseteq \rep(K_r)$ and $\cT \subseteq \rep(K_r;V_1,V_2)$ we define \[ \cS \cap \cT \coloneqq \cT \cap \cS \coloneqq \{ g \in \cT  \mid (V_1,V_2,g) \in \cS\} \subseteq \rep(K_r;V_1,V_2).\]
Let $\bd \coloneqq (\dim_\bmk V_1,\dim_\bmk V_2)$. For $\be \leq \bd \in \NN^2_0$ (componentwise) we let $\rep(K_r;V_1,V_2)_{\be}$ be the Zariski-closed subset (cf. \cite[3.1]{Sch92}) of $\rep(K_r;V_1,V_2)$ consisting of all representations admitting a subrepresentation of dimension vector $\be$. We write $\be \hookrightarrow \bd$ if $\rep(K_r;V_1,V_2)_{\be} = \rep(K_r;V_1,V_2)$. Otherwise we write $\be \not \hookrightarrow \bd$. Schofield gave in \cite{Sch92} a criterion (in a way more general setting) in characteristic zero to decide whether $\be \hookrightarrow \bd$ holds. Crawley-Boevey extended this criterion in \cite{CB96} to positive characteristic. The statemenent in the Kronecker setting reads as follows:

\begin{Theorem}[Crawley-Boevey, Schofield]\label{2.6}
We have $\be \hlra \bd$ if and only if $\langle \bff,\bd-\be\rangle_r \geq 0$ for all $\bff \hlra \be$.
\end{Theorem}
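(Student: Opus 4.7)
Since this is a quoted result of Schofield (characteristic zero) and Crawley-Boevey (arbitrary characteristic), the paper will almost certainly just cite it; what follows is a sketch of the classical incidence-variety strategy specialised to the Kronecker setting.

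The starting point is Schofield's incidence variety. Fix $\bd = (\dim_\bmk V_1, \dim_\bmk V_2)$ and consider
\[
Z \coloneqq \{(U_1,U_2,M) \in \Gr_{\be_1}(V_1) \times \Gr_{\be_2}(V_2) \times \rep(K_r;V_1,V_2) \mid M(\gamma_i)(U_1) \subseteq U_2 \text{ for all } i\},
\]
with projections $\pi$ to the Grassmannian factor and $p$ to $\rep(K_r;V_1,V_2)$. The map $\pi$ is a Zariski-locally trivial vector bundle: the fibre over $(U_1,U_2)$ consists of all tuples $(M(\gamma_i))$ preserving the inclusion, which, after choosing complements $V_1 = U_1 \oplus W_1$ and $V_2 = U_2 \oplus W_2$, is an affine space of dimension $r\bigl[\be_1\be_2 + (\bd_1-\be_1)\bd_2\bigr]$. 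Hence $Z$ is irreducible and a direct count gives
\[
\dim Z = \dim\rep(K_r;V_1,V_2) + \langle \be, \bd-\be\rangle_r.
\]
Since the Grassmannian factor is proper, $p$ is proper, so its image is exactly the closed subset $\rep(K_r;V_1,V_2)_{\be}$, and $\be \hookrightarrow \bd$ is equivalent to $p$ being surjective.

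The second step is to convert surjectivity of $p$ into a numerical condition. The fibre of $p$ over $M$ is the Grassmannian $\Gr_{\be}(M)$ of sub-representations of $M$ of dimension vector $\be$. Using upper semi-continuity of fibre dimension and irreducibility of $Z$, surjectivity of $p$ is equivalent to the generic fibre having the expected dimension $\dim Z - \dim\rep(K_r;V_1,V_2) = \langle \be,\bd-\be\rangle_r$. For a hereditary algebra one has $\dim\Hom(X,Y) - \dim\Ext^1(X,Y) = \langle\dimu X, \dimu Y\rangle_r$, and comparing the generic fibre dimension of $p$ with the tangent space description of $\Gr_{\be}(M)$ inside $\Hom(U_1,V_2/U_2)^r$ identifies the codimension of $\rep(K_r;V_1,V_2)_{\be}$ with $\ext(\be,\bd-\be)$, the generic value of $\dim\Ext^1$. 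Thus $\be \hookrightarrow \bd$ if and only if $\ext(\be,\bd-\be) = 0$.

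The final and technically most delicate step is the generic Ext formula
\[
\ext(\be,\bd-\be) = \max\{-\langle \bff,\bd-\be\rangle_r \mid \bff \hookrightarrow \be\},
\]
which immediately converts $\ext(\be,\bd-\be)=0$ into the stated criterion. I would prove this by induction on $\be_1+\be_2$: any representation $X$ of dimension $\be$ is either generic (and then $\dim\Ext^1(X,Y) = -\langle\be,\bd-\be\rangle_r + \dim\Hom(X,Y)$ is controlled) or it has a proper sub-representation $X'$ of some dimension $\bff \hookrightarrow \be$, and the short exact sequence $0 \to X' \to X \to X/X' \to 0$ relates $\Ext^1(X,Y)$ to $\Ext^1(X',Y)$ and $\Ext^1(X/X',Y)$; taking maxima over $\bff \hookrightarrow \be$ yields the formula.

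The main obstacle is this last step in positive characteristic. Schofield's original argument \cite{Sch92} uses characteristic-zero-flavoured tools (field of definition arguments for generic modules) that do not transfer directly; Crawley-Boevey's work \cite{CB96} replaces these by characteristic-free manipulations with morphism varieties and generic decompositions, which is precisely why the paper cites both sources. Once this formula is in hand, the equivalence $\be \hookrightarrow \bd \Leftrightarrow \langle\bff,\bd-\be\rangle_r \geq 0$ for all $\bff \hookrightarrow \be$ follows directly.
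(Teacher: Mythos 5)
The paper gives no proof of this statement: it is quoted verbatim from Schofield \cite{Sch92} and Crawley-Boevey \cite{CB96}, exactly as you anticipated, so there is nothing internal to compare your argument against. Your sketch of the external proof --- the incidence variety over the product of Grassmannians, the dimension count $\dim Z = \dim\rep(K_r;V_1,V_2) + \langle \be,\bd-\be\rangle_r$, the identification of the codimension of $\rep(K_r;V_1,V_2)_{\be}$ with the generic $\Ext$, and the recursive formula $\ext(\be,\bd-\be)=\max\{-\langle\bff,\bd-\be\rangle_r \mid \bff\hookrightarrow\be\}$ --- is a fair outline of the standard argument, including the correct attribution of the characteristic-free refinement to Crawley-Boevey.
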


For imaginary roots the statement can be simplified:
\begin{proposition}(see \cite[3.4]{Rei22})\label{2.7}
Assume that $q_r(\bd) \leq 0$. The following statements are equivalent for $\be \in \NN^2_0$ with $\be \leq \bd$.
\begin{enumerate}
    \item $\be \hookrightarrow \bd$.
    \item $\langle \be,\bd-\be\rangle_r \geq 0$.
\end{enumerate}
\end{proposition}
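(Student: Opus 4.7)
The easy direction $(1) \Rightarrow (2)$ falls out of \cref{2.6} for free: since $\bff = \be$ is always a subroot of $\be$, the inequality $\langle \be, \bd - \be \rangle_r \geq 0$ is one of the conditions imposed on $\be \hookrightarrow \bd$, and the hypothesis $q_r(\bd) \leq 0$ plays no role here.

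For the converse $(2) \Rightarrow (1)$, the plan is to appeal to \cref{2.6} once more and verify that $\langle \bff, \bd - \be \rangle_r \geq 0$ for every $\bff \hookrightarrow \be$. The case $\bff = \be$ is hypothesis (2) verbatim, so only proper subroots $\bff \lneq \be$ require attention. Applying \cref{2.6} to the embedding $\bff \hookrightarrow \be$, evaluated at the self-subroot $\bff$, supplies the auxiliary estimate $\langle \bff, \be - \bff \rangle_r \geq 0$. The strategy is then to combine this inequality, the assumption $\langle \be, \bd - \be \rangle_r \geq 0$, and the imaginary condition $q_r(\bd) = \langle \bd, \bd \rangle_r \leq 0$ by expanding the last along the decomposition $\bd = \bff + (\be - \bff) + (\bd - \be)$: the resulting nine bilinear contributions should, after a careful grouping and perhaps an induction on $|\be| \coloneqq e_1 + e_2$ to bootstrap information about strictly smaller subroots, isolate $\langle \bff, \bd - \be \rangle_r$ and pin it below by zero.

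The principal difficulty is the asymmetry of $\langle -, - \rangle_r$: since $\langle \alpha, \beta \rangle_r - \langle \beta, \alpha \rangle_r = r(\alpha_2 \beta_1 - \alpha_1 \beta_2)$, the nine-term expansion of $q_r(\bd)$ does not collapse into a clean symmetric sum, and signed $2 \times 2$ minors formed from the triple $\bff,\ \be - \bff,\ \bd - \be$ intrude and must be tracked throughout. A potentially cleaner route that sidesteps the combinatorial case analysis would be to exploit that $\sigma_r$ preserves both $\langle -, - \rangle_r$ and $q_r$, and that $\sigma_{K_r}^{\pm 1}$ sends generic subrepresentations to generic subrepresentations modulo the exceptional simples: iterating this moves $\bd$ into the fundamental domain $\cF_r$, where the polytope of $\be \leq \bd$ satisfying $(2)$ becomes sufficiently constrained that the required inequality can be verified by direct inspection.
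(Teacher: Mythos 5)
The forward direction is fine: taking $\bff = \be$ in \cref{2.6} gives $\langle \be,\bd-\be\rangle_r \geq 0$, and indeed $q_r(\bd)\leq 0$ plays no role there. The problem is the converse, which is where all the content lies, and there your proposal stops at the level of a plan. You correctly reduce to showing $\langle \bff,\bd-\be\rangle_r\geq 0$ for every $\bff\hookrightarrow\be$, and you correctly identify the obstruction — the antisymmetric part $\langle\bff,\be\rangle_r-\langle\be,\bff\rangle_r=r(e_1f_2-e_2f_1)$ — but you supply no tool to control it. The proposed nine-term expansion of $q_r(\bd)$ along $\bd=\bff+(\be-\bff)+(\bd-\be)$ only ever yields inequalities in the three quantities $\langle\bff,\be-\bff\rangle_r$, $\langle\be,\bd-\be\rangle_r$, $q_r(\bd)$ together with the untamed cross terms; nothing in the proposal pins down the sign of the $2\times 2$ minors, and the hedged ``should, after a careful grouping and perhaps an induction'' is exactly where a proof would have to be. The missing ingredient is structural, not combinatorial: one needs a positivity statement for $r(e_1f_2-e_2f_1)$ valid for \emph{general} subdimension vectors $\bff\hookrightarrow\be$. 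This is what Reineke's argument (which the paper simply cites for \cref{2.7}) and the paper's own adaptation in \cref{2.8} use — there, Schofield's theorem on Schur roots gives $0<\langle\bff,\be\rangle_r-\langle\be,\bff\rangle_r$ for proper $\bff\hookrightarrow\be$, whence $f_2>e_2f_1/e_1$ and a two-line estimate bounds $\langle\bff,\bd-\be\rangle_r$ below by $\tfrac{f_1}{e_1}\langle\be,\bd-\be\rangle_r\geq 0$. No such input appears in your proposal.

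The alternative route via $\sigma_r$ is not a repair. The fundamental domain $\cF_r$ contains infinitely many dimension vectors, so ``verified by direct inspection'' is not a finite check; and the claim that $\sigma_{K_r}^{\pm1}$ takes generic subrepresentations to generic subrepresentations ``modulo the exceptional simples'' is asserted without justification and is itself a nontrivial statement about how the relation $\be\hookrightarrow\bd$ transforms under reflection. As written, neither branch of the proposal closes the implication $(2)\Rightarrow(1)$.
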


We adapt the proof of Reineke to show:

\begin{proposition}\label{2.8}
The following statements are equivalent for $\be \in \NN^2_0$ with $\be \leq \bd$ and $q_r(\be) \leq 1$.
\begin{enumerate}
    \item $\be \hlra \bd$.
    \item $\langle \be,\bd - \be \rangle_r \geq 0$.
\end{enumerate}
\end{proposition}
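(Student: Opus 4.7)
The direction $(1) \Rightarrow (2)$ is immediate from Theorem 2.6 by plugging in the tautological embedding $\be \hookrightarrow \be$, which forces $\langle \be, \bd - \be \rangle_r \geq 0$. For the converse, I would apply Theorem 2.6 again: it suffices to verify $\langle \bff, \bd - \be \rangle_r \geq 0$ for every $\bff \hookrightarrow \be$. Since the hypothesis now restricts $q_r(\be)$ rather than $q_r(\bd)$, the natural split is according to the value of $q_r(\be)$.

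In the imaginary case $q_r(\be) \leq 0$, Proposition 2.7 applies to the pair $\bff \hookrightarrow \be$ and yields $\langle \bff, \be - \bff \rangle_r \geq 0$. The bilinearity identity
\[
\langle \bff, \bd - \be \rangle_r = \langle \be, \bd - \be \rangle_r - \langle \be - \bff, \bd - \be \rangle_r
\]
reduces the task to bounding $\langle \be - \bff, \bd - \be \rangle_r$ from above by $\langle \be, \bd - \be \rangle_r$. The key geometric input is the slope range $1/L_r \leq \be_2/\be_1 \leq L_r$ forced by $q_r(\be) \leq 0$ (an observation in the spirit of Corollary 1.2); combined with the explicit form $\langle x, y \rangle_r = x_1 y_1 + x_2 y_2 - r x_1 y_2$ and the componentwise non-negativity of $\bff$, $\be - \bff$, and $\bd - \be$, this should close the inequality.

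If instead $q_r(\be) = 1$, then by Kac's Theorem 1.1 there is, up to isomorphism, a unique indecomposable $M$ of dimension $\be$, either a preprojective $P_i$ or a preinjective $I_i$. The sub-dimension vectors $\bff \hookrightarrow \be$ are then exactly the dimension vectors of subrepresentations of $M$; an induction on $i$ using the shift functor $\sigma_{K_r}^{\pm 1}$, which preserves both $\langle -, - \rangle_r$ (via $\sigma_r^{\pm 1}$) and the subrepresentation structure outside the exceptional simples $S_1, S_2$, reduces the required inequality to the trivial base case $\be \in \{(0,1),(1,0)\}$, which is a direct computation.

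I expect the main obstacle to be the imaginary case: since $\langle \cdot, \cdot \rangle_r$ is non-symmetric, the two bilinear inequalities $\langle \bff, \be - \bff \rangle_r \geq 0$ and $\langle \be, \bd - \be \rangle_r \geq 0$ cannot be combined to give the target by pure linear algebra, so the slope restriction available only for imaginary $\be$ — and not for general dimension vectors — must genuinely enter the argument.
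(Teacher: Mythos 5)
Your direction $(1)\Rightarrow(2)$ is exactly the paper's. For the converse your skeleton is also the right one: by \cref{2.6} it suffices to prove $\langle \bff,\bd-\be\rangle_r\geq 0$ for every $\bff\hlra\be$, and the only real content is the slope inequality $e_1f_2\geq e_2f_1$, because once that is available the explicit form of the bilinear form together with $d_2-e_2\geq 0$ gives
\[
\langle \bff,\bd-\be\rangle_r=f_1\bigl(d_1-e_1-r(d_2-e_2)\bigr)+f_2(d_2-e_2)\;\geq\;\tfrac{f_1}{e_1}\,\langle\be,\bd-\be\rangle_r\;\geq\;0 .
\]
But your route to the slope inequality has genuine gaps. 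In the imaginary case, the statement you actually need is that $\langle\bff,\be-\bff\rangle_r\geq 0$ together with $q_r(\be)\leq 0$ forces $e_1f_2\geq e_2f_1$; this is true, but ``this should close the inequality'' is not a proof. It requires a cone/mediant argument: assuming $f_2/f_1<e_2/e_1$, one uses $\langle\bff,\be-\bff\rangle_r\geq 0$ and $\tfrac{1}{L_r}=r-L_r$ to show that $\bff$ and $\be-\bff$ both have slope on the same side outside $[\tfrac{1}{L_r},L_r]$, whence so does their sum $\be$, contradicting $q_r(\be)\leq 0$. That is a lemma with several sub-cases, not a remark. The real case $q_r(\be)=1$ is the weaker part: the proposed induction on $P_i$, $I_i$ via $\sigma_{K_r}^{\pm1}$ is not set up at all — you would need to track how $\bd$, $\be$ and $\bff$ all transform, control which subrepresentations of the shifted module come from subrepresentations of the original one (the shift is only left exact and only an equivalence away from $S_1$, $S_2$), and the target inequality itself changes under the shift. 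Nothing in the proposal indicates this can be pushed through.

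The paper avoids the case split entirely. Since $q_r(\be)\leq 1$, the vector $\be$ is a Schur root (\cite[1.2.2]{BF24}), and Schofield's theorem on general subrepresentations of Schur roots \cite[6.1]{Sch92} gives $0<\langle\bff,\be\rangle_r-\langle\be,\bff\rangle_r=r(e_1f_2-e_2f_1)$ for every proper nonzero $\bff\hlra\be$ in one stroke; the displayed estimate then finishes the proof. That single citation replaces both of your unexecuted sub-arguments. If you want to keep your structure, you must write out the mediant argument in full for $q_r(\be)\leq 0$ and find a genuinely workable treatment of $q_r(\be)=1$; otherwise adopt the Schur-root shortcut.
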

\begin{proof}
(1) $\implies$ (2). Apply \cref{2.6} for $\bff = \be$.

(2) $\implies$ (1). Let $\bff \hlra \be$. In view of \cref{2.6} it suffices to show that $\langle \bff,\bd-\be \rangle_r \geq 0$. 
Since $q_r(\be) \leq 1$ holds, $\be$ is a Schur root (see for example \cite[1.2.2]{BF24}). Hence \cite[6.1]{Sch92} implies $0 <  \langle \bff,\be\rangle_r - \langle \be,\bff\rangle_r = r(e_1 f_2 - e_2 f_1)$. In particular, $e_1 \neq 0$ and $f_2 > \frac{e_2 f_1}{e_1}$. We conclude with $d_2 - e_2 \geq 0$
\begin{align*}
    \langle \bff,\bd-\be\rangle_r &= f_1(d_1-e_1-r (d_2 - e_2)) + f_2(d_2-e_2) \geq f_1(d_1-e_1-r (d_2 - f_2)) + \frac{f_1 e_2}{e_1}(d_2-e_2)\\
    &= \frac{f_1}{e_1} \langle \be,\bd-\be\rangle_r \geq 0.
\end{align*} 
\end{proof}

In order to use \cref{2.8}, we give a characterization of $\rep_{\esp}(K_r,d)$ and $\rep_{\proj}(K_r,d)$ in terms of abscence of subrepresentations:
%In the following we show that $\rep_{\esp}(K_r,d) \cap \rep(K_r;V_1,V_2) \subseteq \rep(K_r;V_1,V_2)$ is open and that $\rep_{\esp}(K_r,d) \cap \rep(K_r;V_1,V_2) \neq \emptyset$ implies $V_1 = 0 $ or $\nabla_{(V_1,V_2)}(d) \geq \Delta(d) = d(r-d)$.

\begin{proposition}\label{2.9}
Let $M \in \rep(K_r)$. 
\begin{enumerate}
    \item The following statements are equivalent.
    \begin{enumerate}
        \item[(i)] $M \not\in \rep_{\esp}(K_r,d)$.
        \item[(ii)] There exists $a \in \{0,\ldots,r-d\}$ and a subrepresentation $X \subseteq M$ with dimension vector $(1,a)$.
    \end{enumerate}
    \item  The following statements are equivalent.
   \begin{enumerate}
     \item[(i)] $M \not\in \rep_{\proj}(K_r,d)$.
     \item[(ii)] There exist $a \in \{1,\ldots,d\}$, $a' \in \{0,\ldots,ar-1\}$ and a subrepresentation $X \subseteq M$ with dimension vector $(a,a')$.
 \end{enumerate}
\end{enumerate}
\end{proposition}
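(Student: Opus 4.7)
The plan is to prove both equivalences directly by translating between \emph{structural witnesses} (a vector $m \in M_1$ annihilated by a $d$-dimensional subspace of $A_r$ in part~(1); a nonzero kernel element of $\psi_M|_{\fv \otimes M_1}$ in part~(2)) and subrepresentations of the prescribed dimension vectors. In each part, both directions reduce to one elementary construction paired with one elementary dimension count, and the two parts are structurally parallel. No use of \cref{2.5} is needed.

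For part~(1), I would argue as follows. Given $\fv \in \Gr_d(A_r)$ and $0 \neq m \in \bigcap_{a \in \fv} \ker a_M$, the pair $X_1 \coloneqq \bmk m$ and $X_2 \coloneqq A_r \cdot m$ is a subrepresentation of $M$; since the map $A_r \to M_2, \ b \mapsto b.m$ has kernel containing $\fv$, its image $X_2$ has dimension at most $r - d$. Conversely, given a subrepresentation with dimension vector $(1,a)$, $a \leq r - d$, any $0 \neq m \in X_1$ has annihilator in $A_r$ of codimension at most $a \leq r - d$, so the annihilator contains some $\fv \in \Gr_d(A_r)$, which exhibits $M \notin \rep_{\esp}(K_r, d)$.

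For part~(2), the forward direction proceeds analogously: take a nonzero $x \in \ker \psi_M|_{\fv \otimes M_1}$, write $x = \sum_j a_j \otimes m_j$ in a basis of $\fv$, and let $W \subseteq M_1$ be the span of the $m_j$, so $1 \leq a \coloneqq \dim W \leq d$. Then $X_1 \coloneqq W$, $X_2 \coloneqq A_r \cdot W$ is a subrepresentation, and rewriting $x$ in a basis of $W$ keeps the $A_r$-coefficients inside $\fv$ and not all zero, so the surjection $A_r \otimes W \twoheadrightarrow X_2$ has nontrivial kernel and $\dim X_2 \leq ra - 1$. For the converse, a subrepresentation $X$ of dimension $(a, a')$ with $1 \leq a \leq d$ and $a' \leq ar - 1$ forces $\psi_X \colon A_r \otimes X_1 \to X_2$ to have a nonzero kernel element $x$, since $\dim(A_r \otimes X_1) = ra > a' = \dim X_2$. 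The decisive step is then to concentrate $x$: by successively eliminating linear dependencies among its $X_1$-coefficients, one rewrites $x = \sum_l b_l \otimes w_l$ with $w_l \in X_1$ linearly independent, so at most $a$ nonzero summands appear. The span $\fv_x \subseteq A_r$ of the $b_l$ then has dimension at most $a \leq d$, and because $d < r$ it extends to some $\fv \in \Gr_d(A_r)$; the very same $x$ witnesses $M \notin \rep_{\proj}(K_r, d)$.

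The main obstacle is the converse in part~(2): turning a kernel element of the structure map on a small subspace $X_1 \subseteq M_1$ into a $d$-dimensional witness inside $A_r$. Everything hinges on the elementary tensor-rank observation that an element of $A_r \otimes X_1$ can always be written using a subspace of $A_r$ of dimension at most $\dim X_1$. Combined with the constraint $a \leq d < r$, this leaves exactly enough room to enlarge the minimal $\fv_x$ to a $d$-dimensional subspace of $A_r$, which explains the precise bound $a \leq d$ and $a' \leq ar - 1$ appearing in the statement.
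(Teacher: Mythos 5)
Your proof is correct and follows essentially the same route as the paper: in both parts you pass back and forth between a witness ($m$ annihilated by $\fv$, resp.\ a kernel element of $\psi_M|_{\fv\otimes M_1}$) and a subrepresentation generated by the relevant vectors, using the same tensor-rank/dimension counts (the paper's only cosmetic differences are that it splits $A_r=\fv\oplus\fu$ to bound $\dim_\bmk X_2$ in (2)(i)$\Rightarrow$(ii) and splits off $P_0$-summands in (2)(ii)$\Rightarrow$(i), neither of which you need). No gaps.
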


\begin{proof}
\begin{enumerate}
    \item (i) $\implies$ (ii). Let $\fv \in \Gr_d(A_r)$ and $0 \neq x \in \bigcap_{a \in \fv} \ker a_M$. We denote by $X$ the representation generated by $x$. Let $\fu \in \Gr_{r-d}(A_r)$ such that $\fu \oplus \fv = A_r$. Then $X_2 = \im \psi_{M}(\fv \otimes_{\bmk} X_1) + \im \psi_{M}(\fu \otimes_{\bmk} X_1) = \im \psi_{M}(\fu \otimes_{\bmk} X_1)$. Since $\dim_{\bmk} X_1 = 1$, we obtain $ \psi_{M}(\fu \otimes_{\bmk} X_1) \leq \dim_{\bmk} \fu = r-d$.

(ii) $\implies$ (ii). Let $x \in X_1 \setminus \{0\}$. Then $x$ generates an indecomposable representation $\langle x \rangle \subseteq X$ with $\dimu \langle x \rangle = (1,u)$ for some $0 \leq u \leq r - d$ and
\[ \psi_{M}|_{A_r \otimes_{\bmk} \bmk x} \colon  A_r \otimes_{\bmk} \bmk x  \lra (\langle x \rangle)_2 ; a \otimes m \mapsto a.m \]
is surjective. We have $\dim_{\bmk} \ker \psi_{M}|_{A_r \otimes_{\bmk} \bmk x} = r - u\geq  r - (r-d) = d$. Hence we find $\fv \in \Gr_d(A_r)$ such that $\fv.x = \{0\}$, $0 \neq x \in \bigcap_{a \in \fv} a_M$ and $M \not\in \rep_{\esp}(K_r,d)$.
    \item  (i) $\implies$ (ii). By definition we find $\fv \in \Gr_d(A_r)$ such that $\psi_M|_{\fv\otimes_{\bmk} M_1} \colon \fv \otimes_{\bmk} M_1 \lra M_2$ is not injective. We fix a basis $(v_1,\ldots,v_d)$ of $\fv$ and an element $0 \neq x = \sum^d_{i=1} v_i \otimes m_i$ in the kernel of $\psi_M|_{\fv\otimes_{\bmk} M_1}$. We consider the module $X \subseteq M$ generated by $\{m_1,\ldots,m_d\}$, then $1 \leq \dim_{\bmk} X_1 \leq d$. Let $\fu \in \Gr_{r-d}(A_r)$ be a direct complement of $\fv$ in $A_r$. We have
\begin{align*}
    \dim_{\bmk} X_2 &\leq \dim_{\bmk} \psi_{M}(\fv \otimes_{\bmk} X_1) +\dim_{\bmk} \psi_{M}(\fu \otimes_{\bmk} X_1)\\
    &\leq d\dim_{\bmk}X_1 - 1 + (r-d)\dim_{\bmk} X_1\\
    &= r \dim_{\bmk} X_1 -1 .
\end{align*} 
(ii) $\implies$ (i). We write $X = Y \oplus P_0^{\ell}$ such that $P_0$ is not a direct summand of $Y$. Then 
\[ \psi_M|_{A_r \otimes_{\bmk} Y_1} \colon A_r \otimes_{\bmk} Y_1 \lra Y_2\]
is surjective. We have $\dim_{\bmk} Y_1 = a$, $\dim_{\bmk} Y_2 \leq ar-1$ and obtain $\dim_{\bmk} \ker(\psi_M|_{A_r \otimes_{\bmk} Y_1}) \geq ra-(ar-1) = 1$. Let $(v_1,\ldots,v_a)$ be a basis of $Y_1$. We find $0 \neq x = \sum^a_{i=1} y_i \otimes v_i\in \ker(\psi_M|_{A_r \otimes_{\bmk} Y_1})$ and $\fv \in \Gr_d(A_r)$ containing $y_1,\ldots,y_a$. Therefore $0 \neq x \in \ker \psi|_{\fv \otimes_{\bmk} M_1}$.
\end{enumerate}
\end{proof}

\begin{Remark}\label{2.10}
Note that the subrepresentations $X$ in (1) and (2) are not in $\rep_{\esp}(K_r,d)$ and $\rep_{\proj}(K_r,r-d)$, respectively. In particular, they are not preprojective.
\end{Remark}

\begin{Theorem}\label{2.11} Let $V_1,V_2$ be vector spaces such that $V_1 \oplus V_2 \neq 0$. The following statements hold.
\begin{enumerate}
    \item The set $\rep_{\esp}(K_r,d) \cap \rep(K_r;V_1,V_2)$ is open in $\rep(K_r;V_1,V_2)$.
   % \item For $i \in \NN$ we have $\nabla_{P_i}(d) \geq d(r-d)$.
    \item The following statements are equivalent.
    \begin{enumerate}
        \item[(i)] $\rep_{\esp}(K_r,d) \cap \rep(K_r;V_1,V_2) \neq \emptyset$.
        \item[(ii)] $V_1 = 0$ or $\nabla_{(V_1,V_2)}(d) \geq d(r-d)$.
    \end{enumerate}
\end{enumerate}
\end{Theorem}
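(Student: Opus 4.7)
My plan is to reduce both parts to elementary statements about subrepresentation–dimension strata in $\rep(K_r; V_1, V_2)$ by combining the characterization of $\rep_{\esp}(K_r,d)$ given in \cref{2.9}(1) with the generic subrepresentation criterion \cref{2.8}. Write $\bd := (d_1, d_2) = (\dim_\bmk V_1, \dim_\bmk V_2)$. By \cref{2.9}(1) the complement of $\rep_{\esp}(K_r, d) \cap \rep(K_r; V_1, V_2)$ inside $\rep(K_r; V_1, V_2)$ equals
\[
\bigcup_{a = 0}^{r - d} \rep(K_r; V_1, V_2)_{(1, a)},
\]
a finite union of Zariski-closed sets. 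This immediately gives part~(1).

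For part~(2), the implication $V_1 = 0 \Rightarrow \mathrm{(i)}$ is trivial because the equal $d$-socle condition is then vacuous, so I would assume $V_1 \neq 0$. As $\rep(K_r; V_1, V_2) \cong \Hom_\bmk(V_1, V_2)^r$ is irreducible, the open set from part~(1) is nonempty iff each closed stratum in the union above is a proper subset, i.e.\ iff $(1, a) \not\hlra \bd$ for all $a \in \{0, \ldots, r-d\}$. Since $q_r((1, a)) = 1 - a(r - a) \leq 1$ throughout this range, \cref{2.8} applies, and a direct computation gives
\[
\langle (1, a), \bd - (1, a)\rangle_r = d_1 - 1 - (r - a)(d_2 - a),
\]
so $(1, a) \hlra \bd$ iff $a \leq d_2$ and $d_1 \geq 1 + (r - a)(d_2 - a)$.

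I would then dispose of the case $d_2 < r - d$ by hand: the choice $a = d_2 \in \{0,\ldots,r-d\}$ yields $(1, d_2) \leq \bd$, while the required inequality reads $d_1 \geq 1$, which holds automatically, so $(1, d_2) \hlra \bd$, contradicting~(i). Hence $d_2 \geq r - d$, and (i) reduces to $d_1 \leq (r - a)(d_2 - a)$ for every $a \in \{0, \ldots, r - d\}$. The parabola $g(a) := (r - a)(d_2 - a)$ has its minimum at $a = (r + d_2)/2 \geq r - d/2 > r - d$, so $g$ is strictly decreasing on $[0, r - d]$ and the unique binding inequality is the one at $a = r - d$, namely $d_1 \leq d(d_2 - (r - d))$, which is precisely $\nabla_{(V_1, V_2)}(d) \geq d(r - d)$. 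The main obstacle I anticipate is exactly this $d_2 < r - d$ case: it is not handled uniformly by the parabola minimization and must be excluded first, after which the monotonicity of $g$ collapses all $r - d + 1$ stratum inequalities to the single one stated in~(ii).
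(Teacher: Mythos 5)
Your proof is correct and follows essentially the same route as the paper: both parts rest on the stratification from \cref{2.9}(1) together with the numerical criterion \cref{2.8}, and the decisive inequality is the same computation of $\langle (1,r-d),\bd-(1,r-d)\rangle_r$. The only cosmetic difference is that the paper collapses the union of strata by noting $\rep(K_r;V_1,V_2)_{(1,i)}\subseteq\rep(K_r;V_1,V_2)_{(1,r-d)}$ once $\dim_\bmk V_2\geq r-d$, whereas you collapse the corresponding family of inequalities via monotonicity of $(r-a)(d_2-a)$ on $[0,r-d]$; both are valid.
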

\begin{proof}
\begin{enumerate}
    \item By \cref{2.9} we have \[\rep_{\esp}(K_r,d) \cap \rep(K_r;V_1,V_2)  = \rep(K_r;V_1,V_2) \setminus \bigcup_{i=0}^{r-d} \rep(K_r;V_1,V_2)_{(1,i)}.\]
   \item 
    (i) $\implies$ (ii) Assume that $\rep_{\esp}(K_r,d) \cap \rep(K_r;V_1,V_2) \neq \emptyset$. We assume that $V_1 \neq 0$. Then $\dim_{\bmk} V_2 > r -d$ by \cref{2.9}. Another application of \cref{2.9} implies $(1,r-d) \not \hookrightarrow (\dim_{\bmk} V_1,\dim_{\bmk} V_2)$. We have $q_r(1,r-d)\leq 1$ and conclude with \cref{2.8}
   \begin{align*}
        0 &> \langle (1,r-d),(\dim_{\bmk} V_1,\dim_{\bmk} V_2)-(1,r-d)\rangle_r\\
        &= \dim_{\bmk}V_1 - d \dim_{\bmk} V_2 - (1-d(r-d)) \\
        &= -\nabla_{(V_1,V_2)}(d) + d(r-d) - 1.
   \end{align*}
    (ii) $\implies$ (i). If $V_1 = 0$, the statement is clear. Hence we assume $\dim_\bmk V_1 \neq 0$ and $\nabla_{(V_1,V_2)}(d) \geq d(r-d)$. We have $d\dim_\bmk V_2 \geq \nabla_{(V_1,V_2)}(d) \geq d(r-d)$ and conclude $\dim_{\bmk} V_2 \geq r - d$. Hence $(1,r-d) \leq \dimu(V_1,V_2)$ with
   \[ \langle (1,r-d),\dimu(V_1,V_2)-(1,r-d)\rangle_r = -\nabla_{(V_1,V_2)}(d) + d(r-d) - 1 \leq  - 1.\]
   Since $q_r(1,r-d) \leq 1$, we conclude with \cref{2.8} that $\rep(K_r;V_1,V_2) \setminus \rep(K_r;V_1,V_2)_{(1,r-d)}$ is non-empty. Note that $\dim_{\bmk} V_2 \geq r-d$ and $V_1 \neq 0$ imply $\rep(K_r;V_1,V_2)_{(1,r-d)} = \bigcup_{i=0}^{r-d} \rep(K_r;V_1,V_2)_{(1,i)}$. Now we apply \cref{2.9}.
\end{enumerate}
\end{proof}

Recall that a representation $M \in \rep(K_r)$ is called {\it brick} if $\End_{K_r}(M) \cong \bmk$. Clearly, bricks are indecomposable. Given $M \in \rep(K_r)$ indecomposable, Kac's Theorem implies $q_{r}(\dimu M) \leq 1$. Therefore the following result describes all dimension vectors that can be realized by indecomposable elements in $\rep_{\esp}(K_r,d)$.

\begin{corollary}\label{2.12}
Let $V_1,V_2$ be a pair of vector spaces such that $V_1\oplus V_2 \neq 0$ and $q_r(\dimu(V_1,V_2)) \leq 1$. The following statements are equivalent.
      \begin{enumerate}
                \item[(i)] $\rep_{\esp}(K_r,d) \cap \rep(K_r;V_1,V_2) \neq \emptyset$.
                \item[(ii)] $\rep_{\esp}(K_r,d) \cap \rep(K_r;V_1,V_2)$ is a dense open subset $\rep(K_r;V_1,V_2)$.
                \item[(iii)] There exists a brick $N \in \rep_{\esp}(K_r,d)$ with dimension vector $\dimu N = \dimu (V_1,V_2)$.
                \item[(iv)] $\nabla_{(V_1,V_2)}(d) \geq d(r-d)$ or $\dimu (V_1,V_2) = (0,1)$.
       \end{enumerate}
  \end{corollary}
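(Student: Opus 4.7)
The plan is to chain together \cref{2.11} with the Schur root interpretation of the hypothesis $q_r(\dimu(V_1,V_2)) \leq 1$, after dispensing with a trivial degenerate case. First I would isolate the case $V_1 = 0$: the inequality $q_r(0,\dim_\bmk V_2) = (\dim_\bmk V_2)^2 \leq 1$ forces $\dim_\bmk V_2 = 1$, so $\rep(K_r;V_1,V_2)$ consists of the single representation $S_2$, which is a brick lying trivially in $\rep_{\esp}(K_r,d)$, and all four conditions become immediate.

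From here on I would assume $V_1 \neq 0$. The equivalence (i) $\Leftrightarrow$ (iv) is then exactly \cref{2.11} (2). For (i) $\Rightarrow$ (ii), \cref{2.11} (1) yields openness, and since $\rep(K_r;V_1,V_2) = \Hom_\bmk(V_1,V_2)^r$ is an affine space and thus irreducible, every non-empty Zariski-open subset is automatically dense. The converse (ii) $\Rightarrow$ (i) is immediate, and (iii) $\Rightarrow$ (i) is as well, since any such $N$ supplies a point in $\rep_{\esp}(K_r,d) \cap \rep(K_r;V_1,V_2)$.

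The key step is (i) $\Rightarrow$ (iii). Here I would invoke the fact (already used in the proof of \cref{2.8} via \cite[1.2.2]{BF24}) that $q_r(\be) \leq 1$ implies $\be$ is a Schur root of $K_r$. Consequently, the brick locus $\cB \subseteq \rep(K_r;V_1,V_2)$ is non-empty; by upper semicontinuity of $\dim_\bmk \End$ it is Zariski-open, hence dense by irreducibility. Combined with the dense openness of $\rep_{\esp}(K_r,d) \cap \rep(K_r;V_1,V_2)$ already established under assumption (i), the two dense open subsets of the irreducible variety $\rep(K_r;V_1,V_2)$ must intersect, and any point of their intersection is a brick in $\rep_{\esp}(K_r,d)$ with dimension vector $\dimu(V_1,V_2)$.

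The main obstacle is essentially bookkeeping: we must check that the hypothesis $q_r(\dimu(V_1,V_2)) \leq 1$ really does deliver the Schur root property and that upper semicontinuity of $\dim_\bmk \End$ is applied cleanly, so that the brick locus is genuinely open dense in $\rep(K_r;V_1,V_2)$. Once these two ingredients are in place, the implication (i) $\Rightarrow$ (iii) reduces to the formal principle that two dense open subsets of an irreducible variety meet, and the remaining implications collapse to \cref{2.11} together with the irreducibility of the representation variety.
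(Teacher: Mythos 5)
Your proposal is correct and follows essentially the same route as the paper: openness of $\rep_{\esp}(K_r,d)\cap\rep(K_r;V_1,V_2)$ from \cref{2.11}, density from irreducibility of the affine space $\rep(K_r;V_1,V_2)$, density of the brick locus for a root with $q_r \leq 1$ via \cite[1.2.2]{BF24}, and the intersection of two dense open subsets to produce the brick in (iii); the equivalence with (iv) is \cref{2.11}(2) in both treatments. The only cosmetic difference is that you split off the case $V_1=0$ at the outset, whereas the paper absorbs it into the implication (iii) $\Rightarrow$ (iv) by observing that a representation with dimension vector $(0,\dim_\bmk V_2)$ is $P_0^{\dim_\bmk V_2}$.
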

\begin{proof}
(i) $\implies$ (ii). This is clear since  $\rep_{\esp}(K_r,d) \cap \rep(K_r;V_1,V_2)$ is open in $\rep(K_r;V_1,V_2)$ by \cref{2.11} and $ \rep(K_r;V_1,V_2)$ is irreducible.\\
(ii) $\implies$ (iii). Since $q_r(\dimu (V_1,V_2)) \leq 1$ and $V_1 \oplus V_2 \neq 0$, we know from \cite[1.2.2]{BF24} that the open set
\[ \cB{(V_1,V_2)} \coloneqq \{ g \in \rep(K_r;V_1,V_2) \mid (V_1,V_2,g) \ \text{is a brick}\}\]
is dense in $\rep(K_r;V_1,V_2)$. Hence $\cB{(V_1,V_2)} \cap \rep_{\esp}(K_r,d)$ lies also dense $\rep(K_r;V_1,V_2)$ and is in particular non-empty. \\
(iii) $\implies$ (iv). Follows from \cref{2.11} and the fact that a representation with dimension vector $(0,\dim_{\bmk} V_2)$ is isomorphic to $P_0^{\dim_{\bmk} V_2}$.\\
(iv) $\implies$ (i) Follows from \cref{2.11}.
\end{proof}

Now we have the tools to give an alternative proof of \cref{2.2}(2).

\begin{corollary}(cf. \cite[2.3.2, 3.3.2]{BF24}, \cite[2.4]{AM15}) \label{2.13}
 Let $V_1,V_2$ be vector spaces such that $V_1 \oplus V_2 \neq 0$. The following statements hold.
\begin{enumerate}
\item The set $\rep_{\proj}(K_r,d) \cap \rep(K_r;V_1,V_2)$ is open in $\rep_{\proj}(K_r,d)$. 
\item The following statements are equivalent.
\begin{enumerate}
    \item[(i)] $\rep_{\proj}(K_r,d) \cap \rep(K_r;V_1,V_2) \neq \emptyset$.
    \item[(ii)] $\Delta_{(V_1,V_2)}(d) \geq \min\{d(r-d),\dim_{\bmk} V_1 (r-d)\}$.
\end{enumerate}
\item Let $\cF$ be a Steiner bundle on $\Gr_d(A_r)$ with resolution
\[ 0 \lra V_1 \otimes_\bmk \cU_{(r,d)} \lra V_2 \otimes_\bmk \cO_{\Gr_d(A_r)} \lra \cF \lra 0,\]
then $\rk(\cF) \geq \min \{ d(r-d),\dim_\bmk V_1 (r-d)\}$.
\end{enumerate}
\end{corollary}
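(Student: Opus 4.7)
The plan is to derive all three items from \cref{2.11} via the shift functor $\sigma_{K_r}$, using \cref{2.5} as the bridge between $\rep_{\proj}(K_r,d)$ and $\rep_{\esp}(K_r,r-d)$. Part (1) is immediate from \cref{2.9}(2): the complement of $\rep_{\proj}(K_r,d) \cap \rep(K_r;V_1,V_2)$ in $\rep(K_r;V_1,V_2)$ is the finite union $\bigcup_{a=1}^{d}\bigcup_{a'=0}^{ar-1} \rep(K_r;V_1,V_2)_{(a,a')}$ of Zariski-closed subsets, hence closed.

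For the implication (ii)\,$\Rightarrow$\,(i) of part (2), I distinguish three cases. If $V_1 = 0$, take $M \coloneqq P_0^{\dim V_2}$. If $V_1 \neq 0$ and $\dim V_2 \geq r\dim V_1$, take $M \coloneqq P_1^{\dim V_1} \oplus P_0^{\dim V_2 - r\dim V_1}$, which is projective hence relative $d$-projective. In the remaining case $V_1 \neq 0$ and $\dim V_2 < r\dim V_1$, the hypothesis forces $\dim V_1 > d$ (otherwise the minimum equals $(r-d)\dim V_1$ and (ii) would contradict $\dim V_2 < r\dim V_1$), so $\Delta_{(V_1,V_2)}(d) \geq d(r-d)$. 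Setting $W_1 \coloneqq \bmk^{r\dim V_1 - \dim V_2}$ and $W_2 \coloneqq \bmk^{\dim V_1}$, a direct calculation yields $\nabla_{(W_1,W_2)}(r-d) = \Delta_{(V_1,V_2)}(d) \geq d(r-d)$, so \cref{2.11}(2) produces $M' \in \rep_{\esp}(K_r,r-d) \cap \rep(K_r;W_1,W_2)$. Since $S_1 \notin \rep_{\esp}(K_r,r-d)$ by \cref{2.9}(1), $M'$ has no $S_1$-summand, so $\sigma_{K_r}^{-1}(M')$ is defined and lies in $\rep_{\proj}(K_r,d)$ by \cref{2.5}; its dimension vector is $\sigma_r^{-1}(\dimu M') = (\dim V_1,\dim V_2)$.

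For (i)\,$\Rightarrow$\,(ii), decompose $M \in \rep_{\proj}(K_r,d) \cap \rep(K_r;V_1,V_2)$ as $M = M' \oplus P_0^t$ with $M'$ free of $P_0$-summands. If $M' = 0$ the inequality is trivial. If $\sigma_{K_r}(M')$ has vanishing first component, then $M' \cong P_1^{\dim V_1}$, hence $\dim V_2 \geq r\dim V_1$ and $\Delta_M(d) \geq (r-d)\dim V_1$. Otherwise $\sigma_{K_r}(M') \in \rep_{\esp}(K_r,r-d)$ has nonzero first component and \cref{2.11}(2) gives $\nabla_{\sigma_{K_r}(M')}(r-d) \geq d(r-d)$; unwinding dimension vectors yields $\Delta_M(d) = \nabla_{\sigma_{K_r}(M')}(r-d) + t \geq d(r-d)$. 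Finally, part (3) follows from (2) via the fully faithful functor $\widetilde{\Theta}_d$ of \cref{2.2}(1): writing $\cF \cong \widetilde{\Theta}_d(M)$ for $M \in \rep_{\proj}(K_r,d)$ with $\dim M_i = \dim V_i$, taking ranks in the defining sequence gives $\rk \cF = \Delta_{(V_1,V_2)}(d)$, and (2) supplies the bound.

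The main obstacle will be the careful bookkeeping of the exceptional summands $P_0$ and $P_1$: these are the objects on which $\sigma_{K_r}$ and $\sigma_{K_r}^{-1}$ fail to preserve isomorphism classes in the naive way, and they interact with the boundary cases $V_1 = 0$ and $\dim V_2 = r\dim V_1$ of the inequality in (ii).
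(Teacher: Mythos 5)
Your proposal is correct and follows essentially the same route as the paper: part (1) via the closed subsets of \cref{2.9}, part (2) by transporting the problem through $\sigma_{K_r}$ using \cref{2.5} and \cref{2.11} after splitting off the projective summands, and part (3) via $\widetilde{\Theta}_d$ from \cref{2.2}(1). The only cosmetic difference is that the paper splits off both $P_0$- and $P_1$-summands explicitly in (i)$\Rightarrow$(ii), whereas you detect the all-$P_1$ case through the vanishing of $\sigma_{K_r}(M')_1$; the two bookkeeping schemes are equivalent.
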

\begin{proof}
\begin{enumerate}
\item In view of \cref{2.9} the set 
    \[\rep_{\proj}(K_r,d) \cap \rep(K_r;V_1,V_2) = \rep(K_r;V_1,V_2) \setminus \bigcup_{\be \in \fM}  \rep(K_r;V_1,V_2)_{\be}\]
    for $\fM := \{ (a,a') \mid a \in \{0,\ldots,d\}, a' \in \{0,\ldots,ad-1\}\}$ is open. 
    \item 
        (i) $\implies$ (ii). Let $M \in \rep_{\proj}(K_r,d)$. We write $M = P_0^a \oplus P_1^b \oplus N$ such that $P_0,P_1 \nmid N$. If $N \neq 0$, we have $\dimu \sigma_{K_r}(M) = \sigma_{K_r}(N) \oplus P_0^b$ and \cref{2.5} implies $\sigma_{K_r}(N) \in \rep_{\esp}(K_r,r-d)$. We have
\begin{align*}
    \dimu \sigma_{K_r}(N) &= \sigma_r(\dim_\bmk M_1 - b,\dim_\bmk M_2 - a - rb) \\
    &= (r(\dim_\bmk M_1-b) - \dim_\bmk M_2 + a + rb,\dim_\bmk M_1-b)\\
    &= (r \dim_\bmk M_1 - \dim_\bmk M_2 + a,\dim_\bmk M_1-b).
\end{align*}
Since $N$ is not projective, we have $\sigma_{K_r}(N)_1\neq 0$ and conclude with \cref{2.11}
\begin{align*}
    d(r-d)  &\leq \nabla_{\sigma_{K_r}(N)}(r-d) \\
    &=(r-d)(\dim_\bmk M_1 - b) - (r \dim_\bmk M_1 - \dim_\bmk M_2 + a)\\
    &= \dim_\bmk M_2  - d \dim_\bmk M_1 - b(r-d)  -a = \Delta_M(d) - b(r-d) -a.
\end{align*} 
Hence 
\[ d(r-d) \leq d(r-d) + b(r-d) +a \leq \Delta_M(d).\]
Now assume that $N = 0$, i.e. $M$ is projective. Then $\Delta_M(d) = b(r-d)+a \geq b(r-d) = \dim_\bmk M_1 (r-d)$.

(ii) $\implies$ (i). At first we consider the case $\Delta_{(V_1,V_2)}(d) \geq \dim_{\bmk} V_1(r-d)$. Then we have $\dim_{\bmk} V_2 \geq r \dim_{\bmk} V_1$, i.e. $\Delta_{(V_1,V_2)}(r) \geq 0$. Since $(\dim_{\bmk} V_1) P_1 \oplus \Delta_{(V_1,V_2)}(r) P_0 \in \rep_{\proj}(K_r,d)$ has dimension vector $\dimu(V_1,V_2)$, we conclude $\rep_{\proj}(K_r,d) \cap \rep(K_r;V_1,V_2) \neq \emptyset$.

Now we consider the case $\Delta_{(V_1,V_2)}(d) \geq d(r-d)$. By the first case, we may assume that $\dim_{\bmk} V_2 < r \dim_{\bmk} V_1$ holds. We consider $(r \dim_{\bmk} V_1-\dim_{\bmk} V_2,\dim_{\bmk} V_1) \in \NN_0^2 \setminus \{(0,0)\}$. Then $\nabla_{(r \dim_{\bmk} V_1-\dim_{\bmk} V_2,\dim_{\bmk} V_1)}(r-d) = (r-d) \dim_{\bmk} V_1 - (r \dim_{\bmk} V_1-\dim_{\bmk} V_2) = \Delta_{(V_1,V_2)}(d) \geq d(r-d)$.
We apply \cref{2.11} and find $M \in \rep_{\esp}(K_r,r-d)$ with dimension vector $(r \dim_{\bmk} V_1-\dim_{\bmk} V_2,\dim_{\bmk} V_1)$. Since $\rep_{\esp}(K_r,r-d)$ does not contain $I_0 = S_1$, we conclude $\dimu \sigma_{K_r}^{-1}(M) = \dimu(V_1,V_2)$ and \cref{2.5} implies $\sigma_{K_r}^{-1}(M) \in \rep_{\proj}(K_r,d)$. 
\item This follows from \cref{2.2}(1) in conjunction with (2).
\end{enumerate}
\end{proof}

We record two more consequences that we will need in the next section for the study of elementary representations. 

\begin{corollary}\label{2.14}
 Let $V_1,V_2$ be vector spaces such that $V_1 \oplus V_2 \neq 0$. The set $\rep_{\inj}(K_r,d) \cap \rep(K_r;V_1,V_2)$ is open in $\rep(K_r;V_1,V_2)$ and non-empty if $-\nabla_{(V_1,V_2)}(d) \geq d(r-d)$.
\end{corollary}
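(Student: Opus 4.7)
The plan is to deduce this from \cref{2.13} by transporting the question to the relative projective side via the duality $D_{K_r}$. Recall that by definition $\rep_{\inj}(K_r,d) = D_{K_r}(\rep_{\proj}(K_r,d))$, and $D_{K_r}$ sends a representation on $(V_1,V_2)$ to a representation on $(V_2^\ast, V_1^\ast)$ by dualising the arrows (using that $K_r^{\op} \cong K_r$). First, I would observe that on the level of representation varieties this duality furnishes a $\bmk$-linear (in particular, biregular) isomorphism
\[ D_{K_r} \colon \rep(K_r;V_1,V_2) \lra \rep(K_r;V_2^\ast,V_1^\ast)\]
that identifies the subset $\rep_{\inj}(K_r,d)\cap \rep(K_r;V_1,V_2)$ with $\rep_{\proj}(K_r,d)\cap \rep(K_r;V_2^\ast,V_1^\ast)$.

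For the openness statement, I would then invoke \cref{2.13}(1) applied to the pair $(V_2^\ast,V_1^\ast)$: this tells us that $\rep_{\proj}(K_r,d)\cap \rep(K_r;V_2^\ast,V_1^\ast)$ is open in $\rep(K_r;V_2^\ast,V_1^\ast)$. Since $D_{K_r}$ is a variety isomorphism, openness transfers back and yields the first claim.

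For the non-emptiness statement, the key computation is the identity
\[ \Delta_{(V_2^\ast,V_1^\ast)}(d) = \dim_{\bmk} V_1 - d \dim_{\bmk} V_2 = -\nabla_{(V_1,V_2)}(d),\]
which follows directly from the definitions in the beginning of the section. Under the hypothesis $-\nabla_{(V_1,V_2)}(d) \geq d(r-d)$ we therefore obtain $\Delta_{(V_2^\ast,V_1^\ast)}(d) \geq d(r-d) \geq \min\{d(r-d),(\dim_{\bmk} V_2)(r-d)\}$, so \cref{2.13}(2) guarantees $\rep_{\proj}(K_r,d)\cap \rep(K_r;V_2^\ast,V_1^\ast) \neq \emptyset$. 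Applying $D_{K_r}^{-1}$ produces an element of $\rep_{\inj}(K_r,d) \cap \rep(K_r;V_1,V_2)$, completing the proof.

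There is essentially no obstacle here: the entire argument is formal dualisation, and the only thing that needs a moment's care is bookkeeping the swap of dimensions $(\dim V_1,\dim V_2) \leftrightarrow (\dim V_2,\dim V_1)$ together with the identity $\Delta(d)\leftrightarrow -\nabla(d)$ under $D_{K_r}$. Once that is in place, \cref{2.13} delivers both assertions immediately.
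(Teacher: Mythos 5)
Your argument is correct and is essentially identical to the paper's own proof: both transport the statement to the relative projective side via the duality $D_{K_r}$, use the identity $\Delta(d)\leftrightarrow-\nabla(d)$ under the swap of the two vector spaces, and then invoke \cref{2.13} for both openness and non-emptiness. The only difference is cosmetic (the paper writes the variety isomorphism as $\rep(K_r;V_2,V_1)\to\rep(K_r;V_1^\ast,V_2^\ast)\cong\rep(K_r;V_1,V_2)$ rather than starting from $\rep(K_r;V_1,V_2)$).
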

\begin{proof}
Note that $\rep_{\inj}(K_r,d) \cap \rep(K_r;V_1,V_2)$ is open in  $\rep(K_r;V_1,V_2)$, since the duality $D_{K_r} \colon \rep(K_r) \lra \rep(K_r)$ induces an isomorphism of varieties \[\rep(K_r;V_2,V_1) \lra  \rep(K_r;V_1^\ast,V_2^\ast) \cong \rep(K_r;V_1,V_2)  \]
that takes $\rep_{\proj}(K_r,d) \cap \rep(K_r;V_2,V_1)$ to $\rep_{\inj}(K_r,d) \cap \rep(K_r;V_1,V_2)$. If $-\nabla_{(V_1,V_2)}(d) \geq d(r-d)$, we 
$\Delta_{(V_2,V_1)}(d) = \dim_\bmk V_1 - d \dim_\bmk V_2 =  -\nabla_{(V_1,V_2)}(d) \geq d(r-d)$. Hence \cref{2.13} implies that $\rep_{\proj}(K_r,d) \cap \rep(K_r;V_2,V_1)$ is non-empty. By duality \[\rep_{\inj}(K_r,d) \cap \rep(K_r;V_1^\ast,V_2^\ast) \cong \rep_{\inj}(K_r,d) \cap \rep(K_r;V_1,V_2)\]
is non-empty.
\end{proof}

\begin{corollary}\label{2.15}
Let $M \in \rep(K_r)$ be a representation with $(1,r-d) \leq \dimu(M_1,M_2)$. We assume that one of the following conditions holds:
\begin{enumerate}
    \item[(i)] $\nabla_M(d) < d(r-d)$, or
    \item[(ii)] $M \not\in \rep_{\esp}(K_r,d)$.
\end{enumerate}
Then there exists a non-preprojective subpresentation $U_{r-d}$ of $M$ with dimension vector $(1,r-d)$. 
\end{corollary}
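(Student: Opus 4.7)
The plan is first to reduce condition (i) to condition (ii), and then to upgrade the witness guaranteed by the failure of the equal socle property into a subrepresentation of the prescribed dimension vector.

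For the reduction, observe that the hypothesis $(1,r-d) \leq \dimu M$ forces $M_1 \neq 0$, so \cref{2.11}(2) applies and shows that $\rep_{\esp}(K_r,d) \cap \rep(K_r;M_1,M_2)$ is non-empty if and only if $\nabla_M(d) \geq d(r-d)$. In particular, if (i) holds then this intersection is empty, hence $M \not\in \rep_{\esp}(K_r,d)$, and (i) implies (ii). So in either case I may work under the assumption (ii).

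Having $M \not\in \rep_{\esp}(K_r,d)$, I extract from the definition an element $\fv \in \Gr_d(A_r)$ and a nonzero $x \in M_1$ with $\fv \cdot x = 0$. Picking any complement $\fu$ of $\fv$ in $A_r$ (so $\dim_\bmk \fu = r-d$), the subrepresentation $X = \langle x \rangle \subseteq M$ has $X_1 = \bmk x$ and $X_2 = A_r \cdot x = \fu \cdot x$, hence $\dim_\bmk X_2 \leq r-d$. Since by hypothesis $\dim_\bmk M_2 \geq r-d$, I may enlarge $X_2$ to a subspace $W \subseteq M_2$ with $X_2 \subseteq W$ and $\dim_\bmk W = r-d$. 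Setting $U_{r-d} := (\bmk x, W)$ with the structure maps inherited from $M$, the inclusion $A_r \cdot x \subseteq W$ guarantees that $U_{r-d}$ is a well-defined subrepresentation of $M$ with dimension vector $(1, r-d)$.

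Finally, non-preprojectivity is automatic. The indecomposable preprojective Kronecker representations are the $P_i$ with $\dimu P_i = (a_i,a_{i+1})$, and one has $a_0 = 0$, $a_1 = 1$, $a_2 = r$, with $(a_i)_{i \geq 1}$ strictly increasing. A preprojective representation with first coordinate $1$ must therefore decompose as $P_1 \oplus P_0^{\,j}$ for some $j \geq 0$, so its second coordinate is at least $r$. Since $r - d < r$, no representation of dimension vector $(1, r-d)$ can be preprojective, so $U_{r-d}$ is automatically non-preprojective. The only substantive point I expect is the realization that the witness furnished by \cref{2.9}(1) yields a cyclic subrepresentation whose second component has dimension at most $r-d$, leaving enough room inside $M_2$ to enlarge it to exactly $(1, r-d)$; everything else is bookkeeping.
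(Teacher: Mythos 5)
Your proof is correct and follows essentially the same route as the paper: reduce (i) to (ii) via \cref{2.11}, produce a cyclic subrepresentation of dimension vector $(1,a)$ with $a\le r-d$ (you unwind the definition directly where the paper cites \cref{2.9}(1), but it is the same construction), enlarge it inside $M_2$ to dimension vector $(1,r-d)$, and rule out preprojectivity from the dimension vectors of the $P_i$. No gaps.
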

\begin{proof} In case (i) we conclude with $\nabla_M(d) < d(r-d)$ and \cref{2.11} that $M \not\in \rep_{\esp}(K_r,d)$, since $M_1 \neq \{0\}$. In case (ii) we apply \cref{2.9} and find a subrepresentation $U \subseteq M$  with dimension vector $(1,a)$ for some $a \in \{0,\ldots,r-d\}$. Since $\dim_{\bmk} M_2 \geq r - d$, we can extend $U$ to a subrepresentation $U_{r-d}$ with dimension vector $(1,r-d)$. The only preprojective indecomposable representation $U$ with dimension vector $\dimu U \leq (1,r-d)$ is $U = P_0$ with dimension vector $\dim P_0 = (0,1)$. Hence $U_{r-d}$ is not preprojective.
\end{proof}

\section{Elementary representations}\label{Section:3}

\subsection{General results} Let $Q$ be a connected and wild quiver.

\begin{Definition}
A non-zero regular representation $E \in \rep(Q)$ is called {\it elementary}, provided there is no short exact sequence 
\[ 0 \lra A \lra E \lra B \lra 0\]
with $A,B \in \rep(Q)$ regular and non-zero.
\end{Definition}

By definition the elementary representations are the simple objects in the full subcategory of regular representations and the analogue of quasi-simple regular representations in the context of tame quivers. Elementary representations for wild quivers were first systematically studied in \cite{KL96} and \cite{Luk92}. 
There, the authors showed that, parallel to the tame situation, there exist only finitely many Coxeter-orbits of dimension vectors of elementary representations. A very useful characterization of elementary representations, established more recently in \cite[Appendix A]{Rin16}, is the following:

\begin{proposition}\label{3.1}
Let $E \in \rep(Q)$ be a non-zero regular representation. The following statements are equivalent.
\begin{enumerate}
    \item $E$ is elementary.
    \item Given any subrepresentation $U$ of $E$, $U$ is preprojective or the quotient $E/U$ is preinjective.
\end{enumerate}
\end{proposition}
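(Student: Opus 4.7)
The plan is to prove the two directions using three standard facts about the module category of the wild hereditary algebra $\bmk Q$ (see, e.g., \cite{Ker96}): (a) every subrepresentation of a regular representation has no preinjective indecomposable summand, and every quotient of a regular representation has no preprojective indecomposable summand (both consequences of the $\Hom$-orthogonality between the three classes of indecomposables); (b) preinjective representations are closed under quotients; and (c) $\Ext^1(J,R)=0$ whenever $J$ is preinjective and $R$ is regular (a standard application of Auslander-Reiten duality, using the vanishing of $\Hom$ from regular to preinjective representations and that $\tau_Q$ preserves the preinjective class).

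The direction $(2) \Rightarrow (1)$ is immediate by contrapositive: if $E$ is not elementary, a witnessing short exact sequence $0 \to A \to E \to B \to 0$ with $A, B$ regular and non-zero yields the subrepresentation $U := A$ for which $A$ is not preprojective and $B = E/A$ is not preinjective, contradicting (2).

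For $(1) \Rightarrow (2)$, let $E$ be elementary and let $U \subseteq E$ be any subrepresentation with $U$ not preprojective; the aim is to prove $E/U$ is preinjective. First I reduce to the case where $U$ is regular. Applying (a) to $U \subseteq E$, I decompose $U = N \oplus M$ with $N$ preprojective and $M$ regular, where $M \neq 0$ because $U$ is not preprojective. Since $E/U$ is a quotient of $E/M$ and preinjectivity is closed under quotients by (b), it suffices to prove $E/M$ is preinjective. So assume from now on that $U \subseteq E$ is itself regular and non-zero.

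By (a) applied to the quotient $E/U$, I may write $E/U = R \oplus J$ with $R$ regular and $J$ preinjective; the goal becomes $R = 0$. Suppose for contradiction that $R \neq 0$. Letting $\pi : E \to E/U$ denote the quotient map, define $C := \pi^{-1}(J) \subseteq E$, so that $0 \to U \to C \to J \to 0$ is exact and $E/C \cong R$. By (c) this sequence splits, hence $C \cong U \oplus J$. However, $C \subseteq E$ is a subrepresentation of a regular representation and therefore has no preinjective direct summand by (a), forcing $J = 0$. Then $E/U = R$ is regular and non-zero, and $0 \to U \to E \to R \to 0$ exhibits $E$ as non-elementary, contradicting the hypothesis. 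Hence $R = 0$, so $E/U = J$ is preinjective. The crux of the argument is the $\Ext^1$-vanishing in (c), which forces the pullback $C$ to split off a preinjective piece that cannot live inside the regular $E$; every other step is a routine manipulation of direct sums and short exact sequences.
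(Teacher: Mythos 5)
Both reduction steps and the direction $(2)\Rightarrow(1)$ are fine (note that the paper itself offers no proof here, citing \cite{Rin16} instead), but your fact (c) is false, and with it the decisive splitting step collapses. The separation property for a wild hereditary algebra is $\Hom(\text{preinjective},\text{regular})=0$, \emph{not} $\Hom(\text{regular},\text{preinjective})=0$: every non-zero regular module maps non-trivially to its injective hull, which is preinjective. Accordingly, the Auslander--Reiten formula gives that $\Ext^1(J,R)$ is dual to $\Hom(R,\tau_Q J)$ modulo maps factoring through injectives, and since $\tau_Q J$ is again preinjective this space is in general non-zero. Concretely, for $K_3$, $J=I_0$ (dimension vector $(1,0)$) and $R$ regular of dimension vector $(a,b)$, the projective resolution $0\to P_0^3\to P_1\to I_0\to 0$ yields $\dim_\bmk\Ext^1(I_0,R)\geq 3b-a>0$. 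Your use of (c) never invokes $R\neq 0$ or elementarity, so if (c) were true your argument would prove that $E/U$ is regular for \emph{every} regular submodule $U$ of \emph{every} regular $E$; this already fails for the elementary $K_3$-representation $E$ with $\dimu E=(2,2)$, which has a regular submodule $U$ with $\dimu U=(1,2)$ and $E/U\cong I_0$ preinjective.

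The step can be repaired using only $\Hom$-vanishing, with no appeal to $\Ext^1$. Keep $K:=\pi^{-1}(J)=\ker(E\to R)$ and decompose $K=P'\oplus M'$ with $P'$ preprojective and $M'$ regular (there are no preinjective summands since $K\subseteq E$). Because $\Hom(U,P')=0$, the regular non-zero $U$ lands in $\ker(K\to P')=M'$, so $M'\neq 0$. Now consider $E/M'$: it surjects onto $E/K\cong R$ with kernel $K/M'\cong P'$. As a quotient of $E$ it has no preprojective direct summands; and any preinjective direct summand composes to zero with $E/M'\to R$, hence is contained in $P'$, hence is zero since $\Hom(\text{preinjective},\text{preprojective})=0$. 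Therefore $E/M'$ is regular, and non-zero whenever $R\neq 0$, so the short exact sequence $0\to M'\to E\to E/M'\to 0$ contradicts the elementarity of $E$. This yields $R=0$ and completes $(1)\Rightarrow(2)$.
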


Now we return to the case $Q = K_r$ for $r \geq 3$. It is well known (see for example \cite[3.4]{Rin76}) that the region 
\[ \cC_r \coloneqq \{(x,y) \in \NN^2 \mid \frac{1}{r-1} x \leq y < (r-1)x\} \]
is a fundamental domain for the action of the Coxeter transformation $\Phi_r = \begin{pmatrix}
r^2-1 & -r \\
r & -1
\end{pmatrix} \in \GL_2(\ZZ)$\footnote{We identify $\Phi_{K_r} \colon \ZZ^2 \lra \ZZ^2$ and the {\it Coxeter-matrix} $\Phi_r$ with its natural action on $\ZZ^2$ by left multiplication.} on the set 
\[ \cR_r \coloneqq \{ (x,y) \in \NN^2 \mid x^2 + y^2 - rxy  < 1 \}\] of dimension vectors of regular representations in $\rep(K_r)$. Ultimately, we are interested in the set
\[ \widetilde{\cE}_r \coloneqq  \{ (x,y) \in \cC_r \mid \exists E \in \rep(K_r) \ \text{elementary}, \dimu E = (x,y) \}.\]
By \cite[Section 2]{Rin16} the set
\[ \cF_r \coloneqq \{(x,y) \in \NN^2 \mid \frac{2}{r} x \leq y \leq x\} \subseteq \mathcal{C}_r \]
is a fundamental domain for the action of the group $G_r \subseteq \GL_2(\ZZ)$ generated by $\sigma_r$ and the twist function $\delta \colon \ZZ^2 \lra \ZZ^2 ; (x,y) \mapsto (y,x)$ on $\cR$. In fact, the statement was only proven for $r = 3$ but the arguments extend to the general case.

\begin{figure}[!ht]
    \centering
\begin{tikzpicture}[scale=.5]
   \tkzInit[xmax=7.5,ymax=7.5,xmin=0,ymin=0]
   \tkzGrid
   \tkzAxeXY
   \node[color=black]  at (3.5,5) {$\cC_3$};
   \node[color=black]  at (6,5) {$\cF_3$};
   \draw[thick] (0,0) -- (7.5,7.5) node[anchor=south west] {}; % two points for drawing 2x+y=2
   \draw[ thick] (0,0) -- (7.5,15/3) node[anchor=south west] {}; % two points for drawing 2x+y=2
    
       \draw[ thick,dashed] (0,0) -- (4,8) node[anchor=south west] {}; % two points for drawing 2x+y=2
   \draw[ thick] (0,0) -- (7.5,7.5/2) node[anchor=south west] {}; % two points for drawing 2x+y=2
  \tkzText[above](0,6.75){}
  \begin{scope}[on background layer]
  
  \draw[fill = gray!20] (7.5,7.5) node[anchor=north]{}
  -- (0,0) node[anchor=north]{}
  -- (7.5,15/3) node[anchor=north]{};

   \end{scope}
  \end{tikzpicture}
  \captionsetup{justification=centering,margin=2cm}
  \caption{Illustration of $\cC_3$ and $\cF_3$.}
\end{figure}

We define
\[ \cE_r \coloneqq \widetilde{\cE}_r \cap \cF_r = \{ (x,y) \in \cF_r \mid \exists E \in \rep(K_r) \ \text{elementary}, \dimu E = (x,y) \}.\]
Given $M \in \rep(K_r)$ regular indecomposable, we have $\dimu \sigma_{K_r}(M) = \sigma_r(\dimu M)$ and $\dimu D_{K_r}(M) = \delta(\dimu M)$. Since $M$ is elementary if and only if its dual (respectively its $\sigma_{K_r}$-shift$)$ is elementary and $\sigma_r \circ \sigma_r = \Phi_r$, the determination of $\cE_r$ only necessitates the knowledge of $\widetilde{\cE}_r$. The set $\cE_3$ has been determined in \cite{Rin16} and is given by
\[ \cE_3 = \{ (1,1),(2,2)\}.\]
In the following we determine the set $\cE_r$ for arbitrary $r \geq 3$. We start our considerations with the following simple observations, that will be needed later on.

\begin{Lemma}\label{3.2}
Let $M\in \rep(K_r)$ and $\dim_\bmk M_2 \leq 2(r-1)$.
\begin{enumerate}
    \item If $M$ is preinjective, then $M \in \add(I_0 \oplus I_1 \oplus I_2)$.
    \item If $\dimu M \in \cF_r$ and $U \subseteq M$ such $M/U$ is preinjective, then $M/U \in \add(I_0 \oplus I_1)$ and $M/U \cong -\nabla_{M/U}(r) I_0 \oplus \dim_\bmk (M/U)_1 I_1$.
    % \item If $(x,y) \in \cF_r$ and $1 \leq d \leq r - 1$ such that $x - 1 \geq r(y-d)$, then $y \leq 2d$.
    \item If $\dimu M \in \cF_r$ and $U \subseteq M$ such that
    \[ r (\dim_\bmk (M/U)_2) > \dim_\bmk (M/U)_1,\]
    then $M/U$ is not preinjective.
    \item A representation $N \in \rep(K_r)$ with $\dim_\bmk N_1 < L_r\dim_\bmk N_2$ is not preinjective.
    \item A representation $N \in \rep(K_r)$ with $2 \leq \dim_\bmk N_2$ and $\dim_\bmk N_1 < 2r$ is not preinjective.
    % \item If $(x,y) \in \cF_r$ and $1 \leq d \leq r - 1$ such that $x - 1 \geq r(y-d)$, then $y \leq 2d$.
    \end{enumerate}
\end{Lemma}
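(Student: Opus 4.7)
The plan is to exploit the explicit formulae $\dimu I_i = (a_{i+1},a_i)$ for the indecomposable preinjective representations of $K_r$, where $a_0=0$, $a_1=1$, $a_2=r$, $a_3=r^2-1$ and $a_{i+1}=ra_i-a_{i-1}$. The sequence $(a_i)_{i \geq 0}$ is strictly increasing for $i \geq 1$, and for $r \geq 3$ one has $a_3 = r^2-1 > 2(r-1)$ and $a_3 = r^2-1 \geq 2r$. These two numerical facts drive the whole lemma.

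For (1), decompose a preinjective $M$ as $\bigoplus_i n_i I_i$, so that $\dim_\bmk M_2 = \sum_i n_i a_i$. If $n_i > 0$ for some $i \geq 3$, then $\dim_\bmk M_2 \geq a_3 > 2(r-1)$, contradicting the hypothesis; hence $M \in \add(I_0 \oplus I_1 \oplus I_2)$. For (2), I would apply (1) first; to exclude $I_2$ as a summand of $M/U$, note that $\dimu M \in \cF_r$ forces $\dim_\bmk M_1 \leq \frac{r}{2}\dim_\bmk M_2 \leq r(r-1) < r^2-1 = \dim_\bmk (I_2)_1$, while $\dim_\bmk (M/U)_1 \leq \dim_\bmk M_1$. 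Writing $M/U = n_0 I_0 \oplus n_1 I_1$ and comparing dimensions then yields $n_1 = \dim_\bmk (M/U)_2$ and $n_0 = \dim_\bmk (M/U)_1 - r\dim_\bmk(M/U)_2 = -\nabla_{M/U}(r)$.

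For (3), (2) gives $M/U \cong n_0 I_0 \oplus n_1 I_1$ with $n_0 \geq 0$, whence $\dim_\bmk(M/U)_1 = n_0 + rn_1 \geq r\dim_\bmk (M/U)_2$, contradicting the hypothesis. For (4), it suffices by \cref{1.2}(3) to show $a_{i+1} > L_r a_i$ for every $i \geq 0$; this is immediate for $i = 0, 1$ and follows inductively from the identity $a_{i+1} - L_r a_i = (r-L_r)a_i - a_{i-1} = \frac{1}{L_r} a_i - a_{i-1}$ combined with $\frac{1}{L_r} = r - L_r$. Summing over indecomposable summands shows that any preinjective $N$ satisfies $\dim_\bmk N_1 \geq L_r \dim_\bmk N_2$, so the hypothesis forces $N$ not to be preinjective. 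Finally for (5), the argument of (1) excludes any summand $I_i$ with $i \geq 2$, since $\dim_\bmk (I_i)_1 \geq a_3 \geq 2r$ for $r \geq 3$ would contradict $\dim_\bmk N_1 < 2r$; hence $N \in \add(I_0 \oplus I_1)$, and $\dim_\bmk N_2 \geq 2$ forces the $I_1$-multiplicity to be at least $2$, giving $\dim_\bmk N_1 \geq 2r$, a contradiction.

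None of these steps is really delicate; the proof is essentially bookkeeping on the sequence $(a_i)$ together with \cref{1.1} and \cref{1.2}. The one point that requires a little more than a direct numerical bound is (2), where the constraint $\dimu M \in \cF_r$ must be combined with (1) to rule out $I_2$ as a summand of $M/U$; the other parts follow by comparison with $a_3$ or by the inductive estimate $a_{i+1} > L_r a_i$.
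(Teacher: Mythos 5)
Your proof is correct and follows essentially the same route as the paper: parts (1), (2), (3) and (5) are handled by comparing $\dim_\bmk M_2$ (resp.\ $\dim_\bmk N_1$) with $a_3 = r^2-1$ and using $\dimu I_0=(1,0)$, $\dimu I_1=(r,1)$, while part (4) follows from \cref{1.2}(3) applied to the indecomposable summands (your inductive verification of $a_{i+1} > L_r a_i$ is harmless but redundant, since it is exactly the content of that corollary for $I_i$). One small remark: your computation $n_1 = \dim_\bmk(M/U)_2$ in (2) agrees with the paper's own proof and implicitly corrects a typo in the statement of the lemma, which reads $\dim_\bmk (M/U)_1\, I_1$ where it should read $\dim_\bmk (M/U)_2\, I_1$.
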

\begin{proof}
\begin{enumerate}
    \item We have $\dim_\bmk (I_l)_2 \geq \dim_\bmk (I_3)_2 = r^2 - 1$ for all $l \geq 3$. Moreover, we have $r^2-1  > 2(r-1) \geq \dim_\bmk M_2$ since $r \geq 2$.
   \item We apply (1) to $M/U$ and know that $M/U \in \add(I_0 \oplus I_1 \oplus I_2)$. Moreover, we have
   \[ \dim_\bmk (M/U)_1 \leq \dim_\bmk M_1 \leq \frac{r}{2} \dim_\bmk M_2 \leq \frac{r}{2} 2(r-1) = r^2 - r < r^2 - 1 = \dim_\bmk (I_2)_1.\]
   Hence $M/U \in \add(I_0 \oplus I_1)$ and therefore $M/U \cong -\nabla_{M/U}(r) I_0 \oplus \dim_\bmk (M/U)_2 I_1$.
   \item This is a direct consequence of (2) since $\dimu I_1 = (r,1)$ and $\dimu I_0 = (1,0)$.
   \item This follows immediately by applying \cref{1.2} to the direct summands of $N$.
   \item We have $\dim_\bmk (I_l)_1 \geq \dim_\bmk (I_2)_1 = r^2 - 1 > 2r$ for all $l \geq 2$. 
   Assume that $N$ is preinjective. Then $N \in \add(I_0 \oplus I_1)$. Since $\dimu I_0 = (1,0)$ and $\dimu I_1 = (r,1)$, we conclude with $\dim_\bmk N_2 \geq 2$ that $\dim_\bmk N_1 \geq 2r$, a contradiction.
\end{enumerate}    
\end{proof}

\subsection{Restricting y}
\begin{proposition}\label{3.3}
Let $(x,y) \in \cF_r$ with $y \geq r$ and $E$ be a representation with dimension vector $\dimu E = (x,y)$. Then $E$ is not elementary.
\end{proposition}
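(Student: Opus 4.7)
By \cref{3.1} it suffices to exhibit a subrepresentation $U\subseteq E$ with $U$ not preprojective and $E/U$ not preinjective. My plan is to produce such $U$ via \cref{2.15} and certify the quotient via \cref{3.2}(3). Throughout, \cref{2.7} applies to $E$, since $q_r$ is decreasing in $y$ on the strip $\{2x/r\leq y\leq x\}$ for $r\geq 3$, so $q_r(x,y)\leq q_r(x,2x/r)=x^2(4/r^2-1)<0$ for every $(x,y)\in\cF_r$.

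The main step is to search for $d\in\{1,\dots,r-1\}$ satisfying both
\[
    d(y-r+d)<x \qquad\text{and}\qquad r(y-r+d)>x-1.
\]
The first inequality is $\nabla_E(d)<d(r-d)$ and triggers \cref{2.15}(i), yielding a non-preprojective subrep $U\subseteq E$ of dimension $(1,r-d)$; the second, applied to $E/U$ of dimension $(x-1,y-r+d)$, forces $r\dim_\bmk(E/U)_2>\dim_\bmk(E/U)_1$ and hence $E/U$ non-preinjective by \cref{3.2}(3). Writing $b:=y-r+d$, these become $b^2+(r-y)b<x$ and $rb>x-1$ for $b\in\{y-r+1,\dots,y-1\}$. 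The two inequalities define a non-empty real interval for $b$ (since $rb$ strictly exceeds $b^2+(r-y)b$ whenever $b<y$), and the $\cF_r$-bound $x\leq ry/2$ keeps it in a compatible range.

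The crux is integer existence. For $y>r$ the admissible range for $b$ contains $r-1$ consecutive integers and the interval widens with $y-r$: for instance $b=y-r+1$ works whenever $y\geq 2r-2$, while for smaller $y$ a middle value of $b$ suffices. For $y=r$ the conditions simplify to $b^2\leq x-1<rb$ with $b\in\{1,\dots,r-1\}$, and the set of admissible $x$-values is $\bigcup_{b=1}^{r-1}[b^2+1,\,rb]$; a direct check shows this union covers the whole interval $[r,\lfloor r^2/2\rfloor]$ for every $r\geq 3$ except the single point $(r,x,y)=(3,4,3)$.

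For this exceptional case I take the dimension vector $(2,2)$ directly: $\langle(2,2),(2,1)\rangle_3=4+2-6=0\geq 0$, so \cref{2.7} yields a subrepresentation $U$ of dimension $(2,2)$ inside every representation of dimension $(4,3)$. Preprojective dimension vectors $\sum n_i\dimu P_i=(\sum n_i a_i,\sum n_i a_{i+1})$ satisfy the strict inequality $\sum n_i a_i<\sum n_i a_{i+1}$ whenever some $n_i>0$, because $a_{i+1}-a_i>0$ for all $i\geq 0$; hence $(2,2)$ is not a preprojective dimension vector and $U$ is non-preprojective. The quotient $E/U$ has dimension $(2,1)$, but $(2,1)\neq aI_0+bI_1=(a+3b,b)$ for any non-negative integers $a,b$, so $E/U$ is not preinjective. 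In either case \cref{3.1} yields that $E$ is not elementary. The main obstacle is the integer-existence casework in the $y=r$ regime; once the coverage claim $\bigcup_{b=1}^{r-1}[b^2+1,rb]\supseteq[r,\lfloor r^2/2\rfloor]$ for $r\geq 4$ is verified, the rest follows smoothly.
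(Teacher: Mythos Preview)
Your strategy—producing a non-preprojective subrepresentation of dimension $(1,r-d)$ via \cref{2.15} and then ruling out preinjectivity of the quotient—is exactly the paper's approach, and your treatment of the exceptional point $(r,x,y)=(3,4,3)$ via \cref{2.7} is a clean alternative to the paper's appeal to Ringel's classification for $K_3$.

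There is, however, a genuine gap in your reliance on \cref{3.2}(3). That statement carries the standing hypothesis $\dim_\bmk M_2\le 2(r-1)$, i.e.\ $y\le 2(r-1)$; without it the inequality $r\dim_\bmk(E/U)_2>\dim_\bmk(E/U)_1$ does \emph{not} preclude preinjectivity of $E/U$ (for instance $I_2$, of dimension $(r^2-1,r)$, satisfies $r\cdot r>r^2-1$). Your plan therefore does not cover the range $y>2(r-1)$—precisely the range where you propose taking $b=y-r+1$. The paper treats this range separately: with the same choice $d=1$ (so $U=U_{r-1}$ and $\dimu E/U=(x-1,y-r+1)$) it establishes the sharper inequality $x-1<L_r(y-r+1)$ by a short computation using $x\le\frac{r}{2}y$, $y\ge 2(r-1)$, and the relation $L_r(r-L_r)=1$, and then invokes \cref{3.2}(4). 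You need to add this computation (or an equivalent argument) to close the gap.

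Once $y\le 2(r-1)$ is in force, your numerical search for $b$ agrees with the paper's: the condition $rb\ge x$ is the paper's $\nabla(d)\le ry-x$, and $b(r-y+b)<x$ is its $\nabla_E(d)<d(r-d)$; the paper simply takes $b$ determined by $r(b-1)<x\le rb$ and verifies the remaining inequality by the same casework you sketch.
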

\begin{proof} Since elementary representations are bricks (see \cite[1.4]{KL96}), we can assume that $E$ is a regular indecomposable representation. From now on we proceed in steps. 
Since $\nabla_E(1) = \dim_\bmk E_2 - \dim_\bmk E_1 \leq 0 < 1(r-1)$
and $\dim_\bmk E_2 = y \geq r - 1$,
we can apply \cref{2.15} and find a non-preprojective subrepresentation $U_{r-1} \subseteq E$ with dimension vector $(1,r-1)$.

At first we assume that $y \geq 2(r-1)$. Then quotient $E/U_{r-1}$ has dimension vector $(x-1,y-r+1)$. We claim that this dimension vector can not belong to a preinjective representation. Indeed, since $r-1 < L_r$, we have $r- 2 L_r < 0$ and conclude 
\begin{align*}
    (x-1)-(y-r+1)L_r &\stackrel{x \leq \frac{r}{2}y}{\leq} (\frac{r}{2}-L_r)y + L_r(r-1)-1 =  (\frac{r-2L_r}{2})y + L_r(r-1)-1 \\
    &\stackrel{y \geq 2(r-1)}{\leq} (\frac{r-2L_r}{2})2(r-1) + L_r(r-1)-1 \\
    &= (r-2L_r)(r-1) + L_r(r-1) - 1  = (r-1)(r-L_r)-1.
\end{align*}  
Recall that $L_r$ is a root of the polynomial $f = X^2 - r X + 1 \in \RR[X]$. Hence \[  (x-1)-(y-r+1)L_r  \leq (r-L_r)(r-1) -  1 = (r-L_r)(r-1) + L_r(L_r - r) = (r- L_r)(r-1 - L_r) < 0, \]
since $r - 1 < L_r < r$. Now \cref{3.2}(4) implies that $E/U_{r-1}$ is not preinjective. We conclude with \cref{3.1} that $E$ is not elementary.

Therefore we can assume from now on that $r \leq y < 2(r-1)$. Given $d \in \NN$ we define 
\[\nabla(d) := r(r-d).\] We begin with the case $\nabla(1) \leq ry-x$. We have $\dimu E/U_{r-1} = (x-1,y-(r-1))$ and  $y-(r-1) \neq 0$. Therefore 
\[ r\dim_\bmk (E/U_{r-1})_2 = r(y-(r-1)) = ry- \nabla(1) \geq x > x - 1 = \dim_\bmk (E/U_{r-1})_1.\]
Since $\dim_\bmk E_2 = y < 2(r-1)$, we can apply \cref{3.2}(3) and conclude that $E/U_{r-1}$ is not preinjective. Now \cref{3.1} implies that $E$ is not elementary.

Now we assume that $ry- x < \nabla(1)$. Since $ry- x \geq r\frac{2}{r}x -x = x \geq y \geq r = \nabla(r-1)$, we find a natural number $2 \leq d \leq r-1$ such that 
\[ \nabla(d) \leq ry - x < \nabla(d-1).\]
We consider two cases:
\begin{itemize}
    \item $d \in \{2,\ldots,r-2\}$, then  $d-(r-1) \leq -1$ and 
\begin{align*}
    \nabla_{(x,y)}(d) - d(r-d) &= \nabla_{(x,y)}(r) -  y(r-d) - d(r-d) < \nabla(d-1) - (y+d)(r-d)\\
    &\leq \nabla(d-1) - (r+d)(r-d) = r(r-d+1)-r^2+d^2 \\
    &= -rd+d^2+r = r + d(d-r) \\
    &= (d-1)(d-(r-1))+1 \leq (d-1) \cdot (-1) + 1 \leq -1 + 1 = 0,
\end{align*}    
By \cref{2.15} there is a subrepresentation $U_{r-d} \subseteq E$ with dimension vector $(1,r-d)$ that is not preprojective. We have $\dimu E/U_{r-d} = (x-1,y-(r-d))$ and the choice of $d$ gives us
\[ r\dim_\bmk (E/U_{r-d})_2 = r(y-(r-d)) = ry - \nabla(d) \geq x > x - 1 = \dim_{\bmk} (E/U_{r-d})_1.\]
Therefore $E/U_{r-d}$ is not preinjective by \cref{3.2}(3) and \cref{3.1} implies that $E$ is not elementary.
\item $d = r - 1$, i.e. $\nabla(r-1) \leq ry- x < \nabla(r-2)$. We get
\[ x > r(y-2) \geq r(\frac{2}{r}x-2) = 2x-2r \Leftrightarrow x < 2r\]
and conclude with $r(y-2) < x < 2r$ that $y < 4$. Since $3 \leq r \leq y < 4$,  we conclude $r = 3$. Hence the statement follows since $\cE_r = \{(1,1),(2,2)\}$ by \cite{Rin16}.
\end{itemize}
\end{proof}

\subsection{Existence of elementary representations}

For $x,y \in \NN_0$, we define
\[ \cE(x,y) \coloneqq \{ g \in \rep(K_r;\bmk^x,\bmk^y) \mid (\bmk^x,\bmk^y,g) \ \text{elementary}\},\]
and
\[ \cB(x,y) \coloneqq   \{ g \in \rep(K_r;\bmk^x,\bmk^y) \mid (\bmk^x,\bmk^y,g) \ \text{brick}\}.\]
Since elementary representations are bricks (see \cite[1.4]{KL96}) we have $\cE(x,y) \subseteq \cB(x,y)$.

We assume from now on that $(x,y) \in \cF_r$. We recall from \cite[1.2.2]{BF24} that $\cB(x,y)$ is a dense subset of $\rep(K_r;\bmk^x,\bmk^y)$ since $q_r(x,y) \leq 1$. \\
In following we determine under which assumptions on $(x,y)$ the set $\cE(x,y)$ is non-empty. Since \cref{3.3} implies that $\cE(x,y) \neq \emptyset$ can only happen for $y < r$, we assume from now on that $y < r$. Before we tackle the general case, we consider an example that illustrates the strategy of proof.

\begin{example}
We have $(6,3) \in \cF_4$ with $3 < 4 = r$ and claim that $(6,3) \in \cE_4$. We have $\nabla_{(6,3)}(3) = 3  \geq 3 = 3(4-3)$. Moreover, we have 
$-\nabla_{(6,3)}(1) = 3 \geq 1(4-1)$. Hence \cref{2.11} and \cref{2.14} imply that $\rep_{\esp}(K_r,3) \cap \rep_{\inj}(K_r,1) \cap \cB(6,3)$ is non-empty. We fix a representation $E$ in the above set. Let $0 \neq U \subseteq E$ a non-preprojective representation. We now show that $E/U$ is preinjective. Since $U$ is not projective, we find $0 \neq u \in U_1$. We consider the subrepresentation $\langle u \rangle$ generated by $u$. Then $\dimu \langle u \rangle = (1,z)$ for some $z \in \{0,1,2,3\}$. Since $\rep_{\esp}(K_r,3)$ is closed under subrepresentations, we have $\langle u \rangle \in \rep_{\esp}(K_r,3)$ and conclude with \cref{2.11} that $3z-1 = \nabla_{(1,z)}(3) \geq 3(4-3) = 3$. Hence $z \geq 2$. Therefore $\dimu E/\langle u \rangle = (5,b)$ with $b \in \{0,1\}$. Since $E \in \rep_{\inj}(K_r,1)$ and $\rep_{\inj}(K_r,1)$ is closed under images (since $\rep_{\proj}(K_r,1)$ is closed under subrepresentation), we have $E/\langle u \rangle  \in \rep_{\inj}(K_r,1)$. Now we apply \cref{2.4} to conclude that $E/\langle u \rangle$ is preinjective. Finally, the presence of the canonical epimorphism $E/\langle u \rangle \lra E/U$ implies that $E/U$ is injective.
\end{example}

Now we consider the general case and start with the following simple observation.
 
\begin{Lemma}\label{3.4}
Let $y = 1$, then $\cE(x,y) = \cB{(x,y)} \neq \emptyset$. 
\end{Lemma}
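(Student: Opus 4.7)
The plan is to leverage \cref{3.1} together with the highly restricted subrepresentation lattice that the assumption $y=1$ imposes. Since $(x,1) \in \cF_r$ forces $1 \leq x \leq r/2$, we have $q_r(x,1) = x^2 - rx + 1 \leq 1$, so by the density result from \cite[1.2.2]{BF24} recalled right before the lemma, $\cB(x,1)$ is a non-empty dense open subset of $\rep(K_r;\bmk^x,\bmk)$. Since elementary representations are bricks by \cite[1.4]{KL96}, we already have $\cE(x,1) \subseteq \cB(x,1)$; thus the whole proof reduces to the reverse inclusion.

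To establish it, pick $E \in \cB(x,1)$. Bricks are indecomposable, and the inequalities $\tfrac{1}{L_r} < 1 \leq x \leq \tfrac{r}{2} < L_r$ combined with \cref{1.2} force $E$ to be regular. It therefore suffices, by \cref{3.1}, to show that every proper non-zero subrepresentation $U \subsetneq E$ has preinjective quotient $E/U$. Writing $\dimu U = (a,b)$ with $0 \leq a \leq x$ and $0 \leq b \leq 1$, I would split on the value of $b$.

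If $b = 0$ and $a \geq 1$, then $U_1$ is a non-zero subspace of $\bigcap_{i=1}^r \ker E(\gamma_i)$, and picking any linear complement $W_1$ of $U_1$ in $E_1$ exhibits a direct sum decomposition $E \cong U \oplus (W_1, E_2, (E(\gamma_i)|_{W_1})_i)$ in $\rep(K_r)$; this contradicts the brick assumption, so this case cannot occur. If instead $b = 1$, then $a < x$ (since $U \neq E$), the quotient $E/U$ has dimension vector $(x-a,0)$, and hence $E/U \cong I_0^{x-a}$ is preinjective (in fact injective). \Cref{3.1} then gives $E \in \cE(x,1)$, finishing the proof.

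I do not anticipate a real obstacle: the case $y = 1$ trivialises the subrepresentation lattice to such an extent that the entire argument reduces to the standard observation that a brick cannot have $I_0$ as a direct summand.
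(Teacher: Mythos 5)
Your proposal is correct and follows essentially the same route as the paper: a brick with $\dim_\bmk E_2=1$ cannot contain $S_1=I_0$ as a subrepresentation (it would split off), so every non-zero proper subrepresentation has $\dim_\bmk U_2=1$ and the quotient is a power of $I_0$, whence \cref{3.1} applies. You merely spell out the regularity check via \cref{1.2} and the splitting argument that the paper leaves implicit.
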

\begin{proof} Let $M \in \cB(x,y)$, then $M$ is indecomposable and regular. Let $U \subseteq M$ be a proper subrepresentation. Then $0 \neq \dim_\bmk U_2$ and therefore $M/U \in \add(I_0)$ is injective. In particular, $M$ is elementary. This shows $\emptyset \neq \cB(x,y) = \cE(x,y)$.
\end{proof}

We assume from now on that $1 < y < r$ and set $b \coloneqq \lceil \frac{x}{r} \rceil \in \NN$ which is the uniquely determined natural number such that
\[ (b-1)r < x \leq br.\] 

\begin{Remarks} \label{3.5}\phantom{.}
\begin{enumerate}
\item We have $1 \leq b < y < r$: Assume that $\lceil \frac{x}{r} \rceil = b \geq y$. Then $\frac{x}{r} > y  - 1$ and therefore
\[ \frac{r}{2}y \geq x = \frac{x}{r} r > ry - r .\]
Hence $2 > y$, a contradiction since we assume $2 \leq y$.
\item We extend to definition of $\rep_{\inj}(K_r,d)$ to $d \in \{0,\ldots,r-1\}$ be setting $\rep_{\inj}(K_r,0) \coloneqq \rep(K_r)$.
\end{enumerate}
\end{Remarks}

\begin{proposition}\label{3.6}
Let $(x,y) \in \cF_r$ with $1 < y < r$ and $b \coloneqq \lceil \frac{x}{r} \rceil$. The following statements hold.
\begin{enumerate}
    \item If $\cE(x,y)$ is non-empty, then 
    \[(b-1)(y + r - (b-1)) \leq x \leq b(r-y+b)\]
    and $\cE(x,y) \subseteq \cB{(x,y)} \cap \rep_{\esp}(K_r,r-y+b) \cap \rep_{\inj}(K_r,b-1)$.
    \item If \[(b-1)(y + r - (b-1)) \leq x \leq b(r-y+b),\]
    then $\cE(x,y)$ is a non-empty open set given by
    \[ \cE(x,y) = \cB{(x,y)} \cap \rep_{\esp}(K_r,r-y+b) \cap \rep_{\inj}(K_r,b-1).\] 
\end{enumerate}
\end{proposition}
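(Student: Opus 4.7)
The plan is to combine Ringel's characterization of elementary representations in \cref{3.1} with the cohomological restrictions from \cref{2.9}, \cref{2.10} and the dimension bounds of \cref{2.11}, \cref{2.12} and \cref{2.14}. For part (1), I would show that any $E \in \cE(x,y)$ lies in $\rep_{\esp}(K_r,r-y+b) \cap \rep_{\inj}(K_r,b-1)$; the claimed bounds on $x$ will then follow from \cref{2.12} (upper bound, applied with $\dim_\bmk E_1 = x \neq 0$) and \cref{2.3} (lower bound, applied with $\dim_\bmk E_2 = y \neq 0$ and $y > b-1$). For the equal-socle containment I argue by contradiction: if $E \not\in \rep_{\esp}(K_r,r-y+b)$, then \cref{2.9}(1) together with \cref{2.10} supplies a non-preprojective subrepresentation $X \subseteq E$ of dimension $(1,a)$ with $a \leq y-b$, and $\dimu E/X = (x-1,y-a)$ satisfies $r(y-a) \geq rb \geq x > x-1$ (using $b = \lceil x/r \rceil$), so \cref{3.2}(3) forces $E/X$ to be non-preinjective, violating \cref{3.1}. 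The relative-injectivity containment (non-trivial only when $b \geq 2$) dualises: a violation gives via \cref{2.9}(2) a non-preprojective subrepresentation $Y \subseteq D_{K_r}(E)$ of dimension $(p,q)$ with $p \in \{1,\ldots,b-1\}$ and $q \leq pr-1$, so $E/U := D_{K_r}^{-1}(Y)^\ast$ is a non-preinjective quotient of $E$ while $U$ has dimension $(x-q,y-p)$ with $x-q \geq 1$ (since $x > (b-1)r \geq q$) and $y - p \leq r - 2 < L_r$. Hence $L_r \dim_\bmk U_1 > \dim_\bmk U_2$, so \cref{1.2}(1) rules out $U$ being preprojective, again contradicting \cref{3.1}.

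For part (2), the inclusion from part (1) gives one direction of the claimed equality, and I would prove the reverse inclusion directly. Fix $E \in \cB(x,y) \cap \rep_{\esp}(K_r,r-y+b) \cap \rep_{\inj}(K_r,b-1)$. Then $E$ is a brick with $\dimu E \in \cF_r$, so $q_r(x,y) \leq 0$ and Kac's theorem gives that $E$ is indecomposable regular. To check \cref{3.1} for a proper non-zero subrepresentation $U \subseteq E$: if $U_1 = 0$ then $U \in \add(P_0)$ is preprojective. Otherwise pick $0 \neq u \in U_1$ and consider the subrepresentation $\langle u \rangle \subseteq U$, of dimension $(1,z)$. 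Since $\rep_{\esp}(K_r,r-y+b)$ is closed under subrepresentations, \cref{2.12} yields $(r-y+b)z - 1 \geq (r-y+b)(y-b)$, and because $r-y+b \geq 2$ (using $y \leq r-1$) this integer inequality sharpens to $z \geq y-b+1$. Hence $\dim_\bmk (E/\langle u \rangle)_2 = y - z \leq b-1$, while $E/\langle u \rangle \in \rep_{\inj}(K_r,b-1)$ (closure under quotients, dual to closure of $\rep_{\proj}(K_r,b-1)$ under subrepresentations), so \cref{2.4}(2) forces $E/\langle u \rangle$ to be injective. Because $b-1 < r = \dim_\bmk(I_2)_2$, only $I_0$ and $I_1$ fit the dimension budget, so $E/\langle u \rangle \cong I_0^s \oplus I_1^t$. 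An inspection of $I_1 = (\bmk^r,\bmk,(\pi_i))$ shows its quotients are either $I_1$ itself or $I_0^j$ ($0 \leq j \leq r$), so every further quotient - in particular $E/U$ - remains in $\add(I_0 \oplus I_1)$ and is preinjective.

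Non-emptiness of the triple intersection follows from the irreducibility of the affine space $\rep(K_r;\bmk^x,\bmk^y)$: $\cB(x,y)$ is open and dense by \cite[1.2.2]{BF24} (as $q_r(x,y) \leq 1$); $\rep_{\esp}(K_r,r-y+b) \cap \rep(K_r;\bmk^x,\bmk^y)$ is open by \cref{2.11}(1) and non-empty by \cref{2.11}(2) precisely under the upper bound $x \leq b(r-y+b)$; and $\rep_{\inj}(K_r,b-1) \cap \rep(K_r;\bmk^x,\bmk^y)$ is open and, by \cref{2.14}, non-empty under the lower bound $x \geq (b-1)(y+r-(b-1))$ (trivially the whole space if $b = 1$, by \cref{3.5}(2)). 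Three non-empty open subsets of an irreducible variety intersect non-trivially. The main obstacle I expect is the bookkeeping in part (2): extracting the sharp integer bound $z \geq y - b + 1$ from the equal socle condition on the cyclic subrepresentation $\langle u \rangle$, and then exploiting $b - 1 < r$ to trap the injective quotient $E/\langle u \rangle$ inside $\add(I_0 \oplus I_1)$, is the key step transporting preinjectivity from $E/\langle u \rangle$ to an arbitrary quotient $E/U$.
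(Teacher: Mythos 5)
Your proposal is correct and follows essentially the same route as the paper: the same contradictions via \cref{3.1} combined with \cref{2.9}, \cref{2.10}/\cref{2.15} and \cref{3.2} establish the two categorical memberships in part (1) (you merely read off the numerical bounds afterwards from \cref{2.11} and \cref{2.3} rather than folding them into the contradiction), and part (2) uses the same cyclic-subrepresentation argument together with openness and irreducibility. The only points to tidy are the case $b=1$, where \cref{2.4}(2) is not applicable as stated (but then $z=y$ forces $\dim_\bmk(E/\langle u\rangle)_2=0$, so the quotient is trivially injective), and the passage from quotients of $I_1$ to quotients of $I_0^s\oplus I_1^t$, which is cleanest via the fact that factor modules of injective modules over a hereditary algebra are injective.
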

\begin{proof}
\begin{enumerate}
    \item Let $E$ be an elementary representation with dimension vector $\dimu E = (x,y)$. We denote by $F \coloneqq D_{K_r}(E)$ the dual representation with dimension vector $(y,x)$. We proceed in steps.
\begin{enumerate}
    \item[(i)] We have $(b-1)(y + r - (b-1)) \leq x$ and $E  \in \rep_{\inj}(K_r,b-1)$: We assume that $x < (b-1)(y + r - (b-1))$ or $E \not\in \rep_{\inj}(K_r,b-1)$. In both cases we conclude $b \neq 1$ and therefore $b-1 \in \{1,\ldots,r-1\}$. If $x < (b-1)(y + r - (b-1))$, we have \[\Delta_{F}(b-1) = x - (b-1)y < (b-1)(r - (b-1)) \ \text{and} \ x \leq \frac{r}{2}y < ry \ \text{gives}\]
  \begin{align*}
      \Delta_F(b-1) &= x - (b-1)y < ry -  (b-1)y = y(r-(b-1)) \\
      &=\dim_\bmk F_1 (r-(b-1)).
  \end{align*} 
Hence \cref{2.2}(3) implies $F\not\in \rep_{\proj}(K_r,b-1)$. If $E \not \in \rep_{\inj}(K_r,b-1)$, we immediately get $F \not \in \rep_{\proj}(K_r,b-1)$ from the definition.\\

\textbf{The book-keeping}: In both cases we arrive at $F \not \in \rep_{\proj}(K_r,b-1)$ with $b-1 \neq 0$.\\

In view of \cref{2.9} we find $a \in \{1,\ldots,b-1\}$ and subrepresentation $Y \not\in \rep_{\proj}(K_r,b-1)$ of $F$ with $\dimu Y =  (a,a')$ and $a' \leq ar-1 \leq  (b-1)r-1$.  The inequality $(b-1)r-1 < x$ ensures that we can extend $Y$ with a semisimple projective direct summand to a subrepresentation $Y$ of $F$ with dimension vector $(a,(b-1)r-1)$ that satisfies $Y \not\in \rep_{\proj}(K_r,b-1)$. In particular, $Y$ is not preprojective by \cref{2.9}.

Since $F$ is elementary, we can apply \cref{3.1} to conclude that $(y-a,x-(b-1)r+1) = \dimu F/X$ belongs to a preinjective representation. But this is impossible since $x-(b-1)r+1 \geq 1$, $y-a < r$ and $\dim_\bmk (I_l)_1 \geq r$ for all $l \geq 1$ and $\dimu I_0 = (1,0)$.
Hence $(b-1)(y + r - (b-1)) \leq x$ and $E \in \rep_{\inj}(K_r,b-1)$.
\item[(ii)] We have $x \leq b(r-y+b)$ and $E \in \rep_{\esp}(K_r,r-y+b)$:
We assume that $x > b(r-y+b)$. We set $d \coloneqq r - (y-b)$ and note that $d \in \{1,\ldots,r-1\}$ by \cref{3.5}. We get 
\begin{align*}
    \nabla_E(d) - d(r-d) &= d(y-r+d)-x \\
    &= b(r-y+b) - x < 0.
\end{align*} 
Hence $E \not\in \rep_{\esp}(K_r,d)$ by \cref{2.11}. Since $r - d = y - b \leq y$, we conclude with \cref{2.15} that there exists a non-preprojective subrepresentation $U_{r-d} \subseteq E$ with dimension vector $(1,r-d)$. Once again we apply \cref{3.1} and conclude that $E/U_{r-d}$ with dimension vector $(x-1,y-(r-d)) = (x-1,b)$ is preinjective. We apply \cref{3.2}(3) and conclude  $br \leq x - 1$. But this is a contradiction to the definition of $b$ since $x \leq br$.\\
We note that this also shows $E \in \rep_{\esp}(K_r,d) =  \rep_{\esp}(K_r,r-y+b)$. 
\end{enumerate}
\item We set $d \coloneqq  r - (y-b) \in \{1,\ldots,r-1\}$ and have 
\[ \nabla_E(d) - d(r-d) = d(y-r+d) - x = b(r-y+b) - x \geq 0,\]
and 
\[ \quad -\nabla_E(b-1) = x - (b-1)y \geq  (b-1)(r-(b-1)).\]
We can apply \cref{2.11} and \cref{2.14} (for $b \neq 1$) to conclude that $\cB{(x,y)} \cap \rep_{\esp}(K_r,d) \cap \rep_{\inj}(K_r,b-1)$ is non-empty (for $b = 1$ we have $\rep_{\inj}(K_r,b-1) = \rep(K_r)$). We fix a representation $E$ in this space and show now that $E$ is elementary.\\
Let $U \subseteq E$ be a non-preprojective representation, then we find $u \in U \setminus \{0\}$. Recall from \cref{Section2.2} that $\rep_{\esp}(K_r,d)$ is closed under subrepresentation. Therefore the subrepresentation $\langle u \rangle$ generated by $u$ is in $\rep_{\esp}(K_r,d)$ and $\dimu \langle u \rangle = (1,z)$ for some $z \in \{1,\ldots,y\}$. We conclude with \cref{2.11} that 
\[ dz - 1  = \nabla_{\langle u \rangle}(d) \geq d(r-d) \Leftrightarrow d(z-(r-d)) \geq 1.\]
In particular, $z \geq r-d +1 = y - (b-1)$. In other words, $E/\langle u \rangle$ satisfies $\dimu E/\langle u \rangle = (x-1,a)$ with $0 \leq a \leq b-1$. 
If $b = 1$, we conclude that $a = 0$ and therefore $\dimu E/\langle u \rangle$ is injective and the the presence of the canonical epimorphism $E/\langle u \rangle \lra E/U$ implies that $E/U$ is injective.
If $b \neq 1$ we have $E \in \rep_{\inj}(K_r,b-1)$ with $b-1 \neq 0$. Since $\rep_{\inj}(K_r,b-1)$ is closed under images (since $\rep_{\proj}(K_r,b-1)$ is closed under subrepresentation), we have $\dimu E/\langle u \rangle  \in \rep_{\inj}(K_r,b-1)$ and can apply \cref{2.4} to conclude that $E/\langle u \rangle$ is injective and presence of the canonical epimorphism $E/\langle u \rangle \to E/U$
implies that $E/U$ is preinjective. 
Hence $E$ is elementary by \cref{3.1}.\\

\noindent \textbf{The book-keeping}: We have shown that
\[ \emptyset \neq \cB{(x,y)} \cap \rep_{\esp}(K_r,r-y+b) \cap \rep_{\inj}(K_r,b-1) \subseteq \cE(x,y). \]
Now we apply (1) to finish the proof.
\end{enumerate} 
\end{proof}

\begin{Remark}\label{3.7} We extend the definition of $\rep_{\esp}(K_r,d)$ to $\{1,\ldots,r\}$ by setting $\rep_{\esp}(K_r,r) \coloneqq \rep(K_r)$.
\end{Remark}

\begin{Theorem}\label{3.8}
Let $(x,y) \in \cF_r$.
\begin{enumerate}
    \item $\cE(x,y) \neq \emptyset$ implies $y < r$.
    \item For $y < r$ the following statements are equivalent.
    \begin{enumerate}
    \item[(i)] $\cE(x,y) \neq \emptyset$.
    \item[(ii)] $(\lceil \frac{x}{r} \rceil -1)(y+r-(\lceil\frac{x}{r} \rceil-1)) \leq x \leq \lceil\frac{x}{r}\rceil(r-y+\lceil\frac{x}{r}\rceil)$.
\end{enumerate}
If one the equivalent statements holds, we have
\[\cE(x,y) =   \cB{(x,y)} \cap \rep_{\esp}(K_r,r-y+\lceil \frac{x}{r} \rceil) \cap \rep_{\inj}(K_r,\lceil \frac{x}{r} \rceil -1).\] 
\end{enumerate}
\end{Theorem}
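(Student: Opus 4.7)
The theorem is essentially a consolidation of Proposition~3.3, Lemma~3.4 and Proposition~3.6, and my plan is to assemble these pieces and verify that the unified formula in Part~(2) agrees with Lemma~3.4 at the edge case $y = 1$. For Part~(1), I would invoke Proposition~3.3 directly: any non-zero representation with $(x,y) \in \cF_r$ and $y \geq r$ fails to be elementary, so $\cE(x,y) = \emptyset$, and Part~(1) is its contrapositive.

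For Part~(2), I would split on the value of $y$. The main case $1 < y < r$ is exactly the content of Proposition~3.6, with $b = \lceil x/r \rceil$: its clause~(1) provides the implication (i)~$\Rightarrow$~(ii) together with the inclusion $\cE(x,y) \subseteq \cB(x,y) \cap \rep_{\esp}(K_r, r-y+b) \cap \rep_{\inj}(K_r, b-1)$, while clause~(2) provides the reverse implication and the reverse inclusion. The remaining case is $y = 1$ (the formal value $y = 0$ forces $x = 0$ via $(x,y) \in \cF_r$, which is vacuous since elementary representations are non-zero). For $y = 1$, the membership $(x,1) \in \cF_r$ yields $1 \leq x \leq r/2$, hence $b = \lceil x/r \rceil = 1$; condition~(ii) then reads $0 \leq x \leq r$, which holds trivially, and Lemma~3.4 gives $\cE(x,1) = \cB(x,1) \neq \emptyset$. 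Invoking the conventions $\rep_{\inj}(K_r, 0) = \rep(K_r)$ from Remark~3.5(2) and $\rep_{\esp}(K_r, r) = \rep(K_r)$ from Remark~3.7, the unified formula collapses to $\cB(x,1)$, matching Lemma~3.4 exactly.

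There is no substantive obstacle here: the heavy lifting has been done in Proposition~3.6 (identifying the elementaries of dimension $(x,y) \in \cF_r$ with $1 < y < r$ as precisely the bricks in the generic subvariety cut out by the open conditions $\rep_{\esp}(K_r, r-y+b)$ and $\rep_{\inj}(K_r, b-1)$) and Proposition~3.3 (the sharp ceiling $y < r$). What remains is simply the bookkeeping of slotting the $y = 1$ edge case into the unified formula via the two extended conventions, which is straightforward.
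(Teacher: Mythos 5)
Your proposal is correct and follows essentially the same route as the paper: Part (1) is Proposition 3.3, and Part (2) splits into the case $y=1$ (handled by Lemma 3.4 together with the conventions $\rep_{\inj}(K_r,0)=\rep(K_r)$ and $\rep_{\esp}(K_r,r)=\rep(K_r)$, after checking that (ii) is automatic since $b=1$) and the case $1<y<r$ (which is exactly Proposition 3.6). Nothing further is needed.
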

\begin{proof}
\begin{enumerate}
\item This is the statement of \cref{3.3}.
\item At first we assume that $y = 1$. Then $x \leq \frac{r}{2}y < r$ and $\lceil \frac{x}{r} \rceil = 1$. So in this case the inequalities in (ii) are always satisfied and by \cref{3.4} we have $\cE(x,y) = \cB(x,y) \neq \emptyset$ as well as
\begin{align*}
     \cE(x,y) &= \cB(x,y) = \cB(x,y) \cap \rep_{\esp}(K_r,r) \cap \rep_{\inj}(K_r,0) \\ 
     &=\cB(x,y) \cap  \rep_{\esp}(K_r,r-y+\lceil \frac{x}{r} \rceil) \cap \rep_{\inj}(K_r,\lceil \frac{x}{r} \rceil - 1).
\end{align*}
Now we assume that $1 < y < r$. Then the equivalence of (i) and (ii) is precisely the statement of \cref{3.6}.
\end{enumerate}
\end{proof}

\begin{corollary}\label{3.9}
Let $(x,y) \in \cF_r$ such that $y \leq x < r$. The following statements are equivalent:
\begin{enumerate}
    \item $\cE(x,y) \neq \emptyset$.
    \item $x +y \leq r + 1$.
\end{enumerate}
In this case we have
\[\cE(x,y) =   \cB{(x,y)} \cap \rep_{\esp}(K_r,r-y+1).\] 
\end{corollary}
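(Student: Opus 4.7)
The plan is to deduce \cref{3.9} as an immediate specialization of \cref{3.8}. First I would verify that the hypothesis $y \leq x < r$ places us squarely in the scope of \cref{3.8}(2): since an elementary representation is non-zero, we may assume $x \geq 1$, so $1 \leq x < r$ gives $b := \lceil x/r \rceil = 1$, and $y \leq x < r$ gives $y < r$.

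Next, I would substitute $b = 1$ into the double inequality of \cref{3.8}(2)(ii). The lower bound $(b-1)(y+r-(b-1)) \leq x$ collapses to $0 \leq x$, which is automatic, and the upper bound $x \leq b(r-y+b)$ collapses to $x \leq r-y+1$, i.e.\ $x+y \leq r+1$. Thus condition (ii) of \cref{3.8} is in this situation equivalent to $x+y \leq r+1$, which gives the desired equivalence (1)$\Leftrightarrow$(2).

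Finally, for the explicit description of $\cE(x,y)$ when (1) and (2) hold, I would substitute $b = 1$ into the formula from \cref{3.8}(2), yielding
\[ \cE(x,y) = \cB(x,y) \cap \rep_{\esp}(K_r, r-y+1) \cap \rep_{\inj}(K_r, 0). \]
Using the convention $\rep_{\inj}(K_r,0) = \rep(K_r)$ from \cref{3.5}(2), the last factor is redundant, and we obtain the stated identity $\cE(x,y) = \cB(x,y) \cap \rep_{\esp}(K_r, r-y+1)$. No genuine obstacle arises: the only mild subtlety is ensuring the formalism of \cref{3.8} covers both the $y = 1$ and $1 < y < r$ sub-cases uniformly, but this has already been arranged in the statement of \cref{3.8} (via \cref{3.4} and the convention of \cref{3.7}), so the corollary follows without further case analysis.
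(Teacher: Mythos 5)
Your proposal is correct and follows essentially the same route as the paper: both apply \cref{3.8} with $\lceil x/r\rceil = 1$ (automatic since $1 \leq x < r$), observe that the double inequality reduces to $x \leq r-y+1$, and drop the redundant factor $\rep_{\inj}(K_r,0) = \rep(K_r)$.
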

\begin{proof}
 
We have $\lceil \frac{x}{r} \rceil = 1$. Hence $(x,y) \in \cF$ satisfies the inequality of the above Theorem if and only if $x \leq r - y + 1$. Moreover, we have in this case
\[ \cE(x,y) = \cB{(x,y)} \cap \rep_{\esp}(K_r,r-y+1) \cap \rep_{\inj}(K_r,0) = \cB{(x,y)} \cap \rep_{\esp}(K_r,r-y+1).\]
\end{proof}

\begin{corollary}\label{3.10}
Let $(x,y) \in \cF_r$. The following statements are equivalent.
\begin{enumerate}
    \item $\cE(x,y) \neq \emptyset$.
    \item $\lfloor \frac{x}{r} \rfloor (y+r-\lfloor \frac{x}{r} \rfloor) \leq x \leq \lceil\frac{x}{r}\rceil(r-y+\lceil\frac{x}{r}\rceil)$ and $y < r$.
    \item $ y  \leq \min \{  \lfloor \frac{x}{r} \rfloor+\frac{x}{\lfloor \frac{x}{r} \rfloor}  -r, \lceil \frac{x}{r} \rceil   -\frac{x}{\lceil \frac{x}{r} \rceil} +r,r-1\}$, where we interpret $\lfloor \frac{x}{r} \rfloor+\frac{x}{\lfloor \frac{x}{r} \rfloor}  -r$ as $\infty$ for $1 \leq x < r$.
    \end{enumerate}
    If one of the equivalent statements holds, we have
\[\cE(x,y) =   \cB{(x,y)} \cap \rep_{\esp}(K_r,r-y+\lceil \frac{x}{r} \rceil) \cap \rep_{\inj}(K_r,\lfloor \frac{x}{r} \rfloor).\] 
\end{corollary}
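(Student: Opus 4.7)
\smallskip

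\noindent\textbf{Proof plan for \cref{3.10}.} The plan is to reduce everything to \cref{3.8} by algebraic manipulation of the bounds, with the only subtle point being the reconciliation of $\lceil x/r\rceil-1$ (appearing in \cref{3.8}) with $\lfloor x/r\rfloor$ (appearing in \cref{3.10}).

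\smallskip

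\noindent\emph{Step 1: (1) $\Leftrightarrow$ (2).} I split into two cases according to divisibility. If $r \nmid x$, then $\lceil x/r\rceil - 1 = \lfloor x/r\rfloor$, so the condition (ii) of \cref{3.8} is literally the inequality appearing in (2), and the equivalence is immediate. If $r \mid x$, set $k := x/r \geq 1$; then both $\lfloor x/r\rfloor$ and $\lceil x/r\rceil$ equal $k$, and the upper bound in either (ii) of \cref{3.8} or in (2) reduces to $y \leq k$, while $(x,y)\in \cF_r$ forces $y \geq 2x/r = 2k$, contradicting $k \geq 1$. Hence both sides are empty/false and the equivalence holds vacuously. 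As a by-product, I record that $\cE(x,y) \neq \emptyset$ forces $r \nmid x$; this will be crucial in the last step for reconciling the two formulas.

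\smallskip

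\noindent\emph{Step 2: (2) $\Leftrightarrow$ (3).} This is a routine rearrangement solving each inequality for $y$. Dividing the upper bound $x \leq \lceil x/r\rceil(r - y + \lceil x/r\rceil)$ by $\lceil x/r\rceil \geq 1$ yields $y \leq \lceil x/r\rceil - x/\lceil x/r\rceil + r$. For the lower bound, when $x \geq r$ (so $\lfloor x/r\rfloor \geq 1$), dividing by $\lfloor x/r\rfloor$ gives $y \leq \lfloor x/r\rfloor + x/\lfloor x/r\rfloor - r$; when $1 \leq x < r$, the inequality $\lfloor x/r\rfloor(y+r-\lfloor x/r\rfloor) \leq x$ degenerates to $0 \leq x$ which is automatic, matching the convention that the first entry in the minimum is $\infty$ in that range. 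Finally, $y < r$ with $y \in \NN$ is equivalent to $y \leq r-1$.

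\smallskip

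\noindent\emph{Step 3: The formula for $\cE(x,y)$.} Assuming one of the equivalent conditions holds, by Step 1 we know $r \nmid x$ and hence $\lfloor x/r\rfloor = \lceil x/r\rceil - 1$. Substituting this into the formula from \cref{3.8},
\[
\cE(x,y) \;=\; \cB(x,y) \cap \rep_{\esp}(K_r, r - y + \lceil x/r\rceil) \cap \rep_{\inj}(K_r, \lceil x/r\rceil - 1),
\]
gives precisely the formula claimed in \cref{3.10}.

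\smallskip

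The main (and essentially only) obstacle is Step 1: one must notice that the shift between $\lceil x/r\rceil - 1$ and $\lfloor x/r\rfloor$ only matters when $r \mid x$, and then verify that in this degenerate case both conditions fail simultaneously by confronting them with the defining inequalities of $\cF_r$. Once this is in place, Steps 2 and 3 are straightforward bookkeeping.
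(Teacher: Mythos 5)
Your proposal is correct and follows essentially the same route as the paper: reduce everything to \cref{3.8}, dispose of the case $r \mid x$ by observing that the upper bound then forces $y \leq x/r$ while $(x,y)\in\cF_r$ forces $y \geq 2x/r$ (the paper phrases this as $x<ry$ contradicting the upper bound), and for $r\nmid x$ use $\lceil x/r\rceil-1=\lfloor x/r\rfloor$ both for the equivalence of (1) and (2) and for the final formula, with (2)$\Leftrightarrow$(3) a direct rearrangement (the paper additionally cites \cref{3.9} for the range $x<r$, which you instead handle via the $\infty$-convention — an inessential difference). No gaps.
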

\begin{proof} Assume that $\frac{x}{r} \in \NN$. In this case we have $\frac{x}{r} = \lceil \frac{x}{r} \rceil$ and 
\[ x < ry\Leftrightarrow x > x - \frac{x}{r}y + (\frac{x}{r})^2 \Leftrightarrow x > \lceil\frac{x}{r}\rceil(r-y+\lceil\frac{x}{r}\rceil).\]
Now \cref{3.8} implies $\cE(x,y) = \emptyset$. Hence we can assume $\frac{x}{r} \not\in \NN$. Then $\lceil\frac{x}{r}\rceil -  1 = \lfloor \frac{x}{r} \rfloor$ and \cref{3.8} implies the equivalence of (1) and (2).\\
The equivalence of (2) and (3) follows from direct computation and \cref{3.9}.
\end{proof}

\begin{proposition}
We have
\[\cE_r = \{ (x,y) \in \NN_{\leq \frac{r(r-1)}{2}}\times \NN_{\leq r-1} \mid \frac{2x}{r} \leq y \leq \min \{  \lfloor \frac{x}{r} \rfloor+\frac{x}{\lfloor \frac{x}{r} \rfloor}  -r, \lceil \frac{x}{r} \rceil   -\frac{x}{\lceil \frac{x}{r} \rceil} +r,x\}\}.\]
\end{proposition}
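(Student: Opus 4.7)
The plan is to deduce this proposition directly from \cref{3.10} by translating between two equivalent descriptions. Recall that by definition $\cE_r = \{(x,y) \in \cF_r \mid \cE(x,y) \neq \emptyset\}$, and that $\cF_r$ is cut out by the two inequalities $\frac{2x}{r} \leq y$ and $y \leq x$. Introducing the shorthand $A \coloneqq \lfloor \frac{x}{r} \rfloor+\frac{x}{\lfloor \frac{x}{r} \rfloor}-r$ and $B \coloneqq \lceil \frac{x}{r} \rceil-\frac{x}{\lceil \frac{x}{r} \rceil}+r$ (with the convention that $A=\infty$ for $1 \leq x < r$), \cref{3.10} says: for $(x,y) \in \cF_r$ we have $\cE(x,y) \neq \emptyset$ if and only if $y \leq \min\{A, B, r-1\}$. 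So the task is purely book-keeping: match this with the description $x \in \NN_{\leq r(r-1)/2}$, $y \in \NN_{\leq r-1}$, $\frac{2x}{r} \leq y \leq \min\{A, B, x\}$.

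For the forward inclusion I would take $(x,y) \in \cE_r$ and read off the following: from $(x,y) \in \cF_r$ I get $\frac{2x}{r} \leq y$ and $y \leq x$; from \cref{3.10} I get $y \leq A$, $y \leq B$ and $y \leq r-1$. The inequality $y \leq r-1$ is then encoded as $y \in \NN_{\leq r-1}$, while $y \leq x$ combines with $y \leq A$ and $y \leq B$ into the single bound $y \leq \min\{A, B, x\}$. The only slightly less trivial step is the bound $x \leq \frac{r(r-1)}{2}$, which is obtained by chaining $\frac{2x}{r} \leq y \leq r-1$, giving $2x \leq r(r-1)$.

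For the reverse inclusion I would take $(x,y)$ in the right-hand side and recover both $\cF_r$-membership and the hypothesis of \cref{3.10}. From $y \leq \min\{A, B, x\}$ I extract $y \leq x$, so together with $\frac{2x}{r} \leq y$ this gives $(x,y) \in \cF_r$. The three inequalities $y \leq A$, $y \leq B$ and $y \leq r-1$ (the last coming from $y \in \NN_{\leq r-1}$) then combine into $y \leq \min\{A, B, r-1\}$, and \cref{3.10} yields $\cE(x,y) \neq \emptyset$. This closes the equivalence.

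There is no real mathematical obstacle left at this point, since the hard content (relating the existence of elementary representations to the arithmetic inequalities governing $A$, $B$ and $r-1$) has already been established in \cref{3.6}, \cref{3.8} and \cref{3.10}. The only thing to be careful about is the interplay between the $y \leq r-1$ bound, which in the proposition lives inside the index set $\NN_{\leq r-1}$ rather than inside the minimum, and the resulting automatic bound $x \leq r(r-1)/2$; these are the two cosmetic differences between the form in \cref{3.10} and the form used in the proposition.
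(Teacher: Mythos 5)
Your proposal is correct and follows essentially the same route as the paper: the paper's own proof is just the same bookkeeping, invoking \cref{3.10} and noting that $\frac{2x}{r} \leq y \leq r-1$ forces $x \leq \frac{r(r-1)}{2}$, with the bounds $y \leq x$ and $y \leq r-1$ redistributed between the index set and the minimum exactly as you describe.
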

\begin{proof}
Recall that $(x,y) \in \cF_r$ with $\cE(x,y)$ implies $\frac{2x}{r} \leq y \leq x$ and $y \leq r - 1$. In particular, $x \leq \frac{r(r-1)}{2}$.
\end{proof}

\begin{examples} In the following we discuss the cases $r = 3,4$ in detail to illustrate how to apply our formulas.
\begin{enumerate}

\item The case $r = 3$. We have $1 \leq x \leq \frac{r(r-1)}{2} = 3$ and $1 \leq y \leq r-1 = 2$. We consider the inequalties
\begin{align*}
    &\underline{x = 1}:  \frac{2}{3} \leq y \leq \min \{\infty,3,x = 1, r-1=2\} = 1,
    &\underline{x = 2}:  \frac{4}{3} \leq y \leq \min \{\infty,2,2,2\} = 2, \\
    &\underline{x = 3}:  2 \leq y \leq \min \{1,1,3,2\} = 1.
\end{align*}
This shows $\cE_3 = \{(1,1),(2,2)\}$. Moreover, we have
\[ \cE(1,1) = \cB{(1,1)} \cap \rep_{\esp}(K_3,3) \cap \rep_{\inj}(K_3,0) = \cB{(1,1)} \ \text{and}\] 
\[ \cE(2,2) = \cB{(2,2)} \cap \rep_{\esp}(K_3,2) \cap \rep_{\inj}(K_3,0) = \cB{(1,1)} \cap \rep_{\esp}(K_3,2).\] 
The following figure on the left-hand side shows the elementary dimensions vector in $\cE_3$ and the figure on the right hand side shows $\widetilde{\cE}_3$. The dashed red line is the restriction $y \leq r - 1 = 2$.
\begin{center}
\begin{minipage}[t]{0.35\textwidth}
\begin{tikzpicture}[scale=.5]
   \tkzInit[xmax=7.5,ymax=7.5,xmin=0,ymin=0]
   \tkzGrid
   \tkzAxeXY
    \node[] at (1,1) {$\bullet$};
   \node[] at (2,2) {$\bullet$};
   \node[color=black]  at (6,5) {$\cF_3$};
   \draw[thick] (0,0) -- (7.5,7.5) node[anchor=south west] {}; % two points for drawing 2x+y=2
   \draw[thick] (0,0) -- (7.5,15/3) node[anchor=south west] {}; % two points for drawing 2x+y=2
  \tkzText[above](0,6.75){}
  \begin{scope}[on background layer]
  
  \draw[fill = gray!20] (7.5,7.5) node[anchor=north]{}
  -- (0,0) node[anchor=north]{}
  -- (7.5,15/3) node[anchor=north]{};
  
    \draw[thick, dashed, red] (0,2) -- (7.5,2) node[anchor=south west] {};
  
   \end{scope}
  \end{tikzpicture}
  \end{minipage}    
  \begin{minipage}[t]{0.35\textwidth}
\begin{tikzpicture}[scale=.5]
   \tkzInit[xmax=7.5,ymax=7.5,xmin=0,ymin=0]
   \tkzGrid
   \tkzAxeXY
   \draw[ thick,dashed] (0,0) -- (4,8) node[anchor=south west] {}; % two points for drawing 2x+y=2
   \draw[ thick] (0,0) -- (7.5,7.5/2) node[anchor=south west] {}; % two points for drawing 2x+y=2
   \node[color=black]  at (3.5,5) {$\cC_3$};
   \node[] at (1,1) {$\bullet$};
   \node[] at (2,1) {$\bullet$};
   \node[] at (2,2) {$\bullet$};
   \node[] at (4,2) {$\bullet$};
  \tkzText[above](0,6.75){}
  \end{tikzpicture}
  \end{minipage}    
  
\end{center}

\item The case $r = 4$.  We have $1 \leq x \leq \frac{r(r-1)}{2} = 6$ and $y \leq r-1 = 3$. We consider the inequalties
 \begin{align*}
    &\underline{x = 1}: \frac{1}{2} \leq y \leq \min \{\infty,4,1,3\} = 1, &\underline{x = 2}: 1 \leq y \leq \min \{\infty,3,2,3\} = 2\\
    &\underline{x = 3}: \frac{3}{2} \leq y \leq \min \{\infty,2,3,3\} = 2,  &\underline{x = 4}: 2 \leq y \leq \min \{1,1,4,3\} = 1\\
    &\underline{x = 5}: \frac{5}{2} \leq y \leq \min \{2,\frac{7}{2},5,3\} = 2,   &\underline{x = 6}: 3 \leq y \leq \min \{3,3,6,3\} = 3.
\end{align*} 
Hence $\cE_4 = \{ (1,1),(2,1),(2,2),(3,2),(6,3)\}$.
Moreover, we have
\begin{align*}
    \cE(1,1) &= \cB{(1,1)} \cap \rep_{\esp}(K_4,4) \cap \rep_{\inj}(K_4,0) = \cB(1,1),\\
    \cE(2,1) &= \cB{(2,1)} \cap \rep_{\esp}(K_4,4) \cap \rep_{\inj}(K_4,0) = \cB(2,1),\\
    \cE(2,2) &= \cB{(2,2)} \cap \rep_{\esp}(K_4,3) \cap \rep_{\inj}(K_4,0) = \cB{(2,2)} \cap \rep_{\esp}(K_4,3),\\
    \cE(3,2) &= \cB{(3,2)} \cap \rep_{\esp}(K_4,3) \cap \rep_{\inj}(K_4,0) = \cB{(3,2)} \cap \rep_{\esp}(K_4,3),\\
    \cE(6,3) &= \cB{(6,3)} \cap \rep_{\esp}(K_4,3) \cap \rep_{\inj}(K_4,1).
\end{align*}
The figure on the left-hand side shows the elementary dimensions vector in $\cE_4$ and the figure on the right hand side shows $\widetilde{\cE}_4$.
\begin{center}
\begin{minipage}[t]{0.35\textwidth}
    \begin{tikzpicture}[scale=.5]
   \tkzInit[xmax=9,ymax=7.5,xmin=0,ymin=0]
   \tkzGrid
   \tkzAxeXY
   \begin{scope}[on background layer]
    \draw[fill = gray!20,draw=none] (7.5,7.5) node[anchor=north]{}
  -- (0,0) node[anchor=north]{}
  -- (9,4.5) node[anchor=north]{}
  -- (9,7.5) node[anchor=north]{}
  (7.5,7.5) node[anchor=north]{};
 \end{scope}
  
   \draw[ thick] (0,0) -- (7.5,7.5) node[anchor=south west] {}; % two points for drawing 2x+y=2
   \draw[ thick] (0,0) -- (9,4.5) node[anchor=south west] {}; % two points for drawing 2x+y=2
     \draw[thick, dashed, red] (0,3) -- (9,3) node[anchor=south west] {};
   
    \node[color=black]  at (6,5) {$\cF_4$};
   \node[] at (1,1) {$\bullet$};
   \node[] at (2,1) {$\bullet$};

   \node[] at (2,2) {$\bullet$};
   \node[] at (3,2) {$\bullet$};

   \node[] at (6,3) {$\bullet$};

  \tkzText[above](0,6.75){}
  \end{tikzpicture}
  \end{minipage}
  \begin{minipage}[t]{0.35\textwidth}
   \begin{tikzpicture}[scale=.5]
   \tkzInit[xmax=7,ymax=7.5,xmin=0,ymin=0]
   \tkzGrid
   \tkzAxeXY
   \draw[ thick,  dashed] (0,0) -- (2.5,2.5*3) node[anchor=south west] {}; % two points for drawing 2x+y=2
   \draw[ thick] (0,0) -- (7,7/3) node[anchor=south west] {}; % two points for drawing 2x+y=2
   \node[color=black]  at (3.5,5) {$\cC_4$};

   \node[] at (1,1) {$\bullet$};
   \node[] at (2,1) {$\bullet$};
    \node[] at (3,1) {$\bullet$};
    
   \node[] at (1,2) {$\bullet$};
   \node[] at (2,2) {$\bullet$};
   \node[] at (3,2) {$\bullet$};
  \node[] at (5,2) {$\bullet$};
  
 \node[] at (2,3) {$\bullet$};
   \node[] at (6,3) {$\bullet$};
 
   \node[] at (2,5) {$\bullet$};
 
  \node[] at (3,6) {$\bullet$};
    
  \tkzText[above](0,6.75){}
  \end{tikzpicture}
  \end{minipage}
\end{center}

\end{enumerate}
\end{examples}

\section{Orbits of elementary representations}

It has been shown in \cite{Rin16} that elementary representations $E$ with dimension vector in $\cE_3 = \{(1,1),(2,2)\}$ can be described combinatorially in terms of their coefficient quiver. More precisely: There exists a basis $\alpha,\beta,\gamma$ of the arrow space $A_3$ such that the coefficient quiver of $E$ has one of the following two forms:

\[
\xymatrix{
\bullet \ar[d]^{\alpha} & & & \bullet \ar[d]_{\alpha} \ar@/^/[dr]_>>{\beta} & \bullet \ar[d]^{\alpha} \ar@/_/[dl]^>>{\gamma} \\
\bullet & & & \bullet & \bullet.
}
\]
In the following we rephrase this result in terms of an algebraic group acting on the variety of representations. Let $V_1,V_2$ be vector spaces. We consider the canonical action of the general linear group $\GL(A_r)$ on $\rep(K_r;V_1,V_2)$: Given $g \in \GL(A_r)$ and $f \in \rep(K_r;V_1,V_2)$, we write $g^{-1}(\gamma_i) = \sum^r_{j=1} \lambda_{ij}^{(g)} \gamma_j$ with $\lambda_{ij}^{(g)} \in \bmk$ for all $i \in \{1,\ldots,r\}$ and let $f^{(g)} \in \rep(K_r;V_1,V_2)$ be the tuple with entries
\[ (f^{(g)})_i = \sum^r_{j=1} \lambda_{ij}^{(g)} f_j, 1 \leq i \leq r.\]
The algebraic group 
\[G_{(V_1,V_2)} \coloneqq \GL(A_r) \times \GL(V_2) \times \GL(V_1)\] 
acts on the space of representations $\rep(K_r;V_1,V_2)$ via
\begin{align*}
    G_{(V_1,V_2)} \times \rep(K_r;V_1,V_2) &\lra \rep(K_r;V_1,V_2) \\
    ((g,h_2,h_1),f) &\mapsto ((h_2 \circ f_i \circ h_1^{-1})_{1 \leq i \leq r})^{(g)} = (h_2 \circ (f^{(g)})_i \circ h_1^{-1})_{1 \leq i \leq r}.
\end{align*} 
Note that $\dim G_{(V_1,V_2)} = r^2 + (\dim_\bmk V_1)^2 + (\dim_\bmk V_2)^2$.
Moreover, we have an action of $\GL(A_r)$ on $\rep(K_r)$ 
\[ \GL(A_r) \times \rep(K_r) \lra \rep(K_r) ; (g,N) \mapsto N^{(g)} \coloneqq (N_1,N_2,(N(\gamma_i))_{1 \leq i \leq r}^{(g)})\]
and an induced action on the isomorphism classes of Kronecker representations $[N]^{(g)} \coloneqq [N^{(g)}]$.
\bigskip
Now let $M_1,M_2$ be vector spaces and $\emptyset \neq \cO \subseteq \rep(K_r;M_1,M_2)$ be a $G_{(M_1,M_2)}$-invariant subset. We let $[\cO] \coloneqq \{ [N] \mid N \in \rep(K_r), \exists f \in \cO : N \cong (M_1,M_2,f) \}$. By definition we have a one-to-one correspondence between $\cO/G_{(M_1,M_2)}$ and $[\cO]/\GL(A_r)$. For $(x,y) \in \NN^2$ we let 
\[ G_{(x,y)} \coloneqq \GL(A_r) \times \GL(\bmk^x) \times \GL(\bmk^y).\]
Since regular representations are $\GL(A_r)$-invariant, the set $\cE(x,y) \subseteq \rep(K_r;\bmk^x,\bmk^y)$ is $G_{(x,y)}$-invariant. Since $\GL(A_r)$ acts transitive on bases of $A_r$, we can rephrase the aforementioned results as follows.

\begin{Theorem}(see \cite[Theorem]{Rin16})\label{4.1} The following statements hold.
\begin{enumerate}
    \item We have $\cE_3 = \{(1,1),(2,2)\}$.
    \item The sets $\cE(1,1)$, $\cE(2,2)$ are orbits under the action of $G_{(1,1)}$ and $G_{(2,2)}$ on $\rep(K_3;\bmk,\bmk)$ and $\rep(K_3;\bmk^2,\bmk^2)$, respectively. 
    \item Let $M \in \rep(K_3)$ be a representation with dimension vector $(1,1)$. The representation is elementary if and only if there is $g \in \GL(A_3)$ such that $M^{(g)} \cong (\bmk,\bmk,(\id_{\bmk},0,0))$.
    \item Let $M \in \rep(K_3)$ be a representation with dimension vector $(2,2)$. The representation is elementary if and only if there is $g \in \GL(A_3)$ such that $M^{(g)} \cong (\bmk^2,\bmk^2,(\id_{\bmk^2}, \beta,\gamma))$ with $\beta(a,b) = (0,a)$ and $\gamma(a,b) = (b,0)$ for all $(a,b) \in \bmk^2$.
\end{enumerate}
\end{Theorem}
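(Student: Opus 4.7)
I would establish the four assertions one by one, using the structural results from the preceding sections.

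Assertion (1) is precisely the case $r = 3$ of \cref{3.10}, verified by direct computation in the first of the two examples following that corollary. That computation also yields the identifications $\cE(1,1) = \cB(1,1)$ and $\cE(2,2) = \cB(2,2) \cap \rep_{\esp}(K_3,2)$, which are needed below.

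For the orbit statements in dimension $(1,1)$, a representation is simply a vector $(f_1,f_2,f_3) \in \rep(K_3;\bmk,\bmk) \cong \bmk^3$, and it is elementary iff it is a nonzero brick, i.e.\ $(f_1,f_2,f_3) \neq 0$. Unpacking the definition of the $\GL(A_3)$-action, one sees that it coincides (after identifying $\Hom_\bmk(\bmk,\bmk)^3$ with $A_3^*$) with the contragredient of the standard representation of $\GL_3(\bmk)$, hence is transitive on $\bmk^3 \setminus \{0\}$. Since $(\id_\bmk,0,0) \in \cE(1,1)$, the whole set $\cE(1,1)$ is the $G_{(1,1)}$-orbit of this point, settling (2) and (3) in this case.

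For the orbit statement in dimension $(2,2)$, let $E = (\bmk^2,\bmk^2,(f_1,f_2,f_3)) \in \cE(2,2)$. The equal $2$-socle property of $E$ forces, for each nonzero $v \in E_1$, the three vectors $f_1(v), f_2(v), f_3(v)$ to span $E_2$, since otherwise a two-dimensional $\fv \subset A_3$ would annihilate $v$. Pick $v_1 \in E_1 \setminus \{0\}$. After acting by $\GL(A_3)$ (reordering plus a triangular substitution) we may assume $f_1(v_1), f_2(v_1)$ are a basis $w_1,w_2$ of $E_2$ and $f_3(v_1) = 0$. Complete to a basis $(v_1,v_2)$ of $E_1$, write $f_i(v_2) = a_i w_1 + b_i w_2$, and use the replacement $v_2 \mapsto v_2 - a_1 v_1$ together with a rescaling to bring the triple to the shape $(\id_{\bmk^2},\beta,\gamma)$; applying the equal $2$-socle property at $v_2$ pins down the remaining scalars.

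The main obstacle, and the key step, is to show that $f_1$ can always be arranged to act invertibly on $E_1$. First, the natural map $A_3 \lra \Hom_\bmk(E_1,E_2) \cong \Mat_2(\bmk)$ is injective: any nonzero kernel element $a$ would make $E$ a regular brick of dimension vector $(2,2)$ over the quotient $\bmk K_3/(a) \cong \bmk K_2$, but the tame Kronecker algebra has no regular bricks of this dimension. Hence the image is a three-dimensional subspace of $\Mat_2(\bmk)$, and since the cone over the Segre quadric $\PP^1 \times \PP^1 \hookrightarrow \PP(\Mat_2(\bmk))$ contains no linear subspace of dimension greater than two, this image must contain an invertible matrix. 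Using the $\GL(A_3)$-action to move such an element into the first slot completes the reduction, whereupon a direct matrix calculation shows that the remaining data $(f_2,f_3)$ are $G_{(2,2)}$-equivalent to $(\beta,\gamma)$. This proves (2) and (4) for $(2,2)$.
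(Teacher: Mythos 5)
First, note that the paper offers no proof of this theorem: it is quoted verbatim from Ringel's paper \cite{Rin16}, so you are supplying an argument where the paper supplies only a citation. Judged on its own merits, your treatment of parts (2)--(4) is essentially sound, but part (1) as written is circular. You derive $\cE_3=\{(1,1),(2,2)\}$ from \cref{3.10}, but \cref{3.10} rests on \cref{3.3}, and the proof of \cref{3.3} in its final case ($d=r-1$, which for $r=3$ amounts exactly to excluding the dimension vector $(4,3)\in\cF_3$) concludes by invoking ``$\cE_r=\{(1,1),(2,2)\}$ by \cite{Rin16}'' --- the very statement you are proving. The gap is repairable: the inclusion $\{(1,1),(2,2)\}\subseteq\cE_3$ and the exclusion of all $(x,y)\in\cF_3$ with $2<y\neq 3$ or $y\le 2$, $x+y>4$ come from the non-circular parts of Sections 2--3, and the remaining vector $(4,3)$ can be ruled out by running the argument of \cref{3.6}(1)(i) directly (its hypothesis $y<r$ is not actually used there): $D_{K_3}(E)$ of dimension $(3,4)$ fails $\Delta(1)\ge\min\{2,6\}$, hence admits a non-preprojective subrepresentation of dimension $(1,2)$ whose quotient $(2,2)$ cannot be preinjective. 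But you must say this; as written, (1) assumes what it proves.

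For $(1,1)$ your argument is complete and correct. For $(2,2)$ the two key ideas are right and nontrivial: injectivity of $A_3\to\Mat_2(\bmk)$ (a nonzero kernel element would exhibit $E$ as a brick of dimension $(2,2)$ over the tame Kronecker algebra, and none exist since $\End(R_\lambda[2])\cong\bmk[t]/(t^2)$), and the observation that a $3$-dimensional subspace of $\Mat_2(\bmk)$ cannot lie in the determinant quadric, hence contains an invertible element that $\GL(A_3)$ moves into the first slot. What you leave as ``a direct matrix calculation'' is, however, the computational heart of (2) and (4), and your intermediate normalisation paragraph (choosing $v_1$, etc.) neither completes it nor is needed once $f_1=\id$ is arranged. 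The clean way to finish: the span $W$ of $f_1,f_2,f_3$ is a hyperplane in $\Mat_2(\bmk)$, so $W=\{X:\operatorname{tr}(XN)=0\}$ for some $N\neq 0$ unique up to scalar; the equal $2$-socle property says $Wv=\bmk^2$ for every $v\neq 0$, which is equivalent to $N$ being invertible; $\GL(\bmk^2)\times\GL(\bmk^2)$ acts transitively on invertible $N$, hence on such hyperplanes, and $\GL(A_3)$ then carries any basis of $W$ to the basis $(\id,\beta,\gamma)$ of the model hyperplane $\{\left(\begin{smallmatrix}a&c\\ b&a\end{smallmatrix}\right)\}$. This simultaneously proves that $\cE(2,2)$ is a single $G_{(2,2)}$-orbit and gives the normal form, so the statement is true and your route works --- but the proposal stops short of the step that actually does the work.
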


In following we show that we can not hope for such a nice classification in case $r \geq 4$.

\begin{Lemma}\label{4.2}
Let $\emptyset \neq \cO \subseteq \rep(K_r;\bmk^x,\bmk^y)$ be a non-empty open and $G_{(x,y)}$-invariant subset of $\rep(K_r;\bmk^x,\bmk^y)$ such that $q_{r}(x,y) < -r^2$. Then $\cO/G_{(x,y)}$ is not finite.
\end{Lemma}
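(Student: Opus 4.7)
The plan is a dimension count. The variety $\rep(K_r;\bmk^x,\bmk^y) \cong \Hom_\bmk(\bmk^x,\bmk^y)^r$ is affine, hence irreducible, of dimension $rxy$. As $\cO$ is a non-empty open subset of this irreducible variety, it is itself irreducible of the same dimension $\dim \cO = rxy$.

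I would next record the elementary rewriting of the hypothesis. By definition
\[ q_r(x,y) = x^2 + y^2 - rxy < -r^2 \ \Longleftrightarrow\ rxy > r^2 + x^2 + y^2 = \dim G_{(x,y)}. \]
At the same time, every $G_{(x,y)}$-orbit $G_{(x,y)} \cdot f \subseteq \cO$ is the image of the orbit map $G_{(x,y)} \to \cO$, $g \mapsto g \cdot f$, so it is a constructible subset of dimension $\dim(G_{(x,y)} \cdot f) = \dim G_{(x,y)} - \dim \mathrm{Stab}_{G_{(x,y)}}(f) \leq \dim G_{(x,y)}$. Combining both inequalities, each orbit has dimension strictly smaller than $\dim \cO$.

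Suppose now, for contradiction, that $\cO/G_{(x,y)}$ is finite, say $\cO = \bigcup_{i=1}^{n} G_{(x,y)} \cdot f_i$ for finitely many representatives $f_1,\ldots,f_n \in \cO$. Taking Zariski closures inside $\rep(K_r;\bmk^x,\bmk^y)$ yields
\[ \cO \subseteq \bigcup_{i=1}^{n} \overline{G_{(x,y)} \cdot f_i}, \]
and each $\overline{G_{(x,y)} \cdot f_i}$ is an irreducible closed subset of dimension at most $\dim G_{(x,y)} < rxy$. A finite union of closed sets of dimension $<rxy$ again has dimension $<rxy$, contradicting $\dim \cO = rxy$. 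Hence $\cO/G_{(x,y)}$ must be infinite.

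This is essentially all there is to the argument; no real obstacle arises because the hypothesis $q_r(x,y) < -r^2$ was designed exactly to make $rxy$ strictly exceed $\dim G_{(x,y)}$. The only point to keep in mind is the standard fact that orbits of an algebraic group acting on a variety are constructible of dimension $\dim G - \dim \mathrm{Stab}$, which is what forces each orbit closure to have strictly smaller dimension than the irreducible open set $\cO$.
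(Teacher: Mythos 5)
Your proof is correct and follows essentially the same dimension-count as the paper: compare $\dim G_{(x,y)} = r^2+x^2+y^2$ with $\dim\rep(K_r;\bmk^x,\bmk^y)=rxy$, note the hypothesis $q_r(x,y)<-r^2$ makes every orbit strictly smaller-dimensional, and derive a contradiction from covering the dense open set $\cO$ by finitely many orbit closures. The only cosmetic difference is that the paper uses irreducibility to single out one orbit whose closure is the whole space before comparing dimensions, while you bound all orbit dimensions at once by $\dim G_{(x,y)}$; the substance is identical.
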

\begin{proof} We set $G \coloneqq G_{(x,y)}$. We assume that $\cO/G$ is finite and fix $T_1,\ldots,T_n \in \cO$ such that $\cO = \bigcup^n_{i=1} G.T_i$. Hence
\[\rep(K_r;\bmk^x,\bmk^y) = \overline{\cO} = \bigcup^n_{i=1} \overline{G.T_i}.\]
Because $\rep(K_r;\bmk^x,\bmk^y)$ is irreducible, we find $i \in \{1,\ldots,n\}$ such that $\rep(K_r;\bmk^x,\bmk^y) =  \overline{G.T_i}$.
Since orbits are open in their closure (\cite[8.3]{Hum75}), we conclude with \cite[1.10]{Har77} that
\[ \dim G \geq \dim G.T_i = \dim \overline{G.T_i} = \dim \rep(K_r;\bmk^x,\bmk^y) = r xy.\]
In particular, we have
\begin{align*}
    0 \leq \dim G - r xy &= r^2 + x^2 + y^2 - r xy = r^2 + q_{r}(x,y),
\end{align*} 
in contradiction to the assumption.
\end{proof}

\begin{corollary}\label{4.3}
Let $(x,y) \in \cE_r$ such that $q_{r}(x,y) < -r^2$. Then $\cE(x,y)/G_{(x,y)}$ is not finite.

\end{corollary}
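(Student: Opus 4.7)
The plan is to derive this corollary directly from Lemma 4.2, so the main work is simply to verify that $\cE(x,y)$ satisfies the three hypotheses of that lemma, namely non-emptiness, openness, and $G_{(x,y)}$-invariance.

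Non-emptiness is given: since $(x,y) \in \cE_r$, by definition there exists an elementary representation with dimension vector $(x,y)$, so $\cE(x,y) \neq \emptyset$. For $G_{(x,y)}$-invariance I would note (as already observed in the paragraph preceding Theorem 4.1) that the $\GL(A_r)$-action preserves regularity, hence preserves being elementary, while the $\GL(\bmk^x) \times \GL(\bmk^y)$-action only changes representations by isomorphism and therefore also preserves the elementary property. This invariance is therefore essentially formal.

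The key ingredient is openness, and here I would invoke the explicit description obtained in Theorem 3.8: since $(x,y) \in \cE_r \subseteq \cF_r$ with $\cE(x,y) \neq \emptyset$, we have
\[
\cE(x,y) = \cB(x,y) \cap \rep_{\esp}(K_r, r-y+\lceil x/r \rceil) \cap \rep_{\inj}(K_r, \lceil x/r \rceil -1).
\]
Each of the three intersecting pieces is open in $\rep(K_r;\bmk^x,\bmk^y)$: the brick locus $\cB(x,y)$ is a standard open subvariety (the locus where $\End$ is one-dimensional is open), $\rep_{\esp}(K_r,d) \cap \rep(K_r;\bmk^x,\bmk^y)$ is open by Theorem 2.11(1), and $\rep_{\inj}(K_r,d) \cap \rep(K_r;\bmk^x,\bmk^y)$ is open by Corollary 2.14. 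Therefore $\cE(x,y)$ is open.

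With all three hypotheses verified, applying Lemma 4.2 to the subset $\cO \coloneqq \cE(x,y)$ (using the assumption $q_r(x,y) < -r^2$) yields that $\cE(x,y)/G_{(x,y)}$ is infinite, which completes the proof. I do not expect any serious obstacle here: the substance of the argument lives in Lemma 4.2 (the dimension count on $G_{(x,y)}$-orbits versus the irreducibility of the representation variety) and in Theorem 3.8 (the open description of $\cE(x,y)$); the corollary is a direct combination of the two.
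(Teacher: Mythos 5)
Your proposal is correct and follows essentially the same route as the paper: both obtain $\cE(x,y)$ as the non-empty open $G_{(x,y)}$-invariant set $\cB(x,y) \cap \rep_{\esp}(K_r,r-y+\lceil \frac{x}{r}\rceil) \cap \rep_{\inj}(K_r,\lceil \frac{x}{r}\rceil -1)$ via Theorem 3.8 and then apply Lemma 4.2. Your extra spelling-out of why each of the three intersecting loci is open (brick locus, Theorem 2.11(1), Corollary 2.14) is just making explicit what the paper delegates to Theorem 3.8.
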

\begin{proof}
Since $(x,y) \in \cE_r$, we can apply \cref{3.8} and conclude that 
\[ \cE(x,y) =   \cB{(x,y)} \cap \rep_{\esp}(K_r,r-y+\lceil \frac{x}{r} \rceil) \cap \rep_{\inj}(K_r,\lceil \frac{x}{r} \rceil-1) \]
is a non-empty open subset of $\rep(K_r;\bmk^x,\bmk^y)$. Moreover, $\cE(x,y)$ is $G_{(x,y)}$-invariant, since regular representations are $\GL(A_r)$-invariant. Now we apply \cref{4.2}.
\end{proof}

\begin{Theorem}\label{4.4}
Let $r \geq 4$. Then there are infinitely pairwise non-isomorphic elementary representations with dimension vector $(r+2,3)$ that all are in different $\GL(A_r)$-orbits.
\end{Theorem}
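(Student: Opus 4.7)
The plan is to reduce the statement directly to Corollary 4.3 applied to $(x,y) = (r+2,3)$, so the bulk of the work is a verification that this pair lies in $\cE_r$ together with a short computation of $q_r(r+2,3)$.

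First, I would check that $(r+2,3) \in \cF_r$ for $r \geq 4$: the inequality $\frac{2(r+2)}{r} \leq 3$ is equivalent to $4 \leq r$, and $3 \leq r+2$ is automatic. Next, since $r \geq 4$ we have $y = 3 < r$, and $\lceil \frac{r+2}{r} \rceil = 2$, so the inequalities in \cref{3.8}(2)(ii) read
\[ 1 \cdot (3 + r - 1) \leq r+2 \leq 2 (r - 3 + 2), \]
i.e.\ $r+2 \leq r+2$ and $r+2 \leq 2r-2$. Both hold for $r\geq 4$, so $\cE(r+2,3) \neq \emptyset$ by \cref{3.8}, i.e.\ $(r+2,3) \in \cE_r$.

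Second, I would compute
\[ q_r(r+2,3) = (r+2)^2 + 9 - 3r(r+2) = -2r^2 - 2r + 13, \]
and note that $q_r(r+2,3) < -r^2$ is equivalent to $r^2 + 2r - 13 > 0$, which holds for all $r \geq 4$ (the left side equals $11$ at $r=4$ and is strictly increasing). With these two facts, \cref{4.3} applies and yields that $\cE(r+2,3) / G_{(r+2,3)}$ is not finite.

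Finally, I would translate this statement into the claim about $\GL(A_r)$-orbits: as recorded right before \cref{4.1}, for the $G_{(r+2,3)}$-invariant set $\cE(r+2,3)$ there is a canonical bijection between $\cE(r+2,3)/G_{(r+2,3)}$ and $[\cE(r+2,3)]/\GL(A_r)$. Hence there are infinitely many $\GL(A_r)$-orbits of isomorphism classes of elementary representations with dimension vector $(r+2,3)$; picking one representative from each orbit yields the desired infinite family of pairwise non-isomorphic elementary representations lying in distinct $\GL(A_r)$-orbits. I do not expect any real obstacle here; the only place to be careful is the verification of $(r+2,3) \in \cE_r$ via \cref{3.8}, since the upper bound is sharp exactly when $r = 4$.
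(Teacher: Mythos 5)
Your proposal is correct and follows essentially the same route as the paper: verify $(r+2,3)\in\cE_r$ via the numerical criterion of \cref{3.8}/\cref{3.10} (with $\lceil\frac{r+2}{r}\rceil=2$, the bound $r+2\le 2(r-1)$ being exactly where $r\ge 4$ enters), compute $q_r(r+2,3)=-2r^2-2r+13<-r^2$, and apply \cref{4.3}. Your extra step spelling out the passage from $G_{(r+2,3)}$-orbits to $\GL(A_r)$-orbits of isomorphism classes is exactly the correspondence the paper records before \cref{4.1}, so nothing is missing.
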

\begin{proof} We set $x = r + 2$ and $y = 3 < r$. Then $(x,y) \in \cF_r$ and
\[ \lfloor \frac{x}{r} \rfloor(y+r-\lfloor \frac{x}{r} \rfloor) = r-2 \leq x \leq 2(r-1) = \lceil \frac{x}{r} \rceil (r-y+\lceil \frac{x}{r} \rceil).\]
Now \cref{3.10} implies that $(r+2,3) \in \cE_r$. Moreover, we have $q_{K_r}(r+2,3) = - 2r^2 - 2r + 13 < -r^2$ and can apply \cref{4.3}
\end{proof}

\begin{corollary}\label{4.5}
Let $(x,y) \in \NN^2$ such that $q_{r}(x,y) <  -r^2$. The number of different $\GL(A_r)$-orbits of isomorphism-classes of elementary representation with dimension vector $(x,y)$ is either $0$ or infinite.
\end{corollary}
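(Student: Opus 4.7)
The plan is to reduce the statement to the case $(x,y) \in \cF_r$, where Corollary 4.3 applies directly. If $\cE(x,y) = \emptyset$ there are zero orbits and we are done, so I would assume $\cE(x,y) \neq \emptyset$. Since every elementary representation is regular and $q_r(x,y) < -r^2 < 1$, the vector $(x,y)$ lies in the set $\cR_r$ of regular dimension vectors. Using that $\cF_r$ is a fundamental domain for the action of $G_r = \langle \sigma_r, \delta \rangle$ on $\cR_r$, I would pick $g \in G_r$ with $(x',y') := g(x,y) \in \cF_r$; $G_r$-invariance of the Tits form gives $q_r(x',y') = q_r(x,y) < -r^2$.

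The next step is to show that the number of $\GL(A_r)$-orbits of isomorphism classes of elementary representations is invariant under the $G_r$-action on dimension vectors. The functors $\sigma_{K_r}$, $\sigma_{K_r}^{-1}$ and $D_{K_r}$ preserve elementary representations (since they restrict to equivalences on the appropriate full subcategories and send regular representations to regular representations) and induce bijections on the corresponding sets of isomorphism classes. The compatibility with the $\GL(A_r)$-action required to lift these bijections to $\GL(A_r)$-orbits comes down, for $D_{K_r}$, to the direct identity $D_{K_r}(M^{(h)}) = D_{K_r}(M)^{(h)}$, and for $\sigma_{K_r}$, to the observation that $f_{M^{(h)}} = f_M \circ T_h$, where $T_h \in \GL(M_1^r)$ is induced by the action of $h$ on $A_r$; this yields an isomorphism $\sigma_{K_r}(M^{(h)}) \cong \sigma_{K_r}(M)^{(\widetilde{h})}$ for an explicit $\widetilde{h} \in \GL(A_r)$ depending functorially on $h$.

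With this reduction we may assume $(x,y) \in \cF_r$. Then $\cE(x,y) \neq \emptyset$ forces $(x,y) \in \cE_r$, and \cref{4.3} applies directly to yield that $\cE(x,y)/G_{(x,y)}$ is infinite. The one-to-one correspondence between $\cE(x,y)/G_{(x,y)}$ and $[\cE(x,y)]/\GL(A_r)$ recorded just before \cref{4.1} then produces infinitely many $\GL(A_r)$-orbits of isomorphism classes of elementary representations with dimension vector $(x,y)$.

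The main obstacle I expect is the bookkeeping needed for the $\GL(A_r)$-equivariance of $\sigma_{K_r}$: one must unpack the explicit formula $\sigma_{K_r}(M) = (\ker f_M, M_1, (\pi_i|_{\ker f_M})_i)$ and track how $h \in \GL(A_r)$ acting on $A_r$ translates into the corresponding $\widetilde{h}$ acting on the shifted representation. This is essentially a careful change-of-basis computation, but it is the key technical input making the reduction to $\cF_r$ legitimate.
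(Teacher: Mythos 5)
Your proposal is correct and follows essentially the same route as the paper: reduce to a dimension vector in $\cF_r$ (hence in $\cE_r$) by applying $D_{K_r}$ and powers of $\sigma_{K_r}$, use invariance of the Tits form, and then invoke \cref{4.3} together with the correspondence between $G_{(x,y)}$-orbits and $\GL(A_r)$-orbits of isomorphism classes. The only difference is that you propose to verify the $\GL(A_r)$-equivariance of $\sigma_{K_r}$ and $D_{K_r}$ by an explicit computation, whereas the paper simply cites \cite[6.1.3]{BF24} for this fact.
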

\begin{proof}
We can assume that there is $E \in \rep(K_r)$ elementary with dimension vector $(x,y)$. By applying $D_{K_r}$ and powers of $\sigma_{K_r}$ to $E$ we find an elementary representation $F$ with dimension vector $\dimu F \in \cE_r$. Since $\sigma_{K_r}$ and $D_{K_r}$ do not change the quadratic form, we have
\[ q_{r}(\dimu F) =  q_{r}(x,y) < -r^2 .\]
Now \cref{4.3} implies that we get infinitely many orbits. Since $D_{K_r}$ and $\sigma_r$ respect $\GL(A_r)$-orbits (see for example \cite[6.1.3]{BF24}), the statements follows.
\end{proof}

\begin{Remark}
Let $E \in \rep(K_3)$ be elementary. Then \cite{Rin16} implies that $q_3(\dimu E) \in \{-1,-4\}$. Hence $q_3(\dimu E) \geq -9 = -r^2$.
\end{Remark}

\section{Examples}

The following figures illustrate our findings for $r \in \{3,4,5,6,7\}$. The dashed red line is the restriction $y \leq r - 1$. We would like to remark that simulations for $ 5\leq r \leq 500$ indicate that a sharp upper bound for $y$ is $\lceil \frac{r}{2} \rceil$.\\

\begin{minipage}[t]{0.3\textwidth}
\centering
\begin{tikzpicture}[scale=.45]
   \tkzInit[xmax=7.5,ymax=7.5,xmin=0,ymin=0]
   \tkzGrid
   \tkzAxeXY
   \node[] at (1,1) {$\bullet$};
   \node[] at (2,2) {$\bullet$};
   \node[color=black]  at (6,5) {$\cF_3$};
   \draw[thick] (0,0) -- (7.5,7.5) node[anchor=south west] {}; % two points for drawing 2x+y=2
   \draw[thick] (0,0) -- (7.5,15/3) node[anchor=south west] {}; % two points for drawing 2x+y=2
   \tkzText[above](0,6.75){}
   \begin{scope}[on background layer]
   \draw[fill = gray!20] (7.5,7.5) node[anchor=north]{}
    -- (0,0) node[anchor=north]{}
    -- (7.5,15/3) node[anchor=north]{};
   \draw[thick, dashed, red] (0,2) -- (7.5,2) node[anchor=south west] {};
   \end{scope}
  \end{tikzpicture}
\end{minipage}
\begin{minipage}[t]{0.3\textwidth}
\begin{tikzpicture}[scale=.45]
   \tkzInit[xmax=9.5,ymax=7.5,xmin=0,ymin=0]
   \tkzGrid
   \tkzAxeXY
   \begin{scope}[on background layer]
   \draw[fill = gray!20,draw=none] (7.5,7.5) node[anchor=north]{}
    -- (0,0) node[anchor=north]{}
    -- (9.5,9.5/2) node[anchor=north]{}
    -- (9.5,7.5) node[anchor=north]{}
    (7.5,7.5) node[anchor=north]{};
    \end{scope}
    \draw[ thick] (0,0) -- (7.5,7.5) node[anchor=south west] {}; % two points for drawing 2x+y=2
    \draw[ thick] (0,0) -- (9.5,9.5/2) node[anchor=south west] {}; % two points for drawing 2x+y=2
    \draw[thick, dashed, red] (0,3) -- (9,3) node[anchor=south west] {};
    \node[color=black]  at (6,5) {$\cF_4$};
    \node[] at (1,1) {$\bullet$};
    \node[] at (2,1) {$\bullet$};
    \node[] at (2,2) {$\bullet$};
    \node[] at (3,2) {$\bullet$};
    \node[] at (6,3) {$\bullet$};
    \tkzText[above](0,6.75){}
\end{tikzpicture}
\end{minipage}
\begin{minipage}[t]{0.35\textwidth}
\begin{tikzpicture}[scale=.45]
   \tkzInit[xmax=11.5,ymax=7.5,xmin=0,ymin=0]
   \tkzGrid
   \tkzAxeXY
   \node[color=black]  at (6,5) {$\cF_5$};
   \begin{scope}[on background layer]
   \draw[fill = gray!20,draw=none] (7.5,7.5) node[anchor=north]{}
    -- (0,0) node[anchor=north]{}
    -- (11.5,11.5*2/5) node[anchor=north]{}
    -- (11.5,7.5) node[anchor=north]{}
    (7.5,7.5) node[anchor=north]{};
   \end{scope}
   \draw[ thick] (0,0) -- (7.5,7.5) node[anchor=south west] {}; % two points for drawing 2x+y=2
   \draw[ thick] (0,0) -- (11.5,11.5*2/5) node[anchor=south west] {}; % two points for drawing 2x+y=
   \draw[thick, dashed, red] (0,4) -- (11,4) node[anchor=south west] {};
   \node[] at (1,1) {$\bullet$};
   \node[] at (2,1) {$\bullet$};
   \node[] at (2,2) {$\bullet$};
   \node[] at (3,2) {$\bullet$};
   \node[] at (4,2) {$\bullet$};
   \node[] at (3,3) {$\bullet$};
   \node[] at (7,3) {$\bullet$};
    \tkzText[above](0,6.75){}
  \end{tikzpicture}
\end{minipage}

\vspace{0.5cm}

\begin{center}
\begin{tikzpicture}[scale=.5]
    \tkzInit[xmax=15.5,ymax=7.5,xmin=0,ymin=0]
    \tkzGrid
    \tkzAxeXY
    \node[color=black]  at (6,4) {$\cF_6$};
    \begin{scope}[on background layer]
        \draw[fill = gray!20,draw=none] (7.5,7.5) node[anchor=north]{}
        -- (0,0) node[anchor=north]{}
        -- (15.5,15.5*2/6) node[anchor=north]{}
        -- (15.5,7.5) node[anchor=north]{}
        (7.5,7.5) node[anchor=north]{};
   \end{scope}
   \draw[ thick] (0,0) -- (7.5,7.5) node[anchor=south west] {}; % two points for drawing 2x+y=2
   \draw[ thick] (0,0) -- (15.5,15.5*2/6) node[anchor=south west] {}; % two points for drawing 2x+y=2
   \draw[thick, dashed, red] (0,5) -- (15,5) node[anchor=south west] {};
   \node[] at (1,1) {$\bullet$};
   \node[] at (2,1) {$\bullet$};
   \node[] at (3,1) {$\bullet$};
   \node[] at (2,2) {$\bullet$};
   \node[] at (3,2) {$\bullet$};
   \node[] at (4,2) {$\bullet$};
   \node[] at (5,2) {$\bullet$};
   \node[] at (3,3) {$\bullet$};
   \node[] at (4,3) {$\bullet$};
   \node[] at (8,3) {$\bullet$};
   \node[] at (9,3) {$\bullet$};
   %\node[] at (8,4) {$\bullet$};  
   \tkzText[above](0,6.75){}
\end{tikzpicture}

\vspace{0.5cm}

\begin{tikzpicture}[scale=.5]
   \tkzInit[xmax=24.5,ymax=7,xmin=0,ymin=0]
   \tkzGrid
   \tkzAxeXY
    \node[color=black]  at (6,5) {$\cF_7$};
  \begin{scope}[on background layer]
    \draw[fill = gray!20,draw=none] (7,7) node[anchor=north]{}
  -- (0,0) node[anchor=north]{}
  -- (24.5,24.5*2/7) node[anchor=north]{}
  -- (7,7) node[anchor=north]{};
 \end{scope}
   
    \draw[ thick] (0,0) -- (7,7) node[anchor=south west] {}; % two points for drawing 2x+y=2
    \draw[ thick] (0,0) -- (24.5,24.5*2/7) node[anchor=south west] {}; % two points for drawing 2x+y=2
   \draw[thick, dashed, red] (0,6) -- (21,6) node[anchor=south west] {};
   \node[] at (1,1) {$\bullet$};
   \node[] at (2,1) {$\bullet$};
   \node[] at (3,1) {$\bullet$};
   \node[] at (2,2) {$\bullet$};
   \node[] at (3,2) {$\bullet$};
   \node[] at (4,2) {$\bullet$};
   \node[] at (5,2) {$\bullet$};
   \node[] at (6,2) {$\bullet$};
   \node[] at (3,3) {$\bullet$};
   \node[] at (4,3) {$\bullet$};
   \node[] at (5,3) {$\bullet$};
   \node[] at (9,3) {$\bullet$};
   \node[] at (10,3) {$\bullet$};
   \node[] at (4,4) {$\bullet$};
   \node[] at (10,4) {$\bullet$};
   \tkzText[above](0,6.75){}
\end{tikzpicture}
\end{center}

\thispagestyle{empty}
\bibliographystyle{alpha}
\bibliography{Bibtex}

\end{document}